\newif \ifSIAM
\ifSIAM\documentclass[final]{siamltex} \else
\numberwithin{equation}{section}
\newif \ifbbm
\newif \iffract
\newif \ifPDF
\newif \ifwhere
\newcommand{\be}{\begin{equation}}
\newcommand{\ee}{\end{equation}}
\newcommand{\bea}{\begin{eqnarray}}
\newcommand{\eea}{\end{eqnarray}}
\newcommand{\bean}{\begin{eqnarray*}}
\newcommand{\eean}{\end{eqnarray*}}
\def\ba#1\ea{\begin{align}#1\end{align}}
\def\ban#1\ean{\begin{align*}#1\end{align*}}
\def\bat#1\eat{\begin{alignat}#1\end{alignat}}
\def\batn#1\eatn{\begin{alignat*}#1\end{alignat*}}
\def\bs#1\es{\begin{split}#1\end{split}}
\newcommand{\bse}{\begin{subequations}}
\newcommand{\ese}{\end{subequations}}
\newcommand{\bt}{\begin{theorem}}
\newcommand{\et}{\end{theorem}}
\newcommand{\bl}{\begin{lemma}}
\newcommand{\el}{\end{lemma}}
\newcommand{\bc}{\begin{corollary}}
\newcommand{\ec}{\end{corollary}}
\newcommand{\bcr}{\begin{rcorollary}}
\newcommand{\ecr}{\end{rcorollary}}
\newcommand{\bp}{\begin{proof}}
\newcommand{\ep}{\end{proof}}
\newcommand{\bd}{\begin{definition}}
\newcommand{\ed}{\end{definition}}
\newcommand{\br}{\begin{remark}}
\newcommand{\er}{\end{remark}}
\newcommand{\bas}{\begin{assumption}}
\newcommand{\eas}{\end{assumption}}
\newcommand{\bex}{\begin{example}}
\newcommand{\eex}{\end{example}}
\newcommand{\bqo}{\begin{quote}}
\newcommand{\eqo}{\end{quote}}
\newcommand{\bdc}{\begin{description}}
\newcommand{\edc}{\end{description}}
\newcommand{\bi}{\begin{itemize}}
\newcommand{\ei}{\end{itemize}}
\newcommand{\ben}{\begin{enumerate}}
\newcommand{\een}{\end{enumerate}}
\newcommand\Om{\Omega}
\newcommand\oma{{\omega_\ver}}
\newcommand\omab{{\omega_\ver^{\mathrm{ext}}}}
\newcommand\bomab{{\overline\omega_\ver^{\mathrm{ext}}}}
\newcommand\omaD{{\omega_\ver^{\mathrm D}}}
\newcommand\omaN{{\omega_\ver^{\mathrm N}}}
\newcommand\omabis{{\omega_\verbis}}
\newcommand\toma{{\widetilde\omega_\ver}}
\newcommand\homa{{\widehat\omega_\ver}}
\newcommand\boma{{\overline\omega_\ver}}
\newcommand\GD{\Gamma_{\mathrm D}}
\newcommand\GN{\Gamma_{\mathrm N}}
\newcommand\Gr{\nabla}
\newcommand\Grb{\nabla_{\!\mathcal{T}}}
\newcommand\Grbt{\nabla_{\!\widetilde{\mathcal{T}}}}
\newcommand\Grbh{\nabla_{\!\widehat{\mathcal{T}}}}
\newcommand\Grd{\nabla_{\mathrm{d}}}
\newcommand\Dv{\nabla {\cdot}}
\newcommand\Dvb{\nabla_{\!\mathcal{T}} {\cdot}}
\newcommand\Crl{\nabla {\times}}
\newcommand\dv{\mathrm{div}}
\newcommand\crl{\mathrm{curl}}
\newcommand\scp{{\cdot}}
\newcommand\Lap{\Delta}
\newcommand\Lapb{\Delta_{\mathcal{T}}}
\newcommand{\jump}[1]{[\![#1]\!]}
\DeclarePairedDelimiter\norm{\|}{\|}
\newcommand\Ho{H^1(\Om)}
\newcommand\Hoi[1]{H^1(#1)}
\newcommand\Hooi[1]{H^1_0(#1)}
\newcommand\Hsa{H^1_*(\oma)}
\newcommand\HTa{H^1(\Ta)}
\newcommand\Lti[1]{L^2(#1)}
\newcommand\tLti[1]{\tL^2(#1)}
\newcommand\Hdv{\tH(\dv,\Om)}
\newcommand\Hdvi[1]{\tH(\dv,#1)}
\newcommand\Hdvs{\tH_0(\dv,\oma)}
\newcommand\HdvTa{\tH(\dv,\Ta)}
\newcommand\ie{i.e.}
\newcommand\cf{cf.}
\newcommand\eal{{\em et al.}}
\newcommand\eq{:=}
\newcommand\ds{\displaystyle}
\newcommand\nn{\nonumber}
\newcommand\pt{\partial}
\newcommand{\elm}{{K}}
\newcommand{\telm}{{\widetilde K}}
\newcommand{\helm}{{\widehat K}}
\newcommand{\sd}{{F}}
\newcommand{\tsd}{{\widetilde F}}
\newcommand{\hsd}{{\widehat F}}
\newcommand{\bsd}{{\overline F}}
\newcommand{\ver}{{\ta}}
\newcommand{\verbis}{{\mathbf a}}
\newcommand\Fh{\mathcal{F}_h}
\newcommand\Fhint{\Fh^{\mathrm{int}}}
\newcommand\FN{\Fh^{\mathrm{N}}}
\newcommand\FD{\Fh^{\mathrm{D}}}
\newcommand\FK{\mathcal{F}_\elm}
\newcommand\hFK{\widehat{\mathcal{F}}_{\helm}}
\newcommand\tFK{\widetilde{\mathcal{F}}_{\telm}}
\newcommand\Fa{\mathcal{F}_{\ver}}
\newcommand\tFa{\widetilde{\mathcal{F}}_{\ver}}
\newcommand\hFa{\widehat{\mathcal{F}}_{\ver}}
\newcommand\bFa{\overline{\mathcal{F}}_{\ver}}
\newcommand\tFaN{\tFa^{\mathrm{N}}}
\newcommand\Fain{{\Fa^{\mathrm{int}}}}
\newcommand\FKin{{\FK^{\mathrm{int}}}}
\newcommand\FKiin{{\F_{\elm_i}^{\mathrm{int}}}}
\newcommand\hFain{{\hFa^{\mathrm{int}}}}
\newcommand\tFain{{\tFa^{\mathrm{int}}}}
\newcommand\bFain{{\bFa^{\mathrm{int}}}}
\newcommand\Fab{{\Fa^{\mathrm{ext}}}}
\newcommand\hFab{{\hFa^{\mathrm{ext}}}}
\newcommand\bFab{{\bFa^{\mathrm{ext}}}}
\newcommand\tFab{{\tFa^{\mathrm{ext}}}}
\newcommand\FaN{\Fa^{\mathrm{N}}}
\newcommand\FaD{\Fa^{\mathrm{D}}}
\newcommand\tFaD{\tFa^{\mathrm{D}}}
\newcommand\bFaD{\bFa^{\mathrm{D}}}
\newcommand\FKN{\FK^{\mathrm{N}}}
\newcommand\FKD{\FK^{\mathrm{D}}}
\newcommand\Fe{{\F_{\!\edg}}}
\newcommand\hFe{{\widehat{\F}_{\!\edg}}}
\newcommand\bFe{{\overline{\F}_{\!\edg}}}
\newcommand{\edg}{{e}}
\newcommand{\hedg}{{\widehat e}}
\newcommand{\bedg}{{\overline e}}
\newcommand\Ea{\mathcal{E}_{\ver}}
\newcommand\hEa{\widehat{\mathcal{E}}_{\ver}}
\newcommand\bEa{\overline{\mathcal{E}}_{\ver}}
\newcommand\Th{\mathcal{T}_h}
\newcommand\Ta{{\mathcal{T}_{\ver}}}
\newcommand\tTa{{\widetilde{\mathcal{T}}_{\ver}}}
\newcommand\hTa{{\widehat{\mathcal{T}}_{\ver}}}
\newcommand\bTa{{\overline{\mathcal{T}}_{\ver}}}
\newcommand\Te{{\mathcal{T}_{\edg}}}
\newcommand\Vh{\mathcal{V}_h}
\newcommand\Vhint{\mathcal{V}^{\mathrm{int}}_h}
\newcommand\Vhext{\mathcal{V}^{\mathrm{ext}}_h}
\newcommand\VK{\mathcal{V}_\elm}
\newcommand\uu{u}
\newcommand\uh{u_h}
\newcommand\uuD{\uu_{\mathrm D}}
\newcommand\uuN{\sigma_{\mathrm N}}
\newcommand\pr{s}
\newcommand\pra{s_p} 
\newcommand\prh{s_h}
\newcommand\fr{{\bm \sigma}}
\newcommand\fra{{\bm \sigma}_p} 
\newcommand\frh{{\bm \sigma}_h}
\newcommand\ta{{\bm a}}
\newcommand\tn{{\bm n}}
\newcommand\tr{{\bm r}}
\newcommand\tv{{\bm v}}
\newcommand\tw{{\bm w}}
\newcommand\tx{{\bm x}}
\newcommand\ty{{\bm y}}
\newcommand\tz{{\bm z}}
\newcommand\tA{{\bm A}}
\newcommand\tH{{\bm H}}
\newcommand\tJ{{\bm J}}
\newcommand\tL{{\bm L}}
\newcommand\tS{{\bm S}}
\newcommand\tT{{\bm T}}
\newcommand\tV{{\bm V}}
\newcommand{\bxi}{{\bm\xi}}
\newcommand\btau{{\bm \tau}}
\newcommand\F{\mathcal{F}}
\newcommand\T{\mathcal{T}}
\newcommand\V{\mathcal{V}}
\newcommand\RR{{\mathbb R}}
\newcommand\PP{{\mathbb P}}
\newcommand\RTN{\bm{RTN}}
\newcommand\ft{{\frac 1 2}}
\newcommand\mft{{-\frac 1 2}}
\newcommand\ft{{1/2}}
\newcommand\mft{{-1/2}}
\newcommand\dx{\, \mathrm{d} \tx}
\newcommand\dy{\, \mathrm{d} \ty}
\newcommand{\upb}{^\mathrm{ext}}
\newcommand{\argmin}{\mathop{\mbox{\rm arg~min}}}
\newcommand{\wC}{\widehat C}
\newcommand{\wK}{\widehat\elm}
\newcommand{\wF}{\widehat\sd}
\renewcommand{\wr}{\widehat r}
\newcommand{\wphi}{\widehat\phi}
\newcommand{\wtphi}{\widetilde\phi}
\newcommand{\Jac}{{\bm J}}
\newcommand{\piola}{{\bm \psi}}
\newcommand{\bzeta}{{\bm \zeta}}
\renewcommand{\det}{\operatorname{det}}
\newcommand\FwKD{\F_{\wK}^{\mathrm{D}}}
\newcommand\FwKN{\F_{\wK}^{\mathrm{N}}}
\newcommand\FwK{\F_{\wK}}
\newcommand{\wbxi}{\widehat\bxi}
\newcommand{\calL}{\mathcal{L}}
\newtheorem{remark}[theorem]{Remark}
\newtheorem{assumption}[theorem]{Assumption}
\newtheorem{lemma}{Lemma}[section]
\newtheorem{corollary}[lemma]{Corollary}
\newtheorem{assumption}[lemma]{Assumption}
\newtheorem{theorem}[lemma]{Theorem}
\newtheorem{definition}[lemma]{Definition}
\newtheorem{remark}[lemma]{Remark}
\newtheorem{example}[lemma]{Example}
\newif \ifPDF
\newif \ifPdfTex
\ifPdfTex \usepackage[breaklinks,bookmarks=false]{hyperref}
\ifPDF \usepackage[dvips,breaklinks,bookmarks=true]{hyperref} \fi \fi
\title{Stable broken $H^1$ and $\tH(\dv)$ polynomial extensions for polynomial-degree-robust potential and flux reconstruction in three space dimensions\thanks{This project has received funding from the European Research Council (ERC)
under the European Union’s Horizon 2020 research and innovation program (grant agreement No 647134 GATIPOR).}}
\author{Alexandre Ern\footnotemark[2] \footnotemark[3]
\and Martin Vohral\'ik\footnotemark[3] \footnotemark[2]}
\begin{document}
\maketitle

\renewcommand{\thefootnote}{\fnsymbol{footnote}}

\footnotetext[2]{Universit\'e Paris-Est, CERMICS (ENPC), 77455
Marne-la-Vall\'ee 2, France
(\ifPDF\href{mailto:alexandre.ern@enpc.fr}{\texttt{alexandre.ern@enpc.fr}}\else{\texttt{alexandre.ern@enpc.fr}}\fi).}
\footnotetext[3]{Inria, 2 rue Simone Iff, 75589 Paris, France
(\ifPDF\href{mailto:martin.vohralik@inria.fr}{\texttt{martin.vohralik@inria.fr}}\else{\texttt{martin.vohralik@inria.fr}}\fi).}

\renewcommand{\thefootnote}{\arabic{footnote}}

\begin{abstract}
\noindent We study extensions of piecewise polynomial data prescribed on faces and possibly in elements of a patch of simplices sharing a vertex. In the $H^1$ setting, we look for functions whose jumps across the faces are prescribed, whereas in the $\tH(\dv)$ setting, the normal component jumps and the piecewise divergence are prescribed. We show stability in the sense that the minimizers over piecewise polynomial spaces of the same degree as the data are subordinate in the broken energy norm to the minimizers over the whole broken $H^1$ and $\tH(\dv)$ spaces. Our proofs are constructive and yield constants independent of the polynomial degree. One particular application of these results is in a posteriori error analysis, where the present results justify polynomial-degree-robust efficiency of potential and flux reconstructions.
\end{abstract}

\bigskip

\ifSIAM \begin{keywords} \else \noindent{\bf Key words:} \fi
polynomial extension operator, broken Sobolev space, potential reconstruction, flux reconstruction, a posteriori error estimate, robustness, polynomial degree, best approximation, patch of elements

\ifSIAM \end{keywords}\fi

\ifSIAM

\pagestyle{myheadings} \thispagestyle{plain} \markboth{A. ERN AND M.
VOHRAL\'IK}{3D}

\else


\fi

\section{Introduction}

Braess~\eal\ \cite[Theorem~1]{Brae_Pill_Sch_p_rob_09} showed that equilibrated flux a posteriori error estimates lead to local efficiency and polynomial-degree robustness (in short, $p$-robustness). This means that the estimators upper-bounding the error also give local lower bounds for the error, up to a generic constant independent of the polynomial degree of the approximate solution. These results apply to conforming finite element methods in two space dimensions. They are based on flux reconstructions obtained by solving, via the mixed finite element method, a homogeneous Neumann, hat-function-weighted residual problem on each vertex-centered element patch of the mesh. The proof of the $p$-robustness in \cite{Brae_Pill_Sch_p_rob_09} relies on two key components: $p$-robust stability of the right inverse of the divergence operator shown in Costabel and McIntosh~\cite[Corollary~3.4]{Cost_McInt_Bog_Poinc_10} and $p$-robust stability of the right inverse of the normal trace shown in Demkowicz~\eal\ \cite[Theorem~7.1]{Demk_Gop_Sch_ext_III_12}. In our contribution~\cite[Theorem~3.17]{Ern_Voh_p_rob_15}, we extended $p$-robustness of a posteriori error estimates to any numerical scheme satisfying a couple of clearly identified assumptions, including nonconforming, discontinuous Galerkin, and mixed finite elements, still in two space dimensions, while proceeding through similar stability arguments. A second type of local problem appears here, where one is led to solve a homogeneous Dirichlet, conforming finite element problem on each vertex-centered element patch, with a hat-function-weighted discontinuous datum, yielding a potential reconstruction.

The present work extends the results of~\cite{Brae_Pill_Sch_p_rob_09} on flux reconstruction to three space dimensions and reformulates the methodology of~\cite{Ern_Voh_p_rob_15} for potential reconstruction so that it can be applied in the same way in two and three space dimensions. In doing so, we adopt a different viewpoint leading to a larger abstract setting not necessarily linked to a posteriori error analysis. The two main results of this paper are Theorems~\ref{thm_H1_ext} and~\ref{thm_Hdv_ext}. They concern a setting where one considers a shape-regular patch of simplicial mesh elements sharing a given vertex, say $\ver$, together with a $p$-degree polynomial $r_\sd$ associated with each interior face $\sd$ of the patch ($H^1$ potential reconstruction setting) or $p$-degree polynomials $r_\sd$ and $r_\elm$ associated with each interior and boundary face $\sd$ and element $\elm$ of the patch respectively ($\tH(\dv)$ flux reconstruction setting). These data, satisfying appropriate compatibility conditions, are to be extended to functions defined over the patch, such that the jumps across the interior faces of the patch are prescribed by $r_\sd$ ($H^1$ setting) or such that the normal component jumps and boundary values are prescribed by $r_\sd$ and the piecewise divergence is prescribed by $r_\elm$ ($\tH(\dv)$ setting). Crucially, we prove that the extension into piecewise polynomials of degree $p$ that minimizes the broken energy norm is, up to a constant only depending on the patch shape regularity, as good as the extension into the whole broken $H^1$ space with the same jump constraints. Similarly, our broken $p$-degree Raviart--Thomas--N\'ed\'elec extension is stable with respect to the broken $\tH(\dv)$ one.

Section~\ref{sec_eq_ref} reformulates equivalently the above theorems as Corollaries~\ref{cor_H1_ext} and~\ref{cor_Hdv_ext} to show that best-approximation of trace-discontinuous or normal-trace discontinuous piecewise polynomial data by $\Hooi{\oma}$- or $\Hdvs$-conforming piecewise polynomials (\ie, by trace-continuous or normal-trace continuous piecewise polynomials on the open set $\oma$ composed of the elements in the patch sharing the given vertex $\ta$) is, up to a $p$-independent constant, as good as by all $\Hooi{\oma}$ or $\Hdvs$ Sobolev functions. This section also sheds more light on the continuous level, uncovering that three different equivalent formulations of our results can be devised using the equivalence principle of primal and dual energies. This, in particular, allows us to make a link with the previously obtained results in~\cite{Brae_Pill_Sch_p_rob_09, Ern_Voh_p_rob_15} and to describe the application of our results to a posteriori error analysis in Section~\ref{sec_a_post_appl}. In particular, a guaranteed error upper bound for a generic numerical approximation of the Laplace equation is recalled in Corollary~\ref{cor_rel} and $p$-robust local efficiency is stated in Corollary~\ref{cor_eff}, with potential reconstructions treated in formula~\eqref{eq_eff_H1} and flux reconstructions in formula~\eqref{eq_eff_Hdv}.

The proofs of Theorems~\ref{thm_H1_ext} and~\ref{thm_Hdv_ext} are respectively presented in Sections~\ref{sec:proof_pot} and~\ref{sec:proof_flux}. In contrast to~\cite{Brae_Pill_Sch_p_rob_09}, where the work with dual norms was essential, we design here a procedure only working in the (broken) energy norms. The proofs are constructive and therefore indicate a possible practical reconstruction of the potential and the flux which avoid the patchwise problem solves by replacing them by a single explicit run through the patch, with possibly a solve of a local problem in each element.
The key ingredients on a single element are still the right inverse of the divergence~\cite[Corollary~3.4]{Cost_McInt_Bog_Poinc_10} and the right inverse of the normal trace~\cite[Theorem~7.1]{Demk_Gop_Sch_ext_III_12} in the $\tH(\dv)$ setting, but the single key ingredient becomes the right inverse of the trace shown in Demkowicz~\eal\ \cite[Theorem~6.1]{Demk_Gop_Sch_ext_I_09} in the $H^1$ setting. We combine these building blocks into a stability result on a single tetrahedron in Lemmas~\ref{lem_H1_simpl} and~\ref{lem_Hdv_simpl} in Appendix~\ref{app_ext_tetra}.
Gluing the elemental contributions together at the patch level turns out to be a rather involved ingredient of the proofs in three space dimensions, and we collect some auxiliary results for that purpose in Appendix~\ref{app_graphs}. A first difficulty is that the two-dimensional argument of turning around a vertex can no longer be invoked. To achieve a suitable enumeration of the mesh cells composing the patch in three dimensions, we rely on the notion of shelling of polytopes, see Ziegler~\cite[Chap.~8]{Zieg_poly_95}, which we reformulate for the present purposes in Lemma~\ref{lem:int_patch_enum}. Another difficulty is that we need to devise suitable functional transformations between different cells in the patch. This is done by introducing two- and three-coloring of some vertices lying on the boundary of the patch, possibly on a submesh of the original patch; how to achieve such colorings is described in Lemmas~\ref{lem:2color} and~\ref{lem:3color}.

For the sake of clarity of our exposition, we focus on discussing in details patches completely surrounding the vertex $\ver$, corresponding to an ``interior'' vertex when considering a mesh of some computational domain. Our technique, though, extends to the case where one considers a ``boundary'' vertex as well. Our main results in this context are Theorems~\ref{thm_H1_ext_gen} and~\ref{thm_Hdv_ext_gen}, whereas the reformulations as best-approximation results on piecewise polynomial data can be found in Corollaries~\ref{cor_H1_ext_gen} and~\ref{cor_Hdv_ext_gen}. The proofs of our main results concerning boundary vertices are given in Section~\ref{sec:boundary_ver}. The aforementioned application to a posteriori error analysis (Section~\ref{sec_a_post_appl}) then also covers some configurations of inhomogeneous Dirichlet and Neumann boundary conditions.
In the $H^1$ setting, we restrict ourselves for simplicity to the case where either Dirichlet or Neumann conditions are enforced; there is no such assumption in the $\tH(\dv)$ setting, which allows also for mixed Neumann--Dirichlet conditions.

Let us finally discuss some extensions of the present results.
In~\cite{Ern_Smears_Voh_H-1_lift_17}, we were recently able to employ them to construct $p$-robust $\tH(\dv)$ liftings over arbitrary domains, not just patches of elements sharing a given point. A natural extension of~\cite{Ern_Smears_Voh_H-1_lift_17} would be to obtain the same type of results in the $H^1$ setting. Another extension of the present work would be to cover the $\tH(\crl)$ case, hinging on the single tetrahedron results of Demkowicz~\eal\ \cite[Theorem~7.2]{Demk_Gop_Sch_ext_II_09}. We also mention that the application of the present results to the construction of $p$-robust a posteriori error estimates for problems with arbitrarily jumping coefficients is detailed in~\cite{Ciar_Voh_chan_coef_18}, to eigenvalue problems in~\cite{Canc_Dus_Mad_Stam_Voh_eigs_conf_17, Canc_Dus_Mad_Stam_Voh_eigs_nonconf_18}, to the Stokes problem in~\cite{Cer_Hech_Tang_Voh_ad_inex_S_18},
to linear elasticity in~\cite{Dors_Mel_p_rob_elast_13},
and to the heat equation in~\cite{Ern_Sme_Voh_heat_HO_Y_17, Ern_Sme_Voh_heat_HO_X_19}.

\section{Main results} \label{sec_res}

This section presents our main results, once the setting and basic notation have been fixed.

\subsection{Setting and basic notation} \label{sec_set}

We call tetrahedron any non-degenerate (closed) simplex in $\RR^3$, uniquely determined by four points in $\RR^3$ not lying in a plane. Let $\ver$ be a point in $\RR^3$. We consider a patch of tetrahedra around $\ver$, say $\Ta$, \ie, a finite collection of tetrahedra having $\ver$ as vertex, such that the intersection of any two distinct tetrahedra in $\Ta$ is either $\ver$, or a common edge, or a common face. A generic tetrahedron in $\Ta$ is denoted by $\elm$ and is also called an element or a cell. We let $\oma\subset\RR^3$ denote the interior of the subset $\cup_{\elm\in\Ta} \elm$. For the time being, we focus on the case where $\oma$ contains an open ball around $\ver$. The main application we have in mind is when $\ver$ is the interior vertex of a simplicial mesh $\Th$ of some computational domain $\Om$, so that $\ver$ lies in the interior of the patch $\oma$ surrounding it, see the left panel of Figure~\ref{fig:patch_int} for an illustration. The case where $\ver$ is a boundary vertex of the mesh entails some additional technicalities that we detail in Section~\ref{sec:boundary_ver_set}.

\begin{figure}[htb]
\centerline{\includegraphics[height=0.4\textwidth]{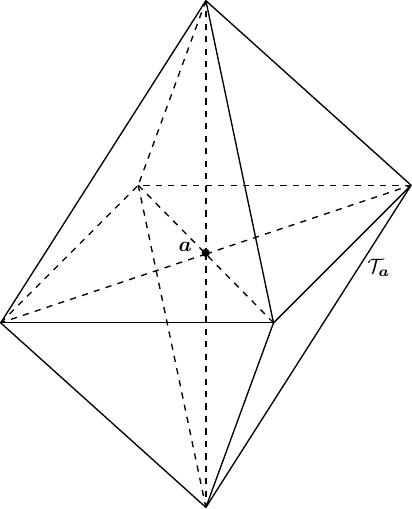} \qquad \qquad \includegraphics[height=0.4\textwidth]{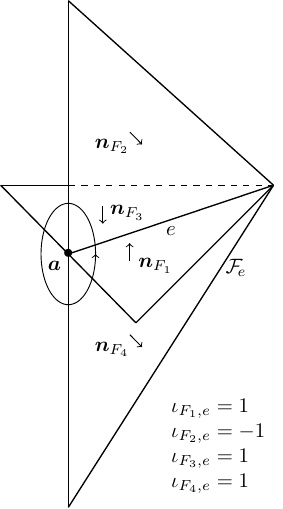}}
\caption{Example of an interior patch $\Ta$ (left); an edge $\edg \in \Ea$, the set $\Fe$ of all the faces that share it, the face normals, and the orientation indicators $\iota_{\sd,\edg}$ (right)}
\label{fig:patch_int}
\end{figure}

All the faces of the elements in the patch $\Ta$ are collected in the set $\Fa$ which is split into
\be
\Fa=\Fain \cup \Fab,
\ee
with $\Fain$ collecting all the interior faces (containing the vertex $\ver$ and shared by two distinct elements in $\Ta$) and $\Fab$ collecting the faces located in $\pt\oma$. For all faces $\sd\in\Fa$, $\tn_\sd$ denotes a unit normal vector to $\sd$ whose orientation is arbitrary but fixed for all $\sd\in\Fain$ and coinciding with the unit outward normal $\tn_{\oma}$ to $\oma$ for all $\sd\in\Fab$. We consider the jump operator $\jump{{\cdot}}_\sd$ for all $\sd\in\Fain$, yielding the difference (evaluated along $\tn_\sd$) of the traces of the argument from the two elements that share the interior face $\sd$ (the subscript $\sd$ is omitted if there is no ambiguity). We also need to consider edges. Let $\Ea$ collect all the edges in $\Ta$ sharing the vertex $\ver$; we refer to these edges as interior edges. Then, for each $\edg\in\Ea$, the set $\Fe$ collects all the faces in $\Fain$ sharing $\edg$, and the set $\Te$ collects all the cells in $\Ta$ sharing $\edg$. For each $\edg \in \Ea$, we fix one direction of rotation around $\edg$, and indicate for all $\sd \in \Fe$ by $\iota_{\sd,\edg}$ either equal to $1$ or to $-1$ whether $\tn_\sd$ complies with this direction or not, see the right panel of Figure~\ref{fig:patch_int} for an illustration.

We define the broken $H^1$-space on the patch $\Ta$ as
\be \label{eq_H_Ta}
    \HTa \eq \{v \in \Lti{\oma}; \, v|_\elm \in \Hoi{\elm},\,\, \forall \elm \in \Ta\},
\ee
and similarly the broken ${\bm H}(\dv)$-space on the patch $\Ta$ as
\be \label{eq_Hdv_Ta}
    \HdvTa \eq \{\tv \in \tLti{\oma}; \, \tv|_\elm \in \Hdvi{\elm},\,\, \forall \elm \in
    \Ta\}.
\ee
For any $v \in \HTa$, we can consider its piecewise (broken) gradient $\Grb
v$ defined as $(\Grb v)|_\elm = \Gr (v|_\elm)$, and similarly for any $\tv
\in \HdvTa$, we can consider its piecewise (broken) divergence $\Dvb \tv$
defined as $(\Dvb \tv)|_\elm = \Dv (\tv|_\elm)$, for all $\elm \in \Ta$. For
any $v \in \HTa$, the jumps $\jump{v}_\sd$ across any face $\sd\in\Fain$ are
well defined since the traces of $v$ on $\sd$ from the two cells sharing
$\sd$ are in $\Lti{\sd}$; similarly, the traces $v|_\Fab$ are well-defined.
We note that any function $v \in H^{1+\epsilon}(\Ta)$, $\epsilon>0$, is such that
\be \label{eq:sum_edge_jumps}
\sum_{\sd \in \Fe} \iota_{\sd, \edg} \, \jump{v}_\sd|_\edg = 0 \quad  \text{for all interior edges } \edg \in \Ea,
\ee
since the oriented sum of the jumps along a closed path around an interior edge is always zero. The definition of traces is a bit more subtle when one considers a field $\tv \in \HdvTa$. Let
$r_\sd \in \Lti{\sd}$ for all $\sd \in \Fa$. Then we say that
\bse \label{eq_norm_jump} \bat{2}
    \tv \scp \tn_\sd & = r_\sd & \quad & \forall \sd \in \Fab, \label{eq_norm_jump_1} \\
    \jump{\tv} \scp \tn_\sd & = r_\sd & \quad & \forall \sd \in \Fain \label{eq_norm_jump_2}
\eat
for a function $\tv \in \HdvTa$ if and only if
\be \label{eq_norm_jump_def}
    (\Dvb\tv,v)_\oma +  (\tv, \Gr v)_\oma = \sum_{\sd \in
    \Fa} (r_\sd, v)_\sd \qquad \forall v \in H^1(\oma).
\ee \ese
We will also need to prescribe the normal
component of vector fields in a single cell $\elm\in\Ta$ with unit outward normal $\tn_\elm$. Consider a non-empty
subset $\FKN\subset \FK$ where $\FK$ collects the faces of $\elm$. Given functions $r_\sd \in L^2(\sd)$
for all $\sd\in\FKN$, we
say that $\tv \scp \tn_\elm|_\sd = r_\sd$, $\forall \sd\in\FKN$, for a function $\tv \in \Hdvi{\elm}$ if
\be \label{eq:def_vn_K}
(\Dv \tv, \phi)_\elm + (\tv, \Gr \phi)_\elm = \sum_{\sd
\in \FKN}(r_\sd,\phi)_\sd \qquad \forall \phi \in \Hoi{\elm} \text{ s.t. } \phi|_\sd = 0 \; \forall \sd \in \FK
\setminus \FKN.
\ee

Let $p \ge 0$ denote an integer. We use the notation $\PP_{p}(\elm)$
for polynomials of order at most $p$ in the element $\elm \in \Ta$ and
$\PP_{p}(\sd)$ for polynomials of order at most $p$ in the face $\sd \in
\Fa$. We denote by $\PP_p(\Ta)$ the space composed of all functions defined
on the patch $\Ta$ whose restriction to any $\elm\in\Ta$ is in
$\PP_{p}(\elm)$. Similarly, $\PP_p(\Fa)$ stands for the space composed of all
functions defined on all faces from $\Fa$ whose restriction to any
$\sd\in\Fa$ is in $\PP_{p}(\sd)$. Analogous notation is used for any subset
of $\Fa$. We denote by $r_\elm$ the restriction of $r \in \PP_p(\Ta)$ to
$\elm \in \Ta$ and similarly by $r_\sd$ the restriction of $r \in \PP_p(\Fa)$
to $\sd \in \Fa$. Let $\RTN_p(\elm)$ be the Raviart--Thomas--N\'ed\'elec
polynomial space of vector-valued functions of order $p$ in the element $\elm \in
\Ta$, \ie, $\RTN_p(\elm) \eq [\PP_{p}(\elm)]^3 + \PP_{p}(\elm) \tx$.
Finally, $\RTN_p(\Ta)$ denotes the broken space composed of all functions whose
restriction to any element $\elm\in\Ta$ is in $\RTN_p(\elm)$.

For an element $\elm\in\Ta$, its shape-regularity parameter $\gamma_\elm$ is defined to be the ratio of its diameter to the diameter of the largest inscribed ball, and the shape-regularity parameter of the patch $\Ta$ is then defined to be $\gamma_{\Ta} \eq \max_{\elm\in\Ta} \gamma_\elm$.

\br[Orientation] \label{rem:orient}
The orientation of $\tn_\sd$ is irrelevant in~\eqref{eq:sum_edge_jumps}.
Indeed, changing the orientation of $\tn_\sd$ changes the sign of the jumps (evaluated along $\tn_\sd$) and at the same time the sign of $\iota_{\sd, \edg}$. Similarly, the orientation of $\tn_\sd$ is irrelevant in the left-hand side of~\eqref{eq_norm_jump_2}.
\er

\subsection{Broken $H^1$ polynomial extension}

Our main result for broken scalar extensions is the following.

\bt[Stable broken $H^1$ polynomial extension] \label{thm_H1_ext} Let $p \ge 1$. Let the interface-based $p$-degree polynomial $r \in \PP_{p}(\Fain)$ satisfy the following compatibility conditions:
\bse \label{eq_conds_H1} \bat{2}
    r_\sd|_{\sd\cap\pt\oma} & = 0 \qquad & & \text{on all interior faces } \sd \in \Fain, \label{eq_conds_H1_1} \\
    \sum_{\sd \in \Fe} \iota_{\sd, \edg} \, r_\sd|_\edg & = 0 & &  \text{on all interior edges } \edg \in \Ea. \label{eq_conds_H1_2}
\eat \ese
Then there exists a constant $C_{\rm st} > 0$ only depending on the patch shape-regularity parameter $\gamma_{\Ta}$ such that
\be \label{eq_H1_ext}
    \min_{\substack{v_p \in \PP_p(\Ta)\\
    v_p|_\sd = 0\,\,\forall \sd\in\Fab,\\
    \jump{v_p}_\sd = r_\sd \,\, \forall \sd \in \Fain}} \norm{\Grb v_p}_\oma
    \leq C_{\rm st} \min_{\substack{v \in \HTa\\
    v|_\sd = 0 \,\, \forall \sd \in \Fab,\\
    \jump{v}_\sd = r_\sd \,\, \forall \sd \in \Fain}} \norm{\Grb v}_\oma,
\ee
where the minimization sets are non-empty and both minimizers
in~\eqref{eq_H1_ext} are unique. \et

The compatibility conditions~\eqref{eq_conds_H1} are natural since $r_\sd$ is used to prescribe interface jumps. Indeed, these jumps necessarily vanish on the points of the interfaces located on $\pt\oma$ since the considered functions vanish on $\pt\oma$; moreover, \eqref{eq_conds_H1_2} follows from~\eqref{eq:sum_edge_jumps}. The minimizers in~\eqref{eq_H1_ext} are respectively denoted by $v_p^*$ and $v^*$, so that \eqref{eq_H1_ext} becomes
\be \norm{\Grb v_p^*}_\oma \leq C_{\rm st} \norm{\Grb v^*}_\oma. \ee
Note also that since the minimization sets are non-empty and the left one is a subset of the right one by definition, the inequality in the other direction, $\norm{\Grb v^*}_\oma\leq \norm{\Grb v_p^*}_\oma$, is trivial.

\subsection{Broken $\tH(\dv)$ polynomial extension}

Our main result for broken vector extensions is the following.

\bt[Stable broken $\tH(\dv)$ polynomial extension] \label{thm_Hdv_ext} Let $p \ge
0$. Let the element- and face-based $p$-degree polynomial $r \in \PP_p(\Ta) \times \PP_{p}(\Fa)$ satisfy the following
compatibility condition:
\be \label{eq_comp}
    \sum_{\elm \in \Ta} (r_\elm, 1)_\elm - \sum_{\sd \in
    \Fa} (r_\sd, 1)_\sd = 0.
\ee
Then there exists a constant $C_{\rm st} > 0$ only depending on the patch shape-regularity parameter $\gamma_{\Ta}$ such that
\be \label{eq_Hdv_ext}
    \min_{\substack{\tv_p \in \RTN_p(\Ta)\\
    \tv_p \scp \tn_\sd = r_\sd \,\, \forall \sd \in \Fab\\
    \jump{\tv_p}\scp \tn_\sd = r_\sd \,\, \forall \sd \in \Fain\\
    \Dvb \tv_p|_\elm = r_\elm \,\, \forall \elm \in \Ta}}
    \norm{\tv_p}_\oma \leq C_{\rm st}
    \min_{\substack{\tv \in \HdvTa\\
    \tv \scp \tn_\sd = r_\sd \,\, \forall \sd \in \Fab\\
    \jump{\tv}\scp \tn_\sd = r_\sd \,\, \forall \sd \in \Fain\\
    \Dvb \tv|_\elm = r_\elm \,\, \forall \elm \in \Ta}}
    \norm{\tv}_\oma,
\ee
where the minimization sets are non-empty and both minimizers
in~\eqref{eq_Hdv_ext} are unique. \et

The compatibility condition~\eqref{eq_comp} is again natural here, since it follows from~\eqref{eq_norm_jump_def} with the test function equal to $1$ in $\oma$. The minimizers in~\eqref{eq_Hdv_ext} are respectively denoted by $\tv_p^*$ and $\tv^*$, so that~\eqref{eq_Hdv_ext} becomes \be \norm{\tv_p^*}_\oma \le C_{\rm st} \norm{\tv^*}_\oma. \ee Since the minimization sets are non-empty and the left one is a subset of the right one by definition, the inequality in the other direction, $\norm{\tv^*}_\oma\le \norm{\tv_p^*}_\oma$, is again trivial.

\subsection{Boundary vertices} \label{sec:boundary_ver_set}

\begin{figure}[htb]
\centerline{\includegraphics[height=0.4\textwidth]{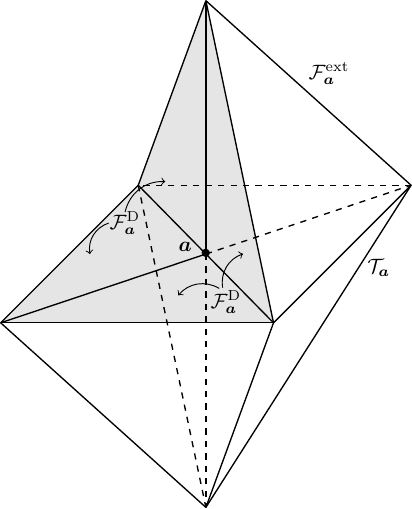} \qquad\qquad \includegraphics[height=0.4\textwidth]{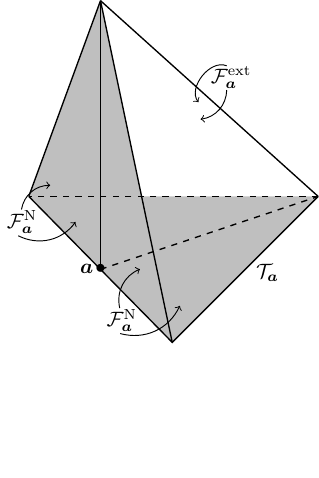}}
\caption{Two examples of boundary patches $\Ta$;
in both cases, the faces in $\Fab$, and thus the subset $\pt\omab$,
are shown in white.
Left: patch where all the faces in $\Fab$ have at least one vertex lying in
the interior of $\pt\omab$; right: patch where there are faces in $\Fab$ (actually both of them) that do not have any vertex lying in
the interior of $\pt\omab$ but where $|\Ta| \leq 2$}
\label{fig:patch_bnd}
\end{figure}

We consider in this section the case where the patch domain $\oma$
does not contain an open ball around the point $\ver$; typically, $\ver$ is a mesh vertex lying on the boundary of some computational domain $\Om$. In this case, the patch domain $\oma$ only contains an open ball around $\ver$ minus some sector with solid angle $\theta_\ver\in(0,4\pi)$, see Figure~\ref{fig:patch_bnd} for two examples.

The set $\Fa$ collecting all the faces of $\Ta$ is now divided into four disjoint subsets:
\be \label{eq_faces_bnd}
\Fa = \Fain \cup \Fab \cup \FaD \cup \FaN,
\ee
where the set $\Fain$ collects (as before) the faces interior to $\oma$, that is, the faces containing the vertex $\ver$ and shared by two distinct elements in $\Ta$, the set $\Fab$ collects the faces that are subsets of $\pt \oma$ that do not contain $\ver$, and $\FaD\cup \FaN$ collects the faces that are subsets of $\pt \oma$ that contain $\ver$. The distinction between $\FaD$ and $\FaN$ is needed because some prescription is to be enforced on $\FaD$ ($H^1$-extension) or on $\FaN$ ($\tH(\dv)$-extension). These additional prescriptions are motivated by the handling of Dirichlet or Neumann boundary conditions as further highlighted in Section~\ref{sec_a_post_appl}.
Correspondingly, we set $\pt\omab \eq \cup_{\sd\in\Fab} \sd$, $\pt\omaD \eq \cup_{\sd\in\FaD}\sd$, and $\pt\omaN \eq \cup_{\sd\in\FaN}\sd$, so that
\be
    \pt \oma = \pt\omab \cup \pt\omaD \cup \pt\omaN,
\ee
see Figure~\ref{fig:patch_bnd}.
Faces in the three sets $\Fab$, $\FaD$, and $\FaN$ are assigned a unit normal vector $\tn_\sd$ pointing outward $\oma$. We remark that $\Fain$ can be empty (if $\Ta$ consists of a single tetrahedron), that $\Fab$ is always non-empty, and that either $\FaD$ or $\FaN$ can be empty, but not both at the same time. Finally, the set $\Ea$ collects all the edges in $\Ta$ sharing the vertex $\ver$ (note that some of these edges are now located on $\pt\oma$) and, for each edge $\edg\in\Ea$, $\Fe$ collects all the faces in $\Fa$ sharing $\edg$ (note that $\Fe$ is now a subset of $\Fain\cup\FaD\cup\FaN$). As above, for each edge $\edg \in \Ea$ and each face $\sd \in \Fe$, $\iota_{\sd,\edg}$ is either equal to $1$ or to $-1$ and indicates whether $\tn_\sd$ complies with the fixed direction of rotation around $\edg$ or not, see the right panel of Figure~\ref{fig:patch_int}.

We now present our main results for boundary vertices. In the $H^1$ setting, they request that either (the Dirichlet part of the boundary) $\pt\omaD$ is empty, or (the Neumann part of the boundary) $\pt\omaN$ is empty.
No such assumption on $\pt\omaD$ and $\pt\omaN$
is needed in the $\tH(\dv)$ setting. Moreover, in both settings,
we assume that either all the faces in $\Fab$ have at least one vertex lying in the interior of $\pt\omab$, or that the number of elements in the patch $\Ta$ is at most two. For instance, the patch in the left
panel of Figure~\ref{fig:patch_bnd} satisfies the first assumption, and that in the right panel the second assumption. Other cases can be treated, see Remark~\ref{rem_bnd_other} below, but the analysis is increasingly technical.


\bt[Stable broken $H^1$ polynomial extension] \label{thm_H1_ext_gen} Let $p \ge 1$ and let either $\FaD = \emptyset$ or $\FaN = \emptyset$. Assume either that all the faces in $\Fab$ have at least one vertex lying in the interior of $\pt\omab$, or that $|\Ta| \leq 2$.
Let $r \in \PP_{p}(\Fain \cup \FaD)$ satisfy the following compatibility
conditions:
\bse \label{eq_conds_H1_gen} \bat{2}
    r_\sd|_{\sd\cap\pt\omab} & = 0 \qquad & & \forall \sd \in \Fain \cup \FaD, \label{eq_conds_H1_1_gen} \\
    \sum_{\sd \in \Fe} \iota_{\sd, \edg} \, r_\sd|_\edg & = 0 & &  \forall \edg \in \Ea \text{ such that }
    \Fe \cap \FaN = \emptyset. \label{eq_conds_H1_2_gen}
\eat \ese
Then there exists a constant $C_{\rm st} > 0$ only depending on the patch shape-regularity parameter $\gamma_{\Ta}$ such that
\be \label{eq_H1_ext_gen}
    \min_{\substack{v_p \in \PP_p(\Ta)\\
    v_p|_\sd = 0\,\,\forall \sd\in\Fab,\\
    v_p|_\sd = r_\sd\,\,\forall \sd\in\FaD,\\
    \jump{v_p}_\sd = r_\sd \,\, \forall \sd \in \Fain}} \norm{\Grb v_p}_\oma
    \leq C_{\rm st} \min_{\substack{v \in \HTa\\
    v|_\sd = 0 \,\, \forall \sd \in \Fab,\\
    v|_\sd = r_\sd\,\,\forall \sd\in\FaD,\\
    \jump{v}_\sd = r_\sd \,\, \forall \sd \in \Fain}} \norm{\Grb v}_\oma,
\ee
where the minimization sets are non-empty and both minimizers
in~\eqref{eq_H1_ext_gen} are unique. \et

\bt[Stable broken $\tH(\dv)$ polynomial extension] \label{thm_Hdv_ext_gen} Let $p \ge 0$.
Assume either that all the faces in $\Fab$ have at least one vertex lying in the interior of $\pt\omab$, or that $|\Ta| \leq 2$.
Let $r \in \PP_p(\Ta) \times \PP_{p}(\Fain \cup \Fab \cup \FaN)$
satisfy the following compatibility condition:
\be \label{eq_comp_gen}
    \sum_{\elm \in \Ta} (r_\elm, 1)_\elm - \sum_{\sd \in
    \Fa} (r_\sd, 1)_\sd = 0 \qquad
    \text{ if } \FaD = \emptyset.
\ee
Then there exists a constant $C_{\rm st} > 0$ only depending on the
patch shape-regularity parameter $\gamma_{\Ta}$ such that
\be \label{eq_Hdv_ext_gen}
    \min_{\substack{\tv_p \in \RTN_p(\Ta)\\
    \tv_p \scp \tn_\sd = r_\sd \,\, \forall \sd \in \Fab\\
    \tv_p \scp \tn_\sd = r_\sd \,\, \forall \sd \in \FaN\\
    \jump{\tv_p}\scp \tn_\sd = r_\sd \,\, \forall \sd \in \Fain\\
    \Dvb \tv_p|_\elm = r_\elm \,\, \forall \elm \in \Ta}}
    \norm{\tv_p}_\oma \leq C_{\rm st}
    \min_{\substack{\tv \in \HdvTa\\
    \tv \scp \tn_\sd = r_\sd \,\, \forall \sd \in \Fab\\
    \tv \scp \tn_\sd = r_\sd \,\, \forall \sd \in \FaN\\
    \jump{\tv}\scp \tn_\sd = r_\sd \,\, \forall \sd \in \Fain\\
    \Dvb \tv|_\elm = r_\elm \,\, \forall \elm \in \Ta}}
    \norm{\tv}_\oma,
\ee
where the minimization sets are non-empty and both minimizers
in~\eqref{eq_Hdv_ext_gen} are unique. \et

\br[$\FaD \cup \FaN$ lying in two hyperplanes]\label{rem_bnd_other} Theorems~\ref{thm_H1_ext_gen} and~\ref{thm_Hdv_ext_gen} for instance also hold in the case where the set $\FaD \cup \FaN$ is contained in two hyperplanes as in the right panel of Figure~\ref{fig:patch_bnd} and in topologically equivalent situations, in place of the interior vertex condition in $\Fab$ or the condition $|\Ta| \leq 2$, with a similar proof as in Section~\ref{sec_bound_H1_Dir} below. \er

\section{Equivalent reformulations} \label{sec_eq_ref}

We reformulate in this section Theorems~\ref{thm_H1_ext} and~\ref{thm_Hdv_ext} in an equivalent way as best-approximation results of discontinuous piecewise polynomial data. This will in particular allow for a straightforward application to a posteriori error analysis in Section~\ref{sec_a_post_appl}. For further insight, as well as to make a link with previous contributions on the subject, we also give equivalent reformulations of the right-hand sides in~\eqref{eq_H1_ext} and~\eqref{eq_Hdv_ext}. Finally, we also reformulate Theorems~\ref{thm_H1_ext_gen} and~\ref{thm_Hdv_ext_gen} as best-approximation results.

\subsection{Reformulation as best-approximation results}

Let us set
\bse \ba
\Hooi{\oma} &\eq \{v\in H^1(\oma);~ v|_{\pt\oma}=0 \}, \\
\label{eq_H_dvs} \Hdvs &\eq \{\tv \in \Hdvi{\oma};~ \tv \scp \tn_{\pt\oma} =
0\}. \ea \ese
The result of Theorem~\ref{thm_H1_ext} can be rephrased as follows.

\bc[$H^1$ best-approximation] \label{cor_H1_ext} Let the assumptions of
Theorem~\ref{thm_H1_ext} hold true. Consider any $\tau_p\in \PP_p(\Ta)$ so that
$\tau_p|_\sd = 0$ $\forall \sd\in\Fab$ and $\jump{\tau_p}_\sd = r_\sd$
$\forall \sd \in \Fain$. Then the following holds true: \be \label{eq_H1_ext_equiv}
    \min_{v_p \in \PP_p(\Ta) \cap \Hooi{\oma}} \norm{\Grb (\tau_p - v_p)}_\oma
    \leq C_{\rm st}
    \min_{v \in \Hooi{\oma}} \norm{\Grb (\tau_p - v)}_\oma.
\ee \ec

\bp Direct consequence of~\eqref{eq_H1_ext} upon shifting the minimization
sets by $\tau_p$. Note that the existence of $\tau_p$ follows from the
non-emptiness of the discrete minimization set in~\eqref{eq_H1_ext}. \ep

\br[Minimizers] The unique minimizers in~\eqref{eq_H1_ext_equiv} are
respectively $\pra^\ver \in \PP_p(\Ta) \cap \Hooi{\oma}$ such that
\be \label{eq_sah}
    (\Gr \pra^\ver, \Gr v_p)_\oma = (\Grb \tau_p, \Gr v_p)_\oma \qquad \forall v_p \in \PP_p(\Ta) \cap \Hooi{\oma},
\ee
and $s^\ver \in \Hooi{\oma}$ such that
\be \label{eq_sa}
    (\Gr s^\ver, \Gr v)_\oma = (\Grb \tau_p, \Gr v)_\oma \qquad \forall v \in \Hooi{\oma}. \ee
The minimizers in~\eqref{eq_H1_ext} are such that $v_p^* = \tau_p -
\pra^\ver$ and $v^* = \tau_p - s^\ver$. \er

Similarly, in the $\tH(\dv)$-setting, Theorem~\ref{thm_Hdv_ext} can be reformulated as follows.

\bc[$\tH(\dv)$ best-approximation] \label{cor_Hdv_ext} Let the assumptions of Theorem~\ref{thm_Hdv_ext} hold true. Consider any $\btau_p \in \RTN_p(\Ta)$ so
that $\btau_p\scp\tn_\sd=r_\sd$ $\forall \sd\in\Fab$ and
$\jump{\btau_p} \scp \tn_\sd = r_\sd$ $\forall \sd \in \Fain$. Then the
following holds true:
\be \label{eq_Hdv_ext_equiv}
    \min_{\substack{\tv_p \in \RTN_{p}(\Ta) \cap \Hdvs\\
    \Dv \tv_p|_\elm=r_\elm - \Dvb \btau_p|_\elm \,\, \forall \elm \in \Ta}} \norm{\btau_p + \tv_p}_\oma
    \leq C_{\rm st}
    \min_{\substack{\tv \in \Hdvs\\
    \Dv \tv|_\elm=r_\elm - \Dvb \btau_p|_\elm \,\, \forall \elm \in \Ta}} \norm{\btau_p + \tv}_\oma.
\ee
\ec

\bp Direct consequence of~\eqref{eq_Hdv_ext} upon shifting the minimization
sets by $\btau_p$, the existence of $\btau_p$ following from the
non-emptiness of the discrete minimization set in~\eqref{eq_Hdv_ext}. \ep

\br[Minimizers] The unique minimizers in~\eqref{eq_Hdv_ext_equiv} are
respectively $\fra^\ver \in \RTN_{p}(\Ta) \cap \Hdvs$ with $\Dv
\fra^\ver|_\elm=r_\elm - \Dvb \btau_p|_\elm$ for all $\elm \in \Ta$
such that
\be \label{eq_fah}
    (\fra^\ver, \tv_p)_\oma = - (\btau_p, \tv_p)_\oma \qquad \forall \tv_p \in \RTN_{p}(\Ta) \cap
    \Hdvs, \, \Dv \tv_p = 0,
\ee
and $\fr^\ver \in \Hdvs$ with $\Dv \fr^\ver|_\elm = r_\elm - \Dvb
\btau_p|_\elm$ for all $\elm \in \Ta$ such that
\be \label{eq_fa}
    (\fr^\ver, \tv)_\oma = - (\btau_p, \tv)_\oma \qquad \forall \tv \in \Hdvs,
    \, \Dv \tv=0.
\ee
The minimizers in~\eqref{eq_Hdv_ext} are such that $\tv_p^* = \btau_p +
\fra^\ver$ and $\tv^* = \btau_p + \fr^\ver$. \er

\subsection{Equivalent reformulations at the continuous level} \label{sec_ref}

We summarize here additional equivalence results on the continuous-level minimizations appearing in the right-hand sides of~\eqref{eq_H1_ext_equiv} and~\eqref{eq_Hdv_ext_equiv}. Let us first set
\[
    \Hsa \eq \{v \in \Hoi{\oma}; \, (v, 1)_\oma = 0\},
\]
and let us define the following subspace of $\tH(\crl,\oma)$:
\[
\tH_*(\crl,\oma) \eq \{ \tv \in \tH(\crl,\oma);\, (\tv,\nabla\phi)_{\oma} = 0,\,\forall \phi\in \Hsa\}.
\]

We first show that the $\Hooi{\oma}$-minimization of Corollary~\ref{cor_H1_ext} is equivalent to evaluating a dual $\tH(\crl)$-norm of a suitable linear form defined from the data $r_\sd$, and consequently to evaluating the energy norm of its $\tH_*(\crl,\oma)$-lifting.

\bc[$\tH(\crl)$ form of the $H^1$-minimization] \label{cor_H1_ext_equiv} Let the assumptions of Corollary~\ref{cor_H1_ext} hold true. Recall that $\tau_p\in \PP_p(\Ta)$ verifies $\tau_p|_\sd = 0$ $\forall \sd\in\Fab$ and $\jump{\tau_p}_\sd = r_\sd$ $\forall \sd \in \Fain$. Let $\tr^\ver \in \tH_*(\crl,\oma)$ solve
\be \label{eq_tr_ver}
    (\Crl \tr^\ver, \Crl \tv)_\oma = -(\Grb \tau_p, \Crl \tv)_\oma
    \qquad \forall \tv \in \tH_*(\crl,\oma),
\ee
where the right-hand side can be formally rewritten as $(\Grb \tau_p, \Crl \tv)_\oma = \sum_{\sd\in\Fain} (r_\sd\tn_\sd,\Crl\tv)_\sd$ owing to the elementwise Green formula. Then, we have
\be
    \min_{v \in \Hooi{\oma}} \norm{\Grb (\tau_p - v)}_\oma = \norm{\Crl \tr^\ver}_\oma = \max_{\substack{\tv \in \tH(\crl,\oma)\\ \norm{\Crl \tv}_\oma = 1}} \left\{ \sum_{\sd\in\Fain} (r_\sd\tn_\sd,\Crl\tv)_\sd\right\}.
\ee
\ec

\bp Since $s^\ver$ solves~\eqref{eq_sa}, i.e., $(\Gr s^\ver-\Grb \tau_p, \Gr v)_\oma = 0$ for all $v \in \Hooi{\oma}$, a distributional argument implies that the vector field $\Gr \pr^\ver-\Grb \tau_p$ is divergence-free in $\oma$. The boundary $\pt\oma$ being connected, we infer that there is
$\tr^\ver\in \tH(\crl,\oma)$ such that $\Gr \pr^\ver - \Grb \tau_p = \Crl \tr^\ver$, and without loss of generality, we can take $\tr^\ver\in \tH_*(\crl,\oma)$
since $\oma$ is simply connected so that $\tH(\crl,\oma) = \tH_*(\crl,\oma) \oplus \nabla \Hsa$ (the sum being $L^2$-orthogonal) and fields in $\nabla \Hsa$ are curl-free. We now observe that we have
\be \label{eq_crl}
    (\Crl \tr^\ver, \Crl \tv)_\oma = (\Gr \pr^\ver - \Grb \tau_p, \Crl \tv)_\oma = - (\Grb \tau_p, \Crl \tv)_\oma \qquad \forall \tv \in \tH_*(\crl,\oma),
\ee
since $\pr^\ver \in \Hooi{\oma}$, so that~\eqref{eq_tr_ver} follows. Finally,
\[
    \min_{v \in \Hooi{\oma}} \norm{\Grb (\tau_p - v)}_\oma =
        \norm{\Grb (\tau_p - \pr^\ver)}_\oma = \norm{\Crl \tr^\ver}_\oma
        = \max_{\substack{\tv \in \tH(\crl,\oma)\\ \norm{\Crl \tv}_\oma = 1}} (\Grb \tau_p, \Crl \tv)_\oma,
\]
using~\eqref{eq_crl} and noting that any function $\tv \in \nabla \Hsa$ is automatically excluded from the maximization set by the constraint $\norm{\Crl \tv}_\oma = 1$.
\end{proof}

Let us now show that the constrained $\Hdvs$-minimization of Corollary~\ref{cor_Hdv_ext} is equivalent to evaluating a dual $H^1$-norm of a suitable linear form defined from the data $r_\elm$ and $r_\sd$ and consequently to evaluating the energy norm of its $\Hsa$-lifting.

\bc[$H^1$ form of the $\tH(\dv)$-minimization] \label{cor_Hdv_ext_equiv} Let the assumptions of Corollary~\ref{cor_Hdv_ext} hold true. Let $r^\ver \in \Hsa$ solve
\be \label{eq_r_ver}
    (\Gr r^\ver, \Gr v)_\oma = \sum_{\elm \in \Ta} (r_\elm, v)_\elm - \sum_{\sd \in
    \Fa} (r_\sd, v)_\sd \qquad \forall v \in \Hsa.
\ee
Then
\be \label{eq_r_ver_dual}
    \min_{\substack{\tv \in \Hdvs\\
    \Dv \tv|_\elm = r_\elm - \Dvb \btau_p|_\elm\,\, \forall \elm \in \Ta}} \norm{\btau_p + \tv}_\oma =
    \norm{\Gr r^\ver}_\oma = \max_{\substack{v \in \Hoi{\oma}\\ \norm{\Gr v}_\oma = 1}}\left\{\sum_{\elm \in \Ta} (r_\elm, v)_\elm - \sum_{\sd \in
    \Fa} (r_\sd, v)_\sd\right\}.
\ee
\ec

\bp The elementwise Green formula combined with the definition of $\btau_p$ gives
\ban \sum_{\elm \in \Ta} (r_\elm, v)_\elm - \sum_{\sd \in
    \Fa} (r_\sd, v)_\sd {} & = \sum_{\elm \in \Ta} (r_\elm, v)_\elm - \sum_{\elm \in \Ta}(\btau_p \scp
    \tn_\elm, v)_{\pt \elm}\\
    {} & = \sum_{\elm \in \Ta} (r_\elm - \Dvb \btau_p, v)_\elm - \sum_{\elm \in \Ta}(\btau_p, \Gr v)_\elm,
\ean
and we immediately see that~\eqref{eq_r_ver} is the primal formulation of~\eqref{eq_fa}. As both formulations are equivalent, $\fr^\ver = - \Gr r^\ver - \btau_p$, \cf\ \cite[Remark~3.15]{Ern_Voh_p_rob_15}. The equality~\eqref{eq_r_ver_dual} follows immediately from~\eqref{eq_r_ver}, writing the maximum first for all $v \in \Hsa$ with $\norm{\Gr v}_\oma = 1$ and then noting that any function $v$ constant on $\oma$ is automatically excluded from the maximization set by the constraint $\norm{\Gr v}_\oma = 1$. \ep

Corollaries~\ref{cor_H1_ext_equiv} and~\ref{cor_Hdv_ext_equiv} allow us to draw insightful links with the literature. On the one hand,
Corollary~\ref{cor_H1_ext_equiv}
explains how the right-hand side in~\eqref{eq_H1_ext_equiv} links to the continuous minimization used in~\cite[Lemma~3.13]{Ern_Voh_p_rob_15}. Therein, in two space dimensions, the field $\Re^\perp(\Grb \tau_p)$ has been employed in the definition of the function $r_\verbis$ by formulas~(3.19) and~(3.32), where $\Re^\perp = \left(\begin{smallmatrix}0&-1\\1&\phantom{-}0\end{smallmatrix}\right)$ is the rotation by $\frac{\pi}{2}$; then $\norm{\Gr r_\verbis}_\omabis$ of~\cite{Ern_Voh_p_rob_15} equals the present $\min_{v \in \Hooi{\oma}} \norm{\Grb (\tau_p - v)}_\oma$, and, in particular, we have
\[
    \min_{v \in \Hooi{\oma}} \norm{\Grb (\tau_p - v)}_\oma =
    \max_{\substack{v \in \Hoi{\oma}\\ \norm{\Gr v}_\oma = 1}}
    \left\{-\left(\Re^\perp(\Grb \tau_p), \Gr v\right)_\oma\right\}.
\]
On the other hand, the maximization form in Corollary~\ref{cor_Hdv_ext_equiv}
has been used previously in~\cite[Theorem~7]{Brae_Pill_Sch_p_rob_09} and~\cite[Lemma~3.12 and Corollary~3.16]{Ern_Voh_p_rob_15}.

\subsection{Boundary vertices}
\label{sec:boundary_ver_ref}

In this section, we reformulate Theorems~\ref{thm_H1_ext_gen} and~\ref{thm_Hdv_ext_gen} as best-approximation results on discontinuous piecewise polynomial data on boundary patches. The proofs are omitted since they are similar to the previous ones. In view of application to a posteriori error analysis of model problems with non-homogeneous boundary conditions, it is convenient to introduce some additional boundary data denoted by $\uu_\ver^{\mathrm{D}}$ and $\sigma_\ver^{\mathrm{N}}$ in the $H^1$ and $\tH(\dv)$ settings, respectively.

\bc[$H^1$ best-approximation] \label{cor_H1_ext_gen} Let the assumptions of
Theorem~\ref{thm_H1_ext_gen} hold true. Let $\uu_\ver^{\mathrm{D}} \in \PP_p(\FaD) \cap C^0(\pt\omaD)$. Consider any $\tau_p\in \PP_p(\Ta)$ so
that $\tau_p|_\sd = 0$ $\forall \sd\in\Fab$, $\tau_p|_\sd -
\uu_\ver^{\mathrm{D}}|_\sd = r_\sd$ $\forall \sd\in\FaD$, and
$\jump{\tau_p}_\sd = r_\sd$ $\forall \sd \in \Fain$. Then the following holds true:
\be \label{eq_H1_ext_equiv_gen}
    \min_{\substack{v_p \in \PP_p(\Ta) \cap \Hoi{\oma}\\ v_p|_\sd = 0 \,\, \forall \sd \in \Fab\\
    v_p|_\sd = \uu_\ver^{\mathrm{D}}|_\sd \,\, \forall \sd \in \FaD}} \norm{\Grb (\tau_p - v_p)}_\oma
    \leq C_{\rm st}
    \min_{\substack{v \in \Hoi{\oma}\\ v|_\sd = 0 \,\, \forall \sd \in \Fab\\
    v|_\sd = \uu_\ver^{\mathrm{D}}|_\sd \,\, \forall \sd \in \FaD}} \norm{\Grb (\tau_p - v)}_\oma
\ee \ec

\bc[$\tH(\dv)$ best-approximation] \label{cor_Hdv_ext_gen} Let the
assumptions of Theorem~\ref{thm_Hdv_ext_gen} hold true. Let $\sigma_\ver^{\mathrm{N}} \in \PP_p(\FaN)$. Consider any $\btau_p \in
\RTN_p(\Ta)$ so that $\btau_p\scp\tn_\sd=r_\sd$ $\forall \sd\in\Fab$,
$\btau_p\scp\tn_\sd + \sigma_\ver^{\mathrm{N}} = r_\sd$ $\forall \sd\in\FaN$,
and $\jump{\btau_p} \scp \tn_\sd = r_\sd$ $\forall \sd \in \Fain$. Then the
following holds true:
\be \label{eq_Hdv_ext_equiv_gen}
    \min_{\substack{\tv_p \in \RTN_{p}(\Ta) \cap \Hdvi{\oma}\\
    \tv_p \scp \tn_\sd = 0 \,\, \forall \sd \in \Fab\\
    \tv_p \scp \tn_\sd = \sigma_\ver^{\mathrm{N}}|_\sd \,\, \forall \sd \in \FaN\\
    \Dv \tv_p|_\elm = r_\elm - \Dvb \btau_p|_\elm \,\, \forall \elm \in \Ta}} \norm{\btau_p + \tv_p}_\oma
    \leq C_{\rm st}
    \min_{\substack{\tv \in \Hdvi{\oma}\\
    \tv \scp \tn_\sd = 0 \,\, \forall \sd \in \Fab\\
    \tv \scp \tn_\sd = \sigma_\ver^{\mathrm{N}}|_\sd \,\, \forall \sd \in \FaN\\
    \Dv \tv|_\elm = r_\elm - \Dvb \btau_p|_\elm \,\, \forall \elm \in \Ta}} \norm{\btau_p + \tv}_\oma.
\ee
\ec

\section{Application to a posteriori error analysis}\label{sec_a_post_appl}

We show in this section how to apply our results to a posteriori error analysis. For this purpose, let $\Omega\subset\mathbb R^3$ be a polyhedral Lipschitz domain (open, bounded, and connected set). Let $\Th$ be a matching tetrahedral mesh of $\Om$, shape-regular with parameter $\gamma_{\Th}>0$ that bounds the ratio of any element diameter to the diameter of its largest inscribed ball. All faces of the mesh are collected in the set $\Fh$, with faces lying on the boundary of $\Om$ forming two disjoint sets $\FN$ and $\FD$ covering two subdomains $\GN$ and $\GD$ that form a partition of $\pt \Om$. Consider the Laplace problem
\bse \label{eq_Lap} \bat{2}
    \ds - \Lap \uu & = f \qquad & & \mbox{ in } \, \Om, \label{eq_Lap_eq} \\
    \ds \uu & = \uuD & & \mbox{ on } \, \GD, \label{eq_Lap_Dir}\\
    \ds - \Gr \uu \scp \tn_\Om & = \uuN & & \mbox{ on } \, \GN, \label{eq_Lap_Neu}
\eat \ese
where, for simplicity, $f \in \PP_{p'-1}(\Th)$, $\uuD \in \PP_{p'}(\FD) \cap
C^0(\GD)$, and $\uuN \in \PP_{p'-1}(\FN)$, for a polynomial degree $p' \geq
1$. If $|\GD| = 0$, we need to additionally suppose the Neumann compatibility condition $(f,1)_{\Om}=(\uuN,1)_{\pt \Om}$. The weak solution of problem~\eqref{eq_Lap} is
a function $\uu \in \Ho$ such that $\uu|_{\GD} = \uuD$ and such that
\be \label{eq_Lapl_VF_inh}
    (\Gr \uu, \Gr v)_{\Om} = (f,v)_{\Om} - (\uuN, v)_{\GN} \qquad \forall v \in \Ho \text{ such that } v|_{\GD}=0.
\ee
When $\GD = \emptyset$, uniqueness is imposed through $(\uu,1)_\Om=0$.
For more general data $f$, $\uuD$, and $\uuN$, data oscillation
terms arise in the a posteriori error analysis, see~\cite{Dol_Ern_Voh_hp_16} and references therein for details.

Let $\uh \in \PP_{p'}(\Th)$ be an approximate solution to the problem~\eqref{eq_Lap}; $\uh$ can be primal-nonconforming in the sense that $\uh \not \in \Ho$ and $\uh|_{\GD} \neq \uuD$, as well as dual-nonconforming in the sense that $-\Grb \uh \not \in \Hdv$, $\Dv(-\Grb \uh) \neq f$, and $(-\Grb \uh \scp \tn_\Om)|_{\GN} \neq \uuN$. The results of this paper have a direct application to a posteriori error analysis since they allow us to construct two central objects leading to guaranteed reliability and $p$-robust local efficiency. The first is a so-called potential reconstruction $\prh \in \PP_{p'+1}(\Th) \cap \Ho$, equal to $\uuD$ on $\GD$. The second one is a so-called equilibrated flux reconstruction $\frh \in \RTN_{p'}(\Th) \cap \Hdv$, such that $\Dv \frh = f$ in $\Om$ and $\frh \scp \tn|_{\GN} = \uuN$ on $\GN$.

Let us collect all the mesh vertices in the set $\Vh$, and for any mesh vertex $\ver \in \Vh$, let the patch $\Ta\subset\Th$ be given by the elements in $\Th$ having $\ver$ as vertex, whereas $\oma\subset\Omega$ is the corresponding open subdomain of $\Om$. Let $\psi_\ver$ be the ``hat'' function associated with the vertex $\ver$: this is a continuous function, piecewise affine with respect to the mesh $\Th$, which takes the value $1$ at the vertex $\ver$ and $0$ at the other vertices. Its support is thus the closure of the patch subdomain $\oma$. We also split the vertex set as $\Vh=\Vhint\cup\Vhext$, where $\Vhint$ contains all interior vertices and $\Vhext$ all boundary vertices. The faces of the elements in the interior patches $\Ta$ (\ie, associated with an interior vertex $\ver$) are collected in $\Fa=\Fain \cup \Fab$, in conformity with Section~\ref{sec_set}. For a boundary vertex $\ver \in \Vhext$, the split is $\Fa = \Fain \cup \Fab \cup \FaD \cup \FaN$, as in Section~\ref{sec:boundary_ver_set}, where
$\FaD$ collects the Dirichlet boundary faces from $\partial\oma\cap\GD$ and sharing the point $\ver$, and $\FaN$ the Neumann boundary faces from $\partial\oma\cap\GN$ and sharing the point $\ver$. To have a more unified formalism between interior and boundary vertices, we conventionally define $\FaD$ and $\FaN$ to be empty sets for all $\ver\in\Vhint$.

We define the potential reconstruction following~\cite[Construction~3.8 and Remark~3.10]{Ern_Voh_p_rob_15}, \cf\ also~\cite{Cars_Merd_survey_NC_13}, as $\prh \eq \sum_{\ver \in \Vh} \pra^\ver$, where $\pra^\ver$ is the discrete minimizer of Corollary~\ref{cor_H1_ext} given by~\eqref{eq_sah} for interior vertices and similarly the discrete minimizer of Corollary~\ref{cor_H1_ext_gen} for boundary vertices. We choose the polynomial degree $p$ of our theory to be $p \eq p'+1$ and we set for all $\ver \in \Vh$
\bse\label{eq_tau_r_pot}\bat{2}
    \tau_p & \eq \psi_\ver \uh \qquad & & \mbox{ in } \, \oma,\\
    r_\sd & \eq \psi_\ver \jump{\uh}_\sd \qquad & & \mbox{ on all } \, \sd \in
    \Fain,\\
    r_\sd & \eq 0 \qquad & & \mbox{ on all } \, \sd \in
    \Fab,\\
    r_\sd & \eq \psi_\ver(\uh - \uuD) \qquad & & \mbox{ on all } \, \sd \in
    \FaD,\\
    \uu_\ver^{\mathrm{D}} & \eq \psi_\ver \uuD \qquad & & \mbox{ on all } \, \sd \in \FaD.
\eat\ese
By construction, the polynomial data satisfy the compatibility conditions~\eqref{eq_conds_H1} and~\eqref{eq_conds_H1_gen}.
Similarly, following~\cite{Dest_Met_expl_err_CFE_99},
\cite{Braess_Scho_a_post_edge_08, Brae_Pill_Sch_p_rob_09},
and~\cite[Construction~3.4 and Remark~3.7]{Ern_Voh_p_rob_15}, we define the
equilibrated flux reconstruction as $\frh \eq \sum_{\ver \in \Vh}
\fra^\ver$, where $\fra^\ver$ is the discrete minimizer of Corollary~\ref{cor_Hdv_ext} given by~\eqref{eq_fah} for interior vertices
and similarly the discrete minimizer of Corollary~\ref{cor_Hdv_ext_gen} for boundary vertices, with the polynomial degree $p \eq p'$. Here we set, for all $\ver \in \Vh$,
\bse\label{eq_tau_r_fl}\bat{2}
    \btau_p & \eq \psi_\ver \Grb \uh \qquad & & \mbox{ in } \, \oma, \\
    r_\elm & \eq \psi_\ver (f + \Lapb \uh) \qquad & & \mbox{ in all } \, \elm \in \Ta,\\
    r_\sd & \eq \psi_\ver \jump{\Grb \uh} \scp \tn_\sd \qquad & & \mbox{ on all } \, \sd \in
    \Fain,\\
    r_\sd & \eq 0 \qquad & & \mbox{ on all } \, \sd \in
    \Fab,\\
    r_\sd & \eq \psi_\ver(\Grb \uh \scp \tn_\sd + \uuN) \qquad & & \mbox{ on all } \, \sd \in \FaN,\\
    \sigma_\ver^{\mathrm{N}} & \eq \psi_\ver \uuN \qquad & & \mbox{ on all } \, \sd \in \FaN.
\eat\ese
For all interior vertices and for those boundary vertices which are only
shared by Neumann faces (\ie, $\FaD=\emptyset$), the hat-function orthogonality
\be \label{eq_FE_comp}
    (\Grb \uh, \Gr \psi_\ver)_\oma = (f, \psi_\ver)_\oma - (\uuN, \psi_\ver)_{\pt \oma \cap \GN}
\ee
is a equivalent to the data compatibility conditions~\eqref{eq_comp} and~\eqref{eq_comp_gen}. This relation is not verified, for example, for certain
discontinuous Galerkin methods; the use of the discrete gradient $\Grd
\uh$ from \cite[Section~4.3]{Di_Pietr_Ern_book_12} in place of the broken gradient $\Grb \uh$ allows to fix this,
see~\cite{Ern_Voh_p_rob_15, Dol_Ern_Voh_hp_16}. This altogether leads to:

\bc[Guaranteed a posteriori error estimate] \label{cor_rel} Let the data in problem~\eqref{eq_Lap} satisfy $f \in \PP_{p'-1}(\Th)$, $\uuD \in \PP_{p'}(\FD) \cap C^0(\GD)$, and $\uuN \in \PP_{p'-1}(\FN)$, $p' \geq 1$.
Let $\uu \in \Ho$ such that $\uu|_{\GD} = \uuD$ be the weak solution of~\eqref{eq_Lapl_VF_inh}. Let $\uh \in \PP_{p'}(\Th)$ satisfying~\eqref{eq_FE_comp} be arbitrary. Let $\prh \eq \sum_{\ver \in \Vh} \pra^\ver$, where $\pra^\ver$ is the discrete minimizer of Corollary~\ref{cor_H1_ext} or~\ref{cor_H1_ext_gen} with $p = p'+1$ and data~\eqref{eq_tau_r_pot}. Let $\frh \eq \sum_{\ver \in \Vh} \fra^\ver$, where $\fra^\ver$ is the discrete minimizer of Corollary~\ref{cor_Hdv_ext} or~\ref{cor_Hdv_ext_gen} with $p = p'$ and data~\eqref{eq_tau_r_fl}. Then
\[
    \norm{\Grb(\uu - \uh)}_{\Om}^2 \leq \sum_{\elm \in \Th} \big(
    \norm{\Grb \uh + \frh}_\elm^2 + \norm{\Grb (\uh - \prh)}_\elm^2\big),
\]
as well as
\ban
    \norm{\Grb(\uu - \uh)}_{\Om}^2 + \sum_{\sd \in \Fhint \cup \FD} h_\sd^{-1} \norm{\Pi_\sd^0 \jump{\uu - \uh}}_\sd^2 \leq {} & \sum_{\elm \in \Th} \big(\norm{\Grb \uh + \frh}_\elm^2 + \norm{\Grb (\uh - \prh)}_\elm^2\big)\\
    & + \sum_{\sd \in \Fhint} h_\sd^{-1} \norm{\Pi_\sd^0 \jump{\uh}}_\sd^2
    + \sum_{\sd \in \FD} h_\sd^{-1} \norm{\Pi_\sd^0 (\uh - \uuD) }_\sd^2.
\ean
\ec

\bp See~\cite[Theorem~3.3]{Ern_Voh_p_rob_15}
or~\cite[Theorem~3.3]{Dol_Ern_Voh_hp_16} and the references therein.\ep

Let
\bse \label{eq_Hsa} \bat{2}
    \Hsa & \eq \{v \in \Hoi{\oma}; \, (v, 1)_\oma = 0\},
        \qquad \qquad \quad & & \ver \in \Vhint \text{ or } \big(\ver \in \Vhext \text{ and } \FaD = \emptyset \big),  \\
    \Hsa & \eq \{v \in \Hoi{\oma}; \, v = 0 \text{ on all }
        \sd \in \FaD\}, & & \ver \in \Vhext \text{ and } \FaD \neq \emptyset.
\eat \ese
Then the Poincar\'e(--Friedrichs) inequality states that
\[
    \norm{v}_\oma \leq C_{\mathrm{PF},\oma} h_\oma \norm{\Gr v}_\oma \qquad \forall v \in \Hsa.
\]
Similarly, the broken Poincar\'e(--Friedrichs) inequality~\cite{Brenner_Poin_disc_03, Voh_Poinc_disc_05} states that
\[
    \norm{v}_\oma \leq C_{\mathrm{bPF}, \oma} h_\oma \Biggl[\norm{\Grb v}_\oma + \Biggl\{\sum_{\sd \in \Fain \cup \FaD} h_\sd^{-1} \norm{\Pi_\sd^0 \jump{v}}_\sd^2 \Biggr\}^\ft\Biggr]
\]
for all $v \in \HTa$ such that $(v,1)_\oma = 1$ if $\ver \in \Vhint$, all $v \in \HTa$ such that $\sum_{\sd \in \FaN} (v, 1)_\sd = 0$ if $\ver \in \Vhext$ and
$\FaD = \emptyset$, and all $v \in \HTa$ if $\ver \in \Vhext$ and
$\FaD \neq \emptyset$.
Let $C_{\mathrm{cont,PF}} \eq \max_{\ver \in \Vh} \{1 +
C_{\mathrm{PF},\oma} h_\oma \norm{\Gr \psi_\ver}_{\infty,\oma}\}$ and $C_{\mathrm{cont,bPF}} \eq \max_{\ver \in \Vh} \{1 +
C_{\mathrm{bPF},\oma} h_\oma \norm{\Gr \psi_\ver}_{\infty,\oma}\}$, where both constants only depend on the shape-regularity parameter $\gamma_{\Th}$.
Then we have
\be \label{eq_contPF}
    \norm{\Gr(\psi_\ver v)}_\oma \leq C_{\mathrm{cont,PF}} \norm{\Gr v}_\oma \qquad \forall v \in \Hsa,
\ee
see~\cite{Brae_Pill_Sch_p_rob_09} or~\cite[Lemma~3.12]{Ern_Voh_p_rob_15}, and, similarly,
\be \label{eq_contbPF} \begin{split}
    \norm{\Grb(\psi_\ver v)}_\oma
    \leq {} & C_{\mathrm{cont,bPF}} \Bigg[ \norm{\Grb v}_\oma + \Biggl\{\sum_{\sd \in \Fain \cup \FaD} h_\sd^{-1} \norm{\Pi_\sd^0 \jump{v}}_\sd^2 \Biggr\}^\ft\Bigg],
\end{split}
\ee
for all $v \in \HTa$ with the above constraints. Let $\VK$ stand for the vertices of the element $\elm$. The crucial application of our results is:

\bc[Local efficiency and polynomial-degree robustness] \label{cor_eff} For the estimators of Corollary~\ref{cor_rel}, the following holds true:
\bse \ba
    \norm{\Grb (\uh - \prh)}_\elm \leq {} & C_{\rm st} C_{\mathrm{cont,bPF}} \sum_{\ver \in \VK} \Biggl[ \norm{\Grb (\uu - \uh)}_\oma \nn \\
     & + \Biggl\{\sum_{\sd \in \Fain \cup \FaD} h_\sd^{-1} \norm{\Pi_\sd^0 \jump{\uu - \uh}}_\sd^2 \Biggr\}^\ft \Biggr] \qquad \forall \elm \in \Th, \label{eq_eff_H1}\\
    \norm{\Grb \uh + \frh}_\elm \leq {} & C_{\rm st} C_{\mathrm{cont,PF}} \sum_{\ver \in \VK} \norm{\Grb(\uu - \uh)}_\oma \qquad \forall \elm \in \Th,\label{eq_eff_Hdv}\\
    h_\sd^\mft \norm{\Pi_\sd^0 \jump{\uh}}_\sd = {} & h_\sd^\mft \norm{\Pi_\sd^0 \jump{\uu - \uh}}_\sd \qquad \forall \sd \in \Fhint, \label{eq_eff_jmps_int}\\
    h_\sd^\mft \norm{\Pi_\sd^0 (\uh - \uuD) }_\sd = {} & h_\sd^\mft \norm{\Pi_\sd^0 \jump{\uu - \uh}}_\sd \qquad \forall \sd \in \FD. \label{eq_eff_jmps_GD}
\ea \ese
\ec

\bp Corollary~\ref{cor_H1_ext} for interior
vertices and Corollary~\ref{cor_H1_ext_gen} for boundary vertices immediately give, for any $\ver \in \Vh$,
\[
    \norm{\Grb (\psi_\ver \uh - \pra^\ver)}_\oma \leq C_{\rm st}
    \min_{\substack{v \in \Hoi{\oma}\\ v|_\sd = 0 \,\, \forall \sd \in \Fab\\
    v|_\sd = \psi_\ver \uuD \,\, \forall \sd \in \FaD}}
    \norm{\Grb (\psi_\ver \uh - v)}_\oma \leq C_{\rm st} \inf_{\substack{v \in \Hoi{\oma}\\
    v|_\sd = \uuD \,\, \forall \sd \in \FaD}}
    \norm{\Grb (\psi_\ver(\uh - v))}_\oma.
\]
Indeed, the right inequality follows immediately as any function $v \in
\Hoi{\oma}$, equal to $\uuD$ on the faces from $\FaD$, belongs to the
minimization set of the middle term above when multiplied by the hat function
$\psi_\ver$. This means that the discrete fully computable estimator
$\norm{\Grb (\psi_\ver \uh - \pra^\ver)}_\oma$
is a local lower bound for a $\psi_\ver$-weighted distance
to the $\Hoi{\oma}$ space (or an affine subspace if $\FaD\ne\emptyset$).
We now make the weak solution $\uu$
of~\eqref{eq_Lap} appear in the bound. For $\ver \in \Vhint$, let $\tilde \uu
\eq \uu - c_\ver$, where the constant $c_\ver$ is chosen so that $(\tilde
u, 1)_\oma = (\uh, 1)_\oma$. For a boundary vertex $\ver \in \Vhext$ such
that $\FaD = \emptyset$, let $\tilde \uu \eq \uu - c_\ver$, where the
constant $c_\ver$ is chosen so that $\sum_{\sd \in \FaN} (\tilde u, 1)_\sd
= \sum_{\sd \in \FaN} (\uh, 1)_\sd$. In the other situations, we let $\tilde
\uu \eq \uu$. Note that in all three cases, $\Gr \tilde u = \Gr u$ on the patch $\oma$ and $\jump{\tilde u} = \jump{u}$ on all the faces $\sd \in \Fain$. Then, using~\eqref{eq_contbPF} for $v = \uh - \tilde \uu$ together with $\norm{\Grb (\uh - \prh)}_\elm \leq \sum_{\ver \in \VK} \norm{\Grb (\psi_\ver \uh - \pra^\ver)}_\oma$, we obtain~\eqref{eq_eff_H1}.
Note that if the mean values of the jumps of $\uh$ are zero, \ie,
$(\jump{\uh},1)_\sd = 0$ for all the faces $\sd \in \Fhint$ and
$(\uh,1)_\sd = (\uuD,1)_\sd$ for all the Dirichlet faces $\sd \in \FD$,
\eqref{eq_eff_H1} actually simplifies to
\[
    \norm{\Grb (\uh - \prh)}_\elm
    \leq C_{\rm st} C_{\mathrm{cont,bPF}} \sum_{\ver \in \VK} \norm{\Grb (\uu - \uh)}_\oma.
\]

Corollary~\ref{cor_Hdv_ext} for interior vertices and Corollary~\ref{cor_Hdv_ext_gen} for boundary vertices, in conjunction with Corollary~\ref{cor_Hdv_ext_equiv}, yield, for any $\ver \in \Vh$,
\ban
    \norm{\psi_\ver \Grb \uh + \fra^\ver}_\oma
    & \leq C_{\rm st} \max_{\substack{v \in \Hsa\\ \norm{\Gr v}_\oma = 1}}
    \left\{\sum_{\elm \in \Ta} (r_\elm, v)_\elm - \sum_{\sd \in
    \Fa} (r_\sd, v)_\sd\right\} \\
    & = C_{\rm st} \max_{\substack{v \in \Hsa\\ \norm{\Gr v}_\oma = 1}}
    \left\{(f, \psi_\ver v)_\oma - (\Grb \uh, \Gr(\psi_\ver v))_\oma - \sum_{\sd \in \FaN}(\uuN,\psi_\ver v)_\sd \right\},
\ean
using the definition of $r_\elm$ and $r_\sd$ in~\eqref{eq_tau_r_fl} and
the Green formula. Thus, the fully computable estimator $\norm{\psi_\ver \Grb
\uh + \fra^\ver}_\oma$ is a local lower bound for the local dual norm of
the residual with $\psi_\ver$-weighted test functions. We now note that
$\psi_\ver v$, extended by zero outside of the patch subdomain $\oma$, is a function in $\Ho$
which is zero on the Dirichlet part of the boundary $\GD$. Thus we can use the definition~\eqref{eq_Lapl_VF_inh} of the weak solution to replace the right-hand side by $(\Grb (\uu - \uh), \Gr(\psi_\ver v))_\oma$. Invoking~\eqref{eq_contPF} together with $\norm{\Grb \uh + \frh}_\elm \leq \sum_{\ver \in \VK} \norm{\psi_\ver \Grb \uh + \fra^\ver}_\oma$ gives~\eqref{eq_eff_Hdv}.

Finally, \eqref{eq_eff_jmps_int} and~\eqref{eq_eff_jmps_GD} are immediate by definition. \ep

\section{Proof for broken $H^1$ polynomial extensions}
\label{sec:proof_pot}

We prove here Theorem~\ref{thm_H1_ext}. In particular, we show in~Section~\ref{sec:nonempty_pot} the existence of the minimizers in~\eqref{eq_H1_ext}, in Section~\ref{sec:un_pot} their uniqueness, and in~Section~\ref{sec:stab_pot} the stability bound~\eqref{eq_H1_ext}. Let
$p \ge 1$ and let $r \in \PP_{p}(\Fain)$ satisfy the compatibility
conditions~\eqref{eq_conds_H1}. Define
\bse \label{eq:spaces_min_pot}\ba
    V_p(\Ta) &\eq\{ v_p\in \PP_p(\Ta);~ v_p|_\sd=0\,\, \forall \sd\in\Fab,\,\jump{v_p}_\sd = r_\sd \,\, \forall \sd \in \Fain\},\\
    V(\Ta) &\eq\{ v\in H^1(\Ta);~ v|_\sd=0\,\, \forall \sd\in\Fab,\,\jump{v}_\sd = r_\sd \,\, \forall \sd \in \Fain\}.
    \ea\ese
Then the stability bound~\eqref{eq_H1_ext} becomes
\be
    \min_{v_p \in V_p(\Ta)} \norm{\Grb v_p}_\oma
    \leq C_{\rm st} \min_{v \in V(\Ta)} \norm{\Grb v}_\oma.
\ee
To prove it, we crucially consider the enumeration of the cells in the patch $\Ta$ from Lemma~\ref{lem:int_patch_enum} below in the form $\elm_1,\ldots,\elm_{|\Ta|}$. Without loss of generality (see Remark~\ref{rem:orient}), we orient all the interior faces $\sd=\partial \elm_i\cap \partial \elm_j\in \Fain$ so that $\tn_\sd$ points from $\elm_j$ to $\elm_i$ with $j<i$.

In what follows, we abbreviate as $A\lesssim B$ the inequality $A\le cB$ with a generic constant $c$ whose value can only depend on the patch regularity parameter $\gamma_{\Ta}$; the constant $C$ is in particular independent of the polynomial degree $p$.

\subsection{Existence of the minimizers}
\label{sec:nonempty_pot}

Let us first prove that the minimization sets $V_p(\Ta)$ and $V(\Ta)$ are
non-empty; then the existence of the minimizers immediately follows. Since
$V_p(\Ta)\subset V(\Ta)$, we only consider $V_p(\Ta)$. The proof is
constructive in that we build a function in $V_p(\Ta)$ by enumerating all the
cells in $\Ta$ while prescribing suitable Dirichlet data on some faces of
each cell. For all $1\le i\le |\Ta|$, let us set $\sd_i\upb\eq\pt
\elm_i\cap\pt\oma$, \ie, $\sd_i\upb$ is the face of $\elm_i$ lying on the patch subdomain boundary $\pt\oma$. Note that $\sd_i\upb\in \Fab$. Consider a function $w_p\in\PP_p(\Ta)$ such that its restrictions
$w_p^i \eq w_p|_{\elm_i}$, for all $1\leq i\leq |\Ta|$, are defined by induction
as follows:
\bse \label{eq_sp_h1_rec} \begin{enumerate}[(i)]
\item For $i=1$, $w_p^1$ is any function in \be V_p(\elm_1)\eq\{v_p\in
\PP_p(\elm_1);~ v_p|_{\sd_1\upb}=0\}. \ee
\item For all $1<i\le|\Ta|$, $w_p^i$ is any function in
\be \label{eq:VpKi} V_p(\elm_i)\eq\{v_p\in \PP_p(\elm_i);~ v_p|_{\sd_i\upb}=0,\, v_p|_\sd =
-r_\sd+w_p^j|_\sd \, \forall \sd\in \F_i^\sharp\},
\ee
where $j=j(i,\sd)$ is the index of the cell sharing $\sd$ with $\elm_i$, \ie,
$\sd=\partial \elm_i\cap\partial \elm_j$. Recall that by definition of
the set of previously enumerated faces $\F_i^\sharp$ in
Appendix~\ref{app_graphs}, we have $j<i$, so that $w_p^j$ is already known
from a previous step of the construction.
\end{enumerate} \ese
Lemma~\ref{lem:nonempty_H1} below shows that the (affine) subspaces $V_p(\elm_{i})$ are all non-empty, \ie, the above construction is meaningful. Then, it is easy to see that any function $w_p$ constructed as above is in the discrete minimization set $V_p(\Ta)$; in particular, we note that the prescription~\eqref{eq:VpKi} on the faces in $\F_i^\sharp$ implies that $\jump{w_p}_\sd=w_p^j|_\sd-w_p^i|_\sd = r_\sd$.

\begin{lemma}[Non-emptiness] \label{lem:nonempty_H1}
For all $1\leq i\leq |\Ta|$, the set $V_p(K_i)$ is non-empty.
\end{lemma}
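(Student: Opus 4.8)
The plan is to verify, for each $i$, that the Dirichlet data prescribed on the faces of $\elm_i$ in the definition of $V_p(\elm_i)$ are mutually compatible along every edge shared by two prescribed faces, and then to invoke the standard fact that compatible piecewise-polynomial trace data of degree $p$ on a union of faces of a simplex always extend to a function in $\PP_p(\elm_i)$ (surjectivity of the trace map; this is in the spirit of the single-tetrahedron considerations of Appendix~\ref{app_ext_tetra}). For $i=1$ only $\sd_1\upb$ carries data, namely the value $0$, so $V_p(\elm_1)$ trivially contains the zero polynomial. For $i>1$ the prescribed faces are $\sd_i\upb$ (value $0$) and the faces $\sd\in\F_i^\sharp$ (value $-r_\sd+w_p^j|_\sd$, with $j=j(i,\sd)<i$). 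Since $\ver$ is interior, $\sd_i\upb$ is the face of $\elm_i$ opposite $\ver$, while every face in $\F_i^\sharp$ contains $\ver$; hence the edges to be checked are of two kinds: (a) $\edg'=\sd_i\upb\cap\sd$ with $\sd\in\F_i^\sharp$, and (b) a spoke $\edg\in\Ea$ arising as $\edg=\sd\cap\sd'$ with $\sd,\sd'\in\F_i^\sharp$. Compatibility along edges forces compatibility at the shared vertices, so these are the only conditions to check.

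For the edges of kind (a), I would argue that both prescriptions vanish on $\edg'$. The face $\sd_i\upb$ carries the value $0$ by definition. On the side of $\sd$, one has $\edg'\subset\sd\cap\pt\oma$, so $r_\sd|_{\edg'}=0$ by the compatibility condition~\eqref{eq_conds_H1_1}; moreover $\edg'$ is an edge of $\elm_j$ lying on $\pt\oma$, hence (the only face of $\elm_j$ on $\pt\oma$ being the one opposite $\ver$) an edge of $\sd_j\upb$, on which $w_p^j$ vanishes by construction. Thus $(-r_\sd+w_p^j)|_{\edg'}=0$ as well, and the two prescriptions agree.

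The edges of kind (b) are the genuine difficulty, and this is where the enumeration of Lemma~\ref{lem:int_patch_enum} is essential. I would run an induction whose invariant states that, after constructing $w_p^1,\dots,w_p^i$, on every interior edge $\edg$ the already-assigned values $w_p^k|_\edg$ (for $\elm_k\in\Te$ with $k\le i$) are consistent with the jump data, in the sense that the difference $w_p^a|_\edg-w_p^b|_\edg$ between two already-processed cells of $\Te$ joined by a path of already-enumerated faces in $\Fe$ equals the corresponding $\iota$-signed sum of the $r_\sd|_\edg$ along that path. The cells of $\Te$ form a cyclic fan around $\edg$, and the key combinatorial property furnished by the shelling enumeration (Appendix~\ref{app_graphs}) is that the processed cells of this fan always form a single contiguous arc; consequently a cell $\elm_i$ having \emph{both} of its $\edg$-faces $\sd,\sd'$ in $\F_i^\sharp$ can only be the last cell of the fan, \ie\ the one closing the full loop.

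When such a closing cell $\elm_i$ is reached, the neighbours $\elm_a,\elm_b$ across $\sd,\sd'$ are the two ends of the arc comprising all the remaining cells of $\Fe$. By the invariant, $w_p^a|_\edg-w_p^b|_\edg$ equals the signed sum of $r_\sd|_\edg$ over $\Fe\setminus\{\sd,\sd'\}$; combining this with the full-loop compatibility~\eqref{eq_conds_H1_2}, a direct bookkeeping of the orientations $\iota_{\sd,\edg}$ and of the jump convention $\jump{w_p}_\sd=w_p^j-w_p^i=r_\sd$ yields $(-r_\sd+w_p^a)|_\edg=(-r_{\sd'}+w_p^b)|_\edg$, which is exactly the required compatibility, and the invariant is restored with the whole fan of $\edg$ now covered. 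If instead $\elm_i$ has only one $\edg$-face in $\F_i^\sharp$, then $w_p^i|_\edg$ is fixed by that single prescription and the invariant propagates trivially, whereas if it has none the value is unconstrained at this step. Having thus secured compatibility on all edges of kinds (a) and (b), the extension exists and $V_p(\elm_i)\neq\emptyset$. The main obstacle is precisely isolating and exploiting the arc-contiguity of the edge fans supplied by the shelling order: without it, a loop could be closed prematurely, which would demand a partial-sum identity not implied by~\eqref{eq_conds_H1_2}.
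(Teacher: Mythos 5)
Your proof is correct and follows essentially the same route as the paper's: boundary edges are handled via~\eqref{eq_conds_H1_1} together with the vanishing of $w_p^j$ on $\pt\oma$, while the spoke edges use exactly Lemma~\ref{lem:int_patch_enum}\eqref{en_en_edg} (the cell with two faces in $\F_i^\sharp$ closes the fan around $\edg$), telescoping of the already-enforced jumps as in~\eqref{eq_comp_el}, and the loop compatibility~\eqref{eq_conds_H1_2}, with existence of the extension then following from the Lagrange-node argument. The only cosmetic differences are that you organize by edge type rather than by the cases $|\F_i^\sharp|\in\{1,2,3\}$ and leave the final sign bookkeeping (the paper's $\iota_{\sd_i^1,\edg}=-1$, $\iota_{\sd_i^2,\edg}=1$) implicit, which is harmless.
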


\bp The proof is carried out by induction.\\
(1) First, the linear space $V_p(\elm_1)$ is non-trivial. Its dimension is actually equal to the number of Lagrange nodes of order $p$ in the tetrahedron $\elm_1$ that are not located on the face $\sd_1\upb$.\\
(2) Let now $1<i\le|\Ta|$ and suppose that $V_p(\elm_{i-1})$ is non-empty. To prove that $V_p(\elm_i)$ is non-empty, we need
to verify that the prescribed values on the faces of $\elm_i$ are continuous
across the edges they share. Once this is established, it will follow that
$V_p(\elm_i)$ is an affine space whose tangent space has dimension equal to
the number of Lagrange nodes of order $p$ in $\elm_i$ not located in the
faces of $\elm_i$ where a value is prescribed. We distinguish three cases.
\\
(2.a) Case $1<i<|\Ta|$ and $|\F_i^\sharp|=1$, say $\F_i^\sharp=\{\sd_i^1\}$.
There is one edge to consider, namely $\edg=\sd_i\upb\cap \sd_i^1$. The
compatibility condition~\eqref{eq_conds_H1_1} implies that $r_\sd|_\edg=0$, and
we have by construction $w_p^j|_{\edg}=0$ since $\edg\subset \pt\oma$. Hence, the
value we are prescribing on the face $\sd_i^1$ restricted to the
edge $\edg$ is zero.
\\
(2.b) Case $1<i<|\Ta|$ and $|\F_i^\sharp|=2$, say
$\F_i^\sharp=\{\sd_i^1,\sd_i^2\}$ (note that $|\F_i^\sharp|<3$ if $i<|\Ta|$
owing to Lemma~\ref{lem:int_patch_enum}\eqref{en_en_ver}, although we do not
need to make use of this property here). There are three edges to consider,
namely $\edg^1=\sd_i\upb\cap \sd_i^1$, $\edg^2=\sd_i\upb\cap \sd_i^2$, and
$\edg^{12}=\sd_i^1\cap \sd_i^2$. For $\edg^1$ and $\edg^2$, the reasoning is the same as above. For $\edg\eq \edg^{12}$, we first use
Lemma~\ref{lem:int_patch_enum}\eqref{en_en_edg}, giving that $\elm_i$ is the last cell to be enumerated in the rotational path of cells around $\edg$. Thus $w_p$ has been already defined in the previous elements by the induction argument, and the algebraic properties of the jump operator (see~\eqref{eq:sum_edge_jumps}) give
\be \label{eq_comp_el}
    \sum_{\sd \in \Fe \setminus \{\sd_i^1,\sd_i^2\}} \iota_{\sd, \edg}\jump{w_p} = w_p^{j_1} - w_p^{j_2},
\ee
where $\sd_i^1=\partial \elm_i\cap \partial \elm_{j_1}$ and $\sd_i^2=\partial \elm_i\cap \partial \elm_{j_2}$, with $\elm_{j_1}$ following $\elm_i$ in the rotation direction around the edge $\edg$. Second, employing in~\eqref{eq_comp_el} $\jump{w_p}_\sd = r_\sd$  for all $\sd\in\Fe\setminus \{\sd_i^1,\sd_i^2\}$ and the compatibility condition~\eqref{eq_conds_H1_2}, we conclude that
\be \label{eq_jump_i1_i2}
    w_p^{j_1} - w_p^{j_2} = - \iota_{\sd_i^1, \edg} r_{\sd_i^1} -  \iota_{\sd_i^2, \edg} r_{\sd_i^2} = r_{\sd_i^1} -  r_{\sd_i^2}
\ee
on the edge $\edg$; the last equality also employs that $\iota_{\sd_i^1, \edg} = -1$ and $\iota_{\sd_i^2, \edg} = 1$ in the chosen notation (recall that both normal vectors $\tn_{\sd_i^1}$ and $\tn_{\sd_i^2}$ point inward $K_i$ as $j_1, j_2 < i$), see the right panel of Figure~\ref{fig:patch_int}. Equation~\eqref{eq_jump_i1_i2} provides the desired continuity property on $\edg$. \\
(2.c) Case $i=|\Ta|$. Then $\F_i^\sharp$ contains three faces, and the six
edges of $\elm_{|\Ta|}$ need to be considered. The reasoning is the same as
above for the three edges located on $\pt\oma$ and the three edges inside
$\oma$ (using again Lemma~\ref{lem:int_patch_enum}\eqref{en_en_edg} and the compatibility condition~\eqref{eq_conds_H1_2}).
\ep

\subsection{Uniqueness of the minimizers}
\label{sec:un_pot}

The uniqueness of the minimizers in~\eqref{eq_H1_ext} results from the fact
that $V_p(\Ta)$ and $V(\Ta)$ are convex sets (being affine spaces) and that the functional we are minimizing is strictly convex on the tangent
spaces of $V_p(\Ta)$ and $V(\Ta)$ (both tangent
spaces are composed of functions
vanishing on $\pt\oma$, where the $H^1$-seminorm defines a strictly convex
functional).

\subsection{Proof of the stability bound~\eqref{eq_H1_ext}}
\label{sec:stab_pot}

We now construct two functions $\zeta_p\in \PP_p(\Ta)$ and $\zeta\in
H^1(\Ta)$ such that their restrictions $\zeta_p^i \eq \zeta_p|_{\elm_i}$ and
$\zeta^i \eq \zeta|_{\elm_i}$, for all $1\le i\le |\Ta|$, are defined by
induction as follows:
\begin{enumerate}[(i)]
\item For $i=1$, we define the spaces \bse\ba
V_p(\elm_1)&\eq\{v_p\in \PP_p(\elm_1);~ v_p|_{\sd_1\upb}=0\}, \\
V(\elm_1)&\eq\{v\in H^1(\elm_1);~ v|_{\sd_1\upb}=0\}, \ea\ese and consider
the following unique minimizers:
\be
    \zeta_p^1 \eq \argmin_{v_p\in V_p(\elm_1)} \norm{\nabla v_p}_{\elm_1}, \qquad \zeta^1 \eq \argmin_{v\in V(\elm_1)} \norm{\nabla v}_{\elm_1}.
\ee
Note that these problems are
actually trivial, so that we have $\zeta_p^1=\zeta^1=0$ in $\elm_1$.
\item For all $1<i\le |\Ta|$, we define the spaces
\bse\ba V_p(\elm_i)&\eq\{v_p\in \PP_p(\elm_i);~ v_p|_{\sd_i\upb}=0,\,
v_p|_\sd = -r_\sd + \zeta_p^j|_\sd \, \forall \sd\in \F_i^\sharp\}, \\
V(\elm_i)&\eq\{v\in H^1(\elm_i);~ v|_{\sd_i\upb}=0,\, v|_\sd = -r_\sd +
\zeta_p^j|_\sd \, \forall \sd\in \F_i^\sharp\}, \ea\ese
where $j=j(i,\sd)$ is the index of the cell sharing $\sd$ with $\elm_i$,
\ie, $\sd=\partial \elm_i\cap\partial \elm_j$. Note that both spaces are
defined using the same Dirichlet data which are piecewise polynomials of degree $p$. Consider the following unique minimizers:
\be \label{eq_zeta_i} \zeta_p^i \eq \argmin_{v_p \in V_p(\elm_i)} \norm{\nabla v_p}_{\elm_i},
\qquad \zeta^i \eq \argmin_{v\in V(\elm_i)} \norm{\nabla v}_{\elm_i}. \ee
Note that the above minimization problems are well-posed since the
minimization sets are non-empty. Indeed, the Dirichlet condition is a
continuous, piecewise polynomial, as established in
Section~\ref{sec:nonempty_pot}. Moreover, the set of faces where a
Dirichlet condition is prescribed is always non-empty, and the
minimized functional is strictly convex. We can also observe that the
continuous minimizer $\zeta^i\in H^1(\elm_i)$ is the weak solution to the
problem
\bse \label{eq_aux_1} \bat{2}
    - \Lap \zeta^i & = 0 \qquad & \quad& \mbox{ in } \, \elm_i, \\
    \zeta^i|_\sd & = -r_\sd + \zeta_p^j|_\sd & & \mbox{ on all } \sd \in \F_i^\sharp, \\
    \zeta^i|_\sd & = 0 & & \mbox{ on } \sd_i\upb, \\
    -\Gr \zeta^i \scp \tn_{\elm_i}|_\sd & = 0 & & \mbox{ on all } \sd \in
    \F_i^\flat,
\eat \ese
whereas $\zeta_p^i$ is its (spectral) finite element approximation in $\PP_p (\elm_i)$.
\end{enumerate}
We will show the following two statements for all $1<i\le |\Ta|$ (the
statements being trivial for $i=1$):
\bse\label{eq:stab_all} \ba
\norm{\nabla \zeta_p^i}_{\elm_i} &\lesssim \norm{\nabla\zeta^i}_{\elm_i}, \label{eq:DGS} \\
\norm{\nabla \zeta^i}_{\elm_i} &\lesssim \norm{\nabla_\T v^*}_{\oma} + \sum_{j<i}
\norm{\nabla\zeta_p^j}_{\elm_j}, \label{eq:stab} \ea\ese
where the sum in~\eqref{eq:stab} is void if $i=1$ and
where $v^*\in V(\Ta)$ is the global continuous minimizer in~\eqref{eq_H1_ext}. Since the above inductive
construction implies that $\zeta_p \in V_p(\Ta)$, the global discrete minimizer
$v_p^*\in V_p(\Ta)$ in~\eqref{eq_H1_ext} is such that
\[
\norm{\nabla_\T v_p^*}_{\oma} \le \norm{\nabla_\T\zeta_p}_{\oma}.
\]
Moreover, combining~\eqref{eq:DGS} with~\eqref{eq:stab} proves by induction
that $\|\nabla \zeta_p^i\|_{\elm_i} \lesssim \|\nabla_\T v^*\|_{\oma}$ for
all $1\le i\le |\Ta|$, so that $\|\nabla_\T\zeta_p\|_{\oma} \lesssim
\|\nabla_\T v^*\|_{\oma}$. Hence, $\|\nabla_\T v_p^*\|_{\oma}\lesssim
\|\nabla_\T v^*\|_{\oma}$, and this concludes the proof of~\eqref{eq_H1_ext}.

\bp[Proof of~\eqref{eq:DGS}] We apply Lemma~\ref{lem_H1_simpl} on
$\elm=\elm_i$ with $\FKD = \{F_i\upb\} \cup \F_i^\sharp$ and the $p$-degree polynomials given by $0$ for $\sd=\sd_i\upb$ and
$-r_\sd + \zeta_p^j|_\sd$ for all $\sd\in\F_i^\sharp$. The proof that these
polynomials are continuous over the edges shared by two faces in $\FKD$
has been given in Section~\ref{sec:nonempty_pot}. \ep

\bp[Proof of~\eqref{eq:stab}] We distinguish three cases.
\\
(1) Case $1<i<|\Ta|$ and $|\F_i^\sharp|=1$, say $\F_i^\sharp=\{\sd\}$. Let
$\elm_j\in\Ta$ be the cell such that $\sd = \pt \elm_i \cap \pt \elm_j$; the
definition of $\F_i^\sharp$ implies that $j<i$. Let $\tT : \elm_j\to \elm_i$
be the (unique) bijective affine map leaving $\sd$ pointwise invariant.
Consider in the cell $\elm_i$ the function
\[
v\eq
v^*|_{\elm_i} - (v^*|_{\elm_j}-\zeta_p^j) \circ \tT^{-1}.
\]
The crucial observation is that $v\in V(\elm_i)$. Indeed, the properties
$v\in H^1(\elm_i)$ and $v|_{\sd_i\upb}=0$ are straightforward to verify.
Moreover, since $j<i$, we have
$v^*|_{\elm_j}-v^*|_{\elm_i}=\jump{v^*}_\sd=r_\sd$, so that indeed $v|_\sd =
-r_\sd + \zeta_p^j|_\sd$. Using the definition~\eqref{eq_zeta_i} of $\zeta^i$ together with
the properties of the map $\tT$ which follow from mesh regularity, we infer
that
\begin{align*}
\|\Gr\zeta^i\|_{\elm_i} &\le \|\Gr v\|_{\elm_i} \le
\|\Gr v^*\|_{\elm_i} + \|\Gr((v^*|_{\elm_j}-\zeta_p^j) \circ \tT^{-1})\|_{\elm_i} \\
& \lesssim \|\Gr v^*\|_{\elm_i} + \|\Gr(v^*|_{\elm_j}-\zeta_p^j)\|_{\elm_j}
\leq \|\Gr v^*\|_{\elm_i} + \|\Gr v^*\|_{\elm_j} + \|\Gr \zeta_p^j\|_{\elm_j},
\end{align*}
so that~\eqref{eq:stab} holds true (recall that $j<i$).
\\
(2) Case $1<i<|\Ta|$ and $|\F_i^\sharp|=2$, say $\F_i^\sharp=\{\sd^1,\sd^2\}$
with $\edg=\sd^1\cap \sd^2$. We consider the conforming refinement $\T_\edg'$ of
$\T_\edg$ from Lemma~\ref{lem:2color} applied with $\elm_*=\elm_i$ (observe that the partition $\T_\edg'$ may add one element with respect to $\T_\edg$). Recall that
$\T_\edg'$ contains $\elm_i$, that all the tetrahedra in $\T_\edg'$ have $\edg$ as
edge with their two other vertices lying on $\pt\oma$, that the shape regularities of $\T_\edg'$ and $\T_\edg$ are comparable, and that, collecting
all the vertices of $\T_\edg'$ that are not endpoints of $\edg$ in the set $\V_\edg'$, there
is a two-color map $\mathtt{col} : \V_\edg' \to \{1,2\}$ so that for all
$\kappa\in \T_\edg'$, the two vertices of $\kappa$ that are not endpoints of
$\edg$, say $\{\ver_\kappa^n\}_{1\leq n\leq 2}$, satisfy
$\mathtt{col}(\ver_\kappa^n)=n$. We use the two-color map to define, for all
$\kappa\in \T_\edg'$, the (unique) bijective affine map $\tT_\kappa : \kappa \to
\elm_i$ leaving $\edg$ pointwise invariant and preserving the color of the two
other vertices of $\kappa$, \ie, $\mathtt{col}(\tT_{\kappa}(\ver_\kappa^n)) = n$ for
each $n\in\{1,2\}$. Consider in the cell $\elm_i$ a function $v$ defined
from the global continuous minimizer $v^*$ in $\elm_i$ and from its
difference with the piecewise polynomial $\zeta_p$ on the previously enumerated
elements by
\be \label{eq_v_pot}
    v \eq v^*|_{\elm_i} +
    \sum_{\kappa\in\T_\edg'\setminus\{\elm_i\}}
    \frac{\epsilon_\kappa}{\epsilon_{\elm_i}} (v^*-\zeta_p)|_{\kappa}\circ
    \tT_\kappa^{-1},
\ee
where for all $\kappa\in \T_{\edg}'$ (thus including $\elm_i$), $\epsilon_\kappa=1$ if the orientation of the vector
$\ver_\kappa^1\ver_\kappa^2$ is compatible with the fixed orientation of
rotation around the edge $\edg$ and $\epsilon_\kappa=-1$ otherwise. Note
that the binary coloring implies that $\epsilon_\kappa+\epsilon_{\kappa'}=0$
if the two cells $\kappa$ and $\kappa'$ share an interior face. Let us verify
that $v\in V(\elm_i)$. The properties $v\in H^1(\elm_i)$ and
$v|_{\sd_i\upb}=0$ are again straightforward to verify. It remains to show
that $v|_\sd = -r_\sd + \zeta_p^j|_\sd$ for all $\sd\in \F_i^\sharp$. We
will do the proof for the face $\sd^1$; the proof for the face $\sd^2$ is
similar. Let $\tx\in \sd^1$, and assume without loss of generality that the
vertex of $\sd^1$ that is not in $\edg$ has color $1$. Let $\F_\edg^1$ collect all
the interior faces in $\T_\edg'$ whose vertex that is not in $\edg$ has color 1.
Then, $\sd^1\in \F_\edg^1$ and $\sd^2\not\in \F_\edg^1$. Moreover, the definition
of $\tT_\kappa$ implies that, for all $\kappa\in \T_\edg'$, $\kappa\ne \elm_i$,
the point $\tT_\kappa^{-1}(\tx)$ belongs to the face of $\kappa$ in the set
$\F_\edg^1$. Recall that $\epsilon_\kappa$ has opposite sign on the two cells
sharing each interior face. As a result, we find that the function
$v$ in~\eqref{eq_v_pot} satisfies
\be \label{eq_v_pot_dev}
    v|_{\sd^1} = v^*|_{\elm_i}|_{\sd^1} - (v^*|_{\elm_{j_1}}-\zeta_p^{j_1})|_{\sd^1} \circ \tT_{\elm_{j_1}}^{-1}
    + \sum_{\sd\in\F_\edg^1\setminus\{\sd^1\}} \epsilon_\sd (\jump{v^*}_\sd-\jump{\zeta_p}_\sd)|_{\sd} \circ \tT_{\kappa_\sd}^{-1},
\ee
where $\elm_{j_1}$ is the cell sharing $\sd^1$ with $\elm_i$, \ie,
$\sd^1=\partial \elm_i \cap \partial \elm_{j_1}$, whereas
$\epsilon_\sd=\epsilon_{\kappa_\sd}/\epsilon_{\elm_i} = \pm1$ where
$\kappa_\sd$ is the element sharing $\sd$ having the lowest enumeration
index. Since $\jump{v^*}_\sd=\jump{\zeta_p}_\sd$ for all $\sd\in\F_\edg^1$ (the
common value being $r_\sd$ if $\sd$ is already a face in $\T_\edg$ or zero if
$\sd$ is a newly created face in $\T_\edg'$) and since $j_1<i$, this yields
\[
    v|_{\sd^1} = v^*|_{\elm_i}|_{\sd^1} - (v^*|_{\elm_{j_1}}-
    \zeta_p^{j_1})|_{\sd^1} = -r_{\sd^1}|_{\sd^1}+\zeta_p^{j_1}|_{\sd^1}.
\]
Hence, $v\in V(\elm_i)$. In view of~\eqref{eq_v_pot} and~\eqref{eq_zeta_i}, we conclude that
\[
\|\Gr\zeta^i\|_{\elm_i} \le \|\Gr v\|_{\elm_i}
\lesssim \|\Gr v^*\|_{\elm_i} + \sum_{\kappa\in\T_\edg'\setminus\{\elm_i\}} \{\|\Gr v^*\|_{\kappa} + \|\Gr \zeta_p\|_{\kappa}\}
\le |\T_\edg'|^\ft\|\nabla_\T v^*\|_{\oma} + 2^\ft\sum_{j<i} \|\nabla\zeta_p\|_{\elm_j},
\]
and~\eqref{eq:stab} follows.\\
(3) Case $i=|\Ta|$. Note that owing to Lemma~\ref{lem:int_patch_enum}\eqref{en_en_ver}, this is the only case where $|\F_i^\sharp|=3$ happens, so that we need to work with all the three interior faces of the element $\elm_i$. For this purpose, we apply Lemma~\ref{lem:3color} with $\elm_*=\elm_i$.
Recall that $\T_\ver'$ contains $\elm_i$, that all the tetrahedra in
$\T_\ver'$ have $\ver$ as vertex with their three other vertices lying on
$\pt\oma$, and that, collecting all the vertices of $\T_\ver'$ that lie on
$\pt\oma$ in the set $\V_\ver'$, there is a three-color map $\mathtt{col} :
\V_\ver' \to \{1,2,3\}$ so that for all $\kappa\in \T_\ver'$, the three
vertices of $\kappa$ that are not $\ver$, say $\{\ver_\kappa^n\}_{1\leq n\leq
3}$, satisfy $\mathtt{col}(\ver_\kappa^n)=n$. We use the three-color map to
define, for all $\kappa\in \T_\ver'$, the (unique) bijective affine map
$\tT_\kappa : \kappa \to \elm_i$ leaving $\ver$ invariant and preserving the
color of the three other vertices of $\kappa$. Consider in the cell $ \elm_i$
the function
\be \label{eq_map_3col}
    v \eq v^*|_{\elm_i} + \sum_{\kappa\in \T_\ver'\setminus\{\elm_i\}}
    \frac{\epsilon_\kappa}{\epsilon_{\elm_i}} (v^*-\zeta_p)|_\kappa \circ \tT_\kappa^{-1},
\ee
where, for all $\kappa\in\T_\ver'$, $\epsilon_\kappa=1$ if the vector
$\ver_\kappa^1\ver_\kappa^2 {\times} \ver_\kappa^1\ver_\kappa^3$ points
outward $\oma$ and $\epsilon_\kappa=-1$ otherwise. Let us verify that $v\in
V(\elm_i)$. The properties $v\in H^1(\elm_i)$ and $v|_{\sd_{|\Ta|}\upb}=0$
are straightforward. It remains to verify that $v$ satisfies the appropriate
boundary condition on the three faces in $\sd_i^\sharp$, \ie, on the three
faces of $\elm_i$ sharing the vertex $\ver$. We can call these faces
$\sd^{12}$, $\sd^{13}$, and $\sd^{23}$, where the superscripts refer to the
two colors of the two vertices of the face that are not $\ver$. Let us verify
the boundary condition on $\sd^{12}$; the proof for the two other faces is
similar. Let $\F^{12}$ collect all the interior faces in $\T_\ver'$ such that
their two vertices which are not $\ver$ have colors 1 and 2. Since any
interior face in $\F^{12}$ is shared by two cells in $\T_\ver'$ having
opposite number $\epsilon_\kappa$ and since any cell in $\T_\ver'$ has one
interior face in $\F^{12}$, we infer that
\[
v|_{\sd^{12}} = v^*|_{\elm_i} - (v^*|_{\elm_{j_{12}}}-\zeta_p^{j_{12}})|_{\sd^{12}} \circ \tT_{\elm_{j_{12}}}^{-1}
+ \sum_{\sd\in\F^{12}\setminus\{\sd^{12}\}} \epsilon_\sd (\jump{v^*}_\sd-\jump{\zeta_p}_\sd)|_{\sd} \circ \tT_{\kappa_\sd}^{-1},
\]
with $\epsilon_\sd$ and $\kappa_\sd$ defined as above and where $j_{12}$ is the index of the
cell sharing $\sd^{12}$ with $\elm_i$. Since
$\jump{v^*}_\sd=\jump{\zeta_p}_\sd$ (the common value being either $r_\sd$ or
0) and since $j_{12}<i$, we conclude that $v|_{\sd^{12}} =
-r_{\sd^{12}}|_{\sd^{12}} + \zeta_p^{j_{12}}|_{\sd^{12}}$. Hence,
$v\in V(\elm_i)$. Finally, we can bound $\|\Gr v\|_{\elm_i}$ as above, and
this completes the proof. \ep

\section{Proof for broken $\tH(\dv)$ polynomial extensions}
\label{sec:proof_flux}

We prove here Theorem~\ref{thm_Hdv_ext}. In particular, we show in~Section~\ref{sec:flux_nonempty} the existence of the minimizers in~\eqref{eq_Hdv_ext}, in Section~\ref{sec:flux_un} their uniqueness, and in~Section~\ref{sec:proof_bnd_flux} the stability bound~\eqref{eq_Hdv_ext}. Let $p \ge 0$. Let $r \in \PP_p(\Ta) \times \PP_{p}(\Fa)$ satisfy the compatibility condition~\eqref{eq_comp}. Let us set
\bse \ba \tV_p(\Ta) \eq \{ &\tv_p \in \RTN_p(\Ta);~
    \tv_p \scp \tn_\sd = r_\sd \,\, \forall \sd \in \Fab, \,
    \Dvb \tv_p|_\elm = r_\elm \,\, \forall \elm \in \Ta, \nn \\
    &\jump{\tv_p}\scp \tn_\sd = r_\sd \,\, \forall \sd \in \Fain\}, \\
\tV(\Ta) \eq \{ &\tv \in \HdvTa;~
    \tv \scp \tn_\sd = r_\sd \,\, \forall \sd \in \Fab,\,
    \Dvb \tv|_\elm = r_\elm \,\, \forall \elm \in \Ta,\nn \\
    &\jump{\tv}\scp \tn_\sd = r_\sd \,\, \forall \sd \in \Fain\}.
\ea \ese
Then the stability bound~\eqref{eq_Hdv_ext} becomes
\be \min_{\tv_p \in
\tV_p(\Ta)} \norm{\tv_p}_\oma
    \leq C_{\rm st} \min_{\tv \in \tV(\Ta)} \norm{\tv}_\oma.
\ee
As in Section~\ref{sec:proof_pot}, we consider the enumeration of the cells in
$\Ta$ from Lemma~\ref{lem:int_patch_enum} in the form
$\elm_1,\ldots,\elm_{|\Ta|}$. Without loss of generality (see
Remark~\ref{rem:orient}), we orient all the interior faces
$\sd=\partial \elm_i\cap \partial \elm_j\in \Fain$ so that $\tn_\sd$ points
from $\elm_j$ to $\elm_i$ with $j<i$.

\subsection{Existence of the minimizers}
\label{sec:flux_nonempty}

Let us first prove that the minimization sets $\tV_p(\Ta)$ and $\tV(\Ta)$ are
non-empty, yielding the existence of the minimizers. Since $\tV_p(\Ta)\subset
\tV(\Ta)$, we only consider $\tV_p(\Ta)$. Recall that, for all $1\le i\le
|\Ta|$, $\sd_i\upb=\pt \elm_i\cap\pt\oma$ is the face of the element $\elm_i$ lying on the patch boundary $\pt\oma$. Consider a function $\tw_p\in\RTN_p(\Ta)$ such that its
restrictions $\tw_p^i \eq \tw_p|_{\elm_i}$, for all $1\leq i\leq |\Ta|$, are
defined by induction as follows:

\bse\label{eq:spaces_H_dv}\begin{enumerate}[(i)]
\item For $i=1$, $\tw_p^1$ is any function in \be \tV_p(\elm_1)\eq\{\tv_p\in
\RTN_p(\elm_1);~ \tv_p\scp\tn_{\sd_1\upb}=r_{\sd_1\upb}, \,\, \Dv\tv_p =
r_{\elm_1}\}. \ee
\item For all $1<i\le|\Ta|$, $\tw_p^i$ is any function in
\be \label{eq:tVpKi}
\tV_p(\elm_i)\eq\{\tv_p\in \RTN_p(\elm_i);~
\tv_p\scp\tn_{\sd_i\upb}=r_{\sd_i\upb},\, \Dv\tv_p = r_{\elm_i},\,
\tv_p\scp\tn_\sd = -r_\sd+\tw_p^j\scp\tn_\sd \, \forall \sd\in
\F_i^\sharp\},
\ee
where $j=j(i,\sd)$ is the index of the cell sharing
$\sd$ with $\elm_i$, \ie, $\sd=\partial \elm_i\cap\partial \elm_j$. Recall
that by definition of $\F_i^\sharp$, we have $j<i$, so that $\tw_p^j$ is already
known from a previous step.
\end{enumerate}\ese

Lemma~\ref{lem:nonempty_Hdiv} below shows that the (affine) subspaces $\tV_p(K_i)$ are all non-empty, \ie, the above construction is meaningful. Then, it is easy to see that any function $\tw_p$ constructed as above is in the discrete minimization set $\tV_p(\Ta)$; in particular, we note that the prescription~\eqref{eq:tVpKi} on the faces in $\F_i^\sharp$ implies that $\jump{\tw_p}\scp\tn_\sd = (\tw_p^j-\tw_p^i)\scp\tn_\sd = r_\sd$.

\begin{lemma}[Non-emptiness] \label{lem:nonempty_Hdiv}
For all $1\leq i\leq |\Ta|$, the set $\tV_p(K_i)$ is non-empty.
\end{lemma}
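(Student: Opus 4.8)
The plan is to argue by induction on $i$, following the same scheme as Lemma~\ref{lem:nonempty_H1}, but with the edge-continuity checks replaced by the divergence-theorem (Gauss) compatibility, which is the only obstruction to prescribing normal traces together with a divergence inside a single $\RTN_p$ cell. The building block I would record first is the elementary single-cell fact: for a tetrahedron $\elm$ with faces $\FK$, a nonempty subset $\FKN\subseteq\FK$, and data $g_\sd\in\PP_p(\sd)$ on $\sd\in\FKN$ together with $r\in\PP_p(\elm)$, there is $\tv_p\in\RTN_p(\elm)$ with $\Dv\tv_p=r$ and $\tv_p\scp\tn_\sd=g_\sd$ on $\FKN$, provided either $\FKN\subsetneq\FK$, or $\FKN=\FK$ together with the Gauss relation $(r,1)_\elm=\sum_{\sd\in\FK}(g_\sd,\tn_\sd\scp\tn_\elm)_\sd$. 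This follows from the $\RTN_p$ degrees of freedom (the normal-trace map onto $\PP_p(\FK)$ is surjective, the interior moments being free) combined with a right inverse of the divergence on the zero-normal-trace subspace of $\RTN_p(\elm)$, \cite[Corollary~3.4]{Cost_McInt_Bog_Poinc_10}: set a function with the prescribed normal traces and zero interior moments, then correct its divergence by a zero-normal-trace field whose divergence is the (mean-zero) defect. When a free face is available the flux imbalance is absorbed by a constant normal trace there, so no compatibility is needed; equivalently one may invoke Lemma~\ref{lem_Hdv_simpl}.

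For $i=1$ we have $\F_1^\sharp=\emptyset$, so only $\sd_1\upb$ carries a prescribed normal trace and the three faces meeting $\ver$ remain free; the free-face case gives $\tV_p(\elm_1)\neq\emptyset$. For $1<i<|\Ta|$, Lemma~\ref{lem:int_patch_enum}\eqref{en_en_ver} yields $|\F_i^\sharp|\le 2$; since $\elm_i$ has exactly one boundary face $\sd_i\upb$ (the face opposite $\ver$) and three interior faces (those containing $\ver$), at least one interior face lies in $\F_i^\flat$ and is free, while the data $g_\sd=-r_\sd+\tw_p^j\scp\tn_\sd$ on $\F_i^\sharp$ are available from the earlier steps $j<i$. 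The free-face case again gives $\tV_p(\elm_i)\neq\emptyset$.

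The only genuine obstruction is the last cell $i=|\Ta|$, where $\F_{|\Ta|}^\sharp$ consists of all three interior faces, so $\FKN=\FK$ and the Gauss relation must be verified. Here I would sum the elementwise identities $(r_{\elm_k},1)_{\elm_k}=\sum_{\sd\in\F_{\elm_k}}(\tw_p^k\scp\tn_\sd,\tn_\sd\scp\tn_{\elm_k})_\sd$ over all previously built cells $k<|\Ta|$. With the chosen orientation ($\tn_\sd\scp\tn_{\elm_i}=-1$ and $\tn_\sd\scp\tn_{\elm_j}=+1$ on $\sd=\pt\elm_i\cap\pt\elm_j$, $j<i$), every fully interior face telescopes to $(\jump{\tw_p}\scp\tn_\sd,1)_\sd=(r_\sd,1)_\sd$, every boundary face other than $\sd_{|\Ta|}\upb$ contributes $(r_\sd,1)_\sd$, and every $\sd\in\F_{|\Ta|}^\sharp$ contributes the single term $(\tw_p^j\scp\tn_\sd,1)_\sd$. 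Subtracting this from~\eqref{eq_comp}, rewritten as $\sum_{k<|\Ta|}(r_{\elm_k},1)_{\elm_k}=\sum_{\sd\in\Fa}(r_\sd,1)_\sd-(r_{\elm_{|\Ta|}},1)_{\elm_{|\Ta|}}$, the boundary and fully-interior contributions cancel and what remains is exactly
\be
    (r_{\elm_{|\Ta|}},1)_{\elm_{|\Ta|}} = (r_{\sd_{|\Ta|}\upb},1)_{\sd_{|\Ta|}\upb} - \sum_{\sd\in\F_{|\Ta|}^\sharp} (-r_\sd+\tw_p^j\scp\tn_\sd,1)_\sd,
\ee
which is precisely the Gauss relation for the all-faces case with $g_{\sd_{|\Ta|}\upb}=r_{\sd_{|\Ta|}\upb}$ (outward sign $+1$) and $g_\sd=-r_\sd+\tw_p^j\scp\tn_\sd$ on $\F_{|\Ta|}^\sharp$ (sign $-1$). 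Hence $\tV_p(\elm_{|\Ta|})\neq\emptyset$, closing the induction. The step I expect to require the most care is this last one: keeping the orientation signs $\tn_\sd\scp\tn_\elm=\pm1$ straight, correctly partitioning $\Fa$ into boundary faces, fully interior faces, and faces of $\F_{|\Ta|}^\sharp$, and verifying that the interior fluxes telescope through the prescribed jumps so that the global condition~\eqref{eq_comp} collapses to the single remaining elemental identity.
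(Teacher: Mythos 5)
Your proof is correct and follows essentially the same route as the paper's: Lemma~\ref{lem:int_patch_enum}\eqref{en_en_ver} guarantees that full prescription of the normal trace occurs only at $i=|\Ta|$ (so all earlier cells have a free face and need no compatibility), and the Gauss condition on the last cell is verified exactly as in the paper, by summing the divergence theorem over the previously built cells, telescoping the interior fluxes through the prescribed jumps $\jump{\tw_p}\scp\tn_\sd=r_\sd$, and invoking~\eqref{eq_comp}. The only cosmetic difference is that you invoke \cite[Corollary~3.4]{Cost_McInt_Bog_Poinc_10} for the single-cell building block, where the paper simply appeals to classical properties of Raviart--Thomas--N\'ed\'elec elements; $p$-robust stability is not needed for mere non-emptiness.
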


\bp The proof is actually simpler than that of Lemma~\ref{lem:nonempty_H1} in the $H^1$-setting in Section~\ref{sec:nonempty_pot}, as Neumann boundary data in~\eqref{eq:spaces_H_dv} do not request any condition of continuity on edges between faces of each $\elm_i$. Thus, property~\eqref{en_en_edg} of Lemma~\ref{lem:int_patch_enum} is not used here. The only non-trivial property to verify is the compatibility between the prescriptions of the normal component and the divergence whenever $\F_i^\sharp \cup \{\sd_i\upb\}=\F_{\elm_i}$, \ie, whenever the normal component is prescribed over the whole boundary of $\elm_i$. Here, it is important that this situation only happens when $i=|\Ta|$, \ie, for the last cell in the enumeration; this is indeed the case owing to
Lemma~\ref{lem:int_patch_enum}\eqref{en_en_ver}. Then the non-emptiness of $\tV_p(\elm_i)$ follows from classical properties of Raviart--Thomas--N\'ed\'elec finite elements. Let thus $i=|\Ta|$. We need to
check the Neumann compatibility condition
\be\label{eq_comp_last}
    (r_{\elm_i},1)_{\elm_i} = (r_{\sd_i\upb},1)_{\sd_i\upb} + \sum_{\sd\in\F_i^\sharp}
    (r_\sd-\tw_p^j\scp\tn_\sd,1)_\sd.
\ee
(Note that $\tn_\sd$ points inward $\elm_i$ for all $\sd\in\F_i^\sharp$ since $i=|\Ta|$.) Using the divergence theorem in each cell $\elm_j$, $1\le
j<|\Ta|$, we write
\ban \sum_{1\le j<|\Ta|} (r_{\elm_j},1)_{\elm_j} &= \sum_{1\le j<|\Ta|}
(\Dv\tw_p^j,1)_{\elm_j}
= \sum_{1\le j<|\Ta|} \sum_{\sd\in\F_{\elm_j}} (\tw_p^j\scp \tn_{\elm_j},1)_\sd \\
&= \sum_{1\le j<|\Ta|} (r_{\sd_j\upb},1)_{\sd_j\upb}
+ \sum_{\sd\in\Fain\setminus\F_i^\sharp}(\jump{\tw_p}\scp\tn_\sd,1)_\sd + \sum_{\sd\in \F_i^\sharp} (\tw_p^j\scp \tn_\sd,1)_\sd \\
&= \sum_{1\le j<|\Ta|} (r_{\sd_j\upb},1)_{\sd_j\upb} +
\sum_{\sd\in\Fain\setminus\F_i^\sharp}(r_\sd,1)_\sd + \sum_{\sd\in
\F_i^\sharp} (\tw_p^j\scp \tn_\sd,1)_\sd, \ean
where $\tn_{\elm_j}$ is the unit normal pointing outward $\elm_j$.
Then~\eqref{eq_comp_last} follows by combining the above relation with the compatibility
condition~\eqref{eq_comp}.
\ep

\subsection{Uniqueness of the minimizers}
\label{sec:flux_un}

As the affine subspaces $\tV_p(\Ta)$ and $\tV(\Ta)$ are non-empty convex sets, the
uniqueness of the minimizers in~\eqref{eq_Hdv_ext} follows from the fact that
the functional we are minimizing is strictly convex on the tangent spaces
(both tangent spaces are composed of divergence-free functions, so that the
$\|{\cdot}\|_\oma =\|{\cdot}\|_{\HdvTa}$ for such functions).

\subsection{Proof of the stability bound~\eqref{eq_Hdv_ext}}
\label{sec:proof_bnd_flux}

We now construct two functions $\bzeta_p\in \RTN_p(\Ta)$ and $\bzeta\in
\HdvTa$ such that their restrictions $\bzeta_p^i \eq \bzeta_p|_{\elm_i}$ and
$\bzeta^i \eq \bzeta|_{\elm_i}$, for all $1\le i\le |\Ta|$, are defined by
induction as follows:
\begin{enumerate}[(i)]
\item For $i=1$, we define the spaces \bse\ba \tV_p(\elm_1)&\eq\{\tv_p\in
\RTN_p(\elm_1);~ \tv_p\scp\tn_{\sd_1\upb}=r_{\sd_1\upb},
\,\, \Dv\tv_p = r_{\elm_1}\}, \\
\tV(\elm_1)&\eq\{\tv\in \Hdvi{\elm_1};~
\tv\scp\tn_{\sd_1\upb}=r_{\sd_1\upb}, \,\, \Dv\tv = r_{\elm_1}\}, \ea\ese
and consider the following unique minimizers:
\be \bzeta_p^1 \eq
\argmin_{\tv_p\in \tV_p(\elm_1)} \|\tv_p\|_{\elm_1}, \qquad \bzeta^1 \eq
\argmin_{\tv\in \tV(\elm_1)} \|\tv\|_{\elm_1}. \ee
\item For all $1<i\le |\Ta|$, we define the spaces
\bse\ba \tV_p(\elm_i)&\eq\{\tv_p\in \RTN_p(\elm_i);~
\tv_p\scp\tn_{\sd_i\upb}=r_{\sd_i\upb}, \,\, \Dv\tv_p = r_{\elm_i},\,
\tv_p\scp\tn_\sd = -r_\sd+\bzeta_p^j\scp\tn_\sd \, \forall \sd\in \F_i^\sharp\}, \\
\tV(\elm_i)&\eq\{\tv\in \Hdvi{\elm_i};~
\tv\scp\tn_{\sd_i\upb}=r_{\sd_i\upb}, \,\, \Dv\tv = r_{\elm_i},\,
\tv\scp\tn_\sd = -r_\sd+\bzeta_p^j\scp\tn_\sd \, \forall \sd\in
\F_i^\sharp\}, \label{eq_VKi} \ea\ese
where $j=j(i,\sd)$ is the index of the cell sharing $\sd$ with $\elm_i$,
\ie, $\sd=\partial \elm_i\cap\partial \elm_j$. In~\eqref{eq_VKi}, the normal trace is prescribed according to~\eqref{eq:def_vn_K}. Consider the following unique minimizers:
\be \label{eq_elm_min_Hdv} \bzeta_p^i \eq \argmin_{\tv_p\in \tV_p(\elm_i)} \|\tv_p\|_{\elm_i}, \qquad \bzeta^i \eq \argmin_{\tv\in \tV(\elm_i)} \|\tv\|_{\elm_i}. \ee
\end{enumerate}
The same reasoning as in Sections~\ref{sec:flux_nonempty} and~\ref{sec:flux_un} shows that these
minimization problems are well-posed. We can also observe that the
continuous minimizer $\bzeta^i$ of~\eqref{eq_elm_min_Hdv} is given by $- \Gr
\zeta^i$, where $\zeta^i \in H^1(\elm_i)$ is the weak solution to the problem
\batn{2}
    - \Lap \zeta^i & = r_{\elm_i} \qquad & \quad& \mbox{ in } \, \elm_i, \\
    -\Gr \zeta^i \scp \tn_\sd & = -r_\sd+\bzeta_p^j\scp\tn_\sd & & \mbox{ on all } \sd \in
    \F_i^\sharp,\\
    -\Gr \zeta^i \scp \tn_{\sd_i\upb} & = r_{\sd_i\upb} & & \mbox{ on } \sd_i\upb,\\
    \zeta^i|_\sd & = 0 & & \mbox{ on all } \sd \in \F_i^\flat,
\eatn
whereas $\bzeta_p^i$ is its (spectral) mixed finite element approximation in $\RTN_p$.

We will show the following two statements for all $1\le i\le |\Ta|$: \bse\ba
\|\bzeta_p^i\|_{\elm_i} &\lesssim \|\bzeta^i\|_{\elm_i}, \label{eq:DGS_flux} \\
\|\bzeta^i\|_{\elm_i} &\lesssim \|\tv^*\|_{\oma} + \sum_{j<i}
\|\bzeta_p^j\|_{\elm_j}, \label{eq:stab_flux} \ea\ese where the sum
in~\eqref{eq:stab_flux} is void for $i=1$ and where $\tv^*\in \tV(\Ta)$ is
the global minimizer in~\eqref{eq_Hdv_ext}. With these two bounds, we can
conclude the proof as in~Section~\ref{sec:stab_pot}.

\bp[Proof of~\eqref{eq:DGS_flux}] We apply Lemma~\ref{lem_Hdv_simpl} on
$\elm=\elm_i$ with $\FKN=\{\sd_i\upb\}\cup\F_i^\sharp$, the $p$-degree
polynomial prescribing the divergence being $r_{\elm_i}$, and the $p$-degree
polynomials prescribing the normal components being $r_{\sd_i\upb}$ for
$\sd=\sd_i\upb$ and $r_\sd-\bzeta_p^j\scp\tn_\sd$ for all $\sd\in
\F_i^\sharp$ (recall that $\tn_\sd$ points inward $\elm_i$ since
$\sd=\partial \elm_j\cap\partial \elm_i$ with $j<i$). The compatibility
condition for these polynomials on the last element follows by the same reasoning as
in~Section~\ref{sec:flux_nonempty}. \ep

\bp[Proof of~\eqref{eq:stab_flux}] The principle of the proof is the same as
that of~\eqref{eq:stab} in~Section~\ref{sec:stab_pot}, the only salient difference
being that the pullback by the geometric map has to be replaced by the
contravariant Piola transformation. Let us exemplify this modification in the
case where $1<i<|\Ta|$ and $|\F_i^\sharp|=1$, say $\F_i^\sharp=\{\sd\}$. Let
$\elm_j\in\Ta$ be the cell such that $\sd = \pt \elm_i \cap \pt \elm_j$; the
definition of $\F_i^\sharp$ implies that $j<i$. Let $\tT : \elm_j\to \elm_i$
be the (unique) bijective affine map leaving $\sd$ pointwise invariant. Let
$\Jac$ be the (constant) Jacobian matrix of $\tT$ and consider the
transformation $\piola(\tv) = \tA(\tv\circ \tT)$ with $\tA =
\det(\Jac)\Jac^{-1}$. Then $\piola$ is an isomorphism from $\Hdvi{\elm_i}$ to
$\Hdvi{\elm_j}$, and also from $\RTN_p(\elm_i)$ to $\RTN_p(\elm_j)$. Consider
in the cell $\elm_i$ the function
\be \label{eq_def_vi_1}
\tv\eq
\tv^*|_{\elm_i} - \piola^{-1}(\tv^*|_{\elm_j}-\bzeta_p^j),
\ee
and let us prove that $\tv\in \tV(\elm_i)$. It is clear that $\tv\in
\Hdvi{\elm_i}$. Concerning the divergence, we use the property $\Dv
\piola^{-1}(\tv_j) = \det(\Jac)^{-1} (\Dv\tv_j)\circ \tT^{-1}$ in $\elm_i$
for any function $\tv_j$ defined in $\elm_j$. Applying this identity to the
function $\tv_j = \tv^*|_{\elm_j}-\bzeta_p^j$ which is divergence-free (since
$j<i$), we infer that $\Dv\tv = \Dv\tv^*|_{\elm_i} = r_{\elm_i}$. Let us now
consider the normal component of $\tv$. Recalling~\eqref{eq:def_vn_K}
and that $\tn_\sd$ points inward $\elm_i$, we need to prove that
\be \label{eq:2b_proved_Hdv}
(\tv,\Gr\phi)_{K_i} = -(r_{K_i},\phi)_{K_i} + (r_{\sd_i\upb},\phi)_{\sd_i\upb}
+ (r_\sd-\bzeta_p^j\scp\tn_\sd,\phi)_\sd,
\ee
for all $\phi\in H^1(\elm_i)$ such that $\phi|_\sd=0$ for all $\sd\in \F_i^\flat$. Let $\wtphi : \oma \to\RR$ be such that $\wtphi|_{K_i}=\phi$, $\wtphi|_{K_j}=\phi\circ\tT$, and $\wtphi = 0$ otherwise, and observe that $\wtphi\in H^1(\oma)$. Using $\wtphi$ as the test function in~\eqref{eq_norm_jump_def} for the global minimizer $\tv^*$, we infer that
\[
(\tv^*,\Gr\phi)_{K_i} + (\tv^*,\Gr(\phi\circ\tT))_{K_j} = -(r_{K_i},\phi)_{K_i}
-(r_{K_j},\phi\circ\tT)_{K_j} + (r_{\sd_i\upb},\phi)_{\sd_i\upb} + (r_{\sd_j\upb},\phi\circ\tT)_{\sd_j\upb} + (r_\sd,\phi)_\sd.
\]
Considering the term $(\tv,\Gr\phi)_{K_i}$, the definition~\eqref{eq_def_vi_1}, changing variables in the last term in the right-hand side, and employing $\epsilon_\Jac =
\frac{\det(\Jac)}{|\det(\Jac)|}=-1$, we obtain
\ban
(\tv,\Gr\phi)_{K_i} &= (\tv^*,\Gr\phi)_{K_i} - (\piola^{-1}(\tv^*|_{\elm_j}-\bzeta_p^j),\Gr\phi)_{K_i} \\
& = (\tv^*,\Gr\phi)_{K_i} + (\tv^*,\Gr(\phi\circ\tT))_{K_j} - (\bzeta_p^j,\Gr(\phi\circ\tT))_{K_j} \\
& = (\tv^*,\Gr\phi)_{K_i} + (\tv^*,\Gr(\phi\circ\tT))_{K_j} + (r_{K_j},\phi\circ\tT)_{K_j}- (r_{\sd_j\upb},\phi\circ\tT)_{\sd_j\upb} - (\bzeta_p^j\scp\tn_\sd,\phi)_\sd \\
&= -(r_{K_i},\phi)_{K_i} + (r_{\sd_i\upb},\phi)_{\sd_i\upb}
+ (r_\sd-\bzeta_p^j\scp\tn_\sd,\phi)_\sd,
\ean
where we used the Green formula for the term $(\bzeta_p^j,\Gr(\phi\circ\tT))_{K_j}$ and the prescribed properties of $\bzeta_p^j$ together with the above relation satisfied by $\tv^*$. Hence, \eqref{eq:2b_proved_Hdv} holds true, and $\tv \in \tV(\elm_i)$ as announced.

The reasoning is similar when $\F_i^\sharp$ contains two or three interior
faces of $\elm_i$. Let us still briefly discuss the case where $1<i<|\Ta|$
and $|\F_i^\sharp|=2$, say $\F_i^\sharp=\{\sd^1,\sd^2\}$ with $\edg=\sd^1\cap
\sd^2$. We consider the two-color conforming refinement $\T_\edg'$ of the
rotational path $\T_\edg$ around $\edg$ of Lemma~\ref{lem:2color} below. Define in the cell $\elm_i$ the function
\be \label{eq_def_vi_2}
\tv \eq \tv^*|_{\elm_i} + \sum_{\kappa\in\T_\edg'\setminus\{\elm_i\}}
\frac{\epsilon_\kappa}{\epsilon_{\elm_i}} \piola_\kappa^{-1}((\tv^*-\bzeta_p)|_{\kappa}),
\ee
where we use the same notation as in~Section~\ref{sec:stab_pot}, together with the contravariant Piola transformation $\piola_\kappa$ built using the geometric map $\tT_\kappa$ with Jacobian matrix $\Jac_\kappa$. We need to show that $\tv \in \tV(\elm_i)$. It is again clear that $\tv\in \Hdvi{\elm_i}$ and similarly, remarking that the restrictions $(\tv^*-\bzeta_p)|_{\kappa}$ for all $\kappa\in\T_\edg'\setminus\{\elm_i\}$ are divergence-free, we infer that $\Dv\tv = \Dv\tv^*|_{\elm_i} = r_{\elm_i}$. We are thus left to prove that the normal components conditions in~\eqref{eq_VKi} are satisfied. Let $\sd^1 = \partial \elm_i \cap \partial \elm_{j_1}$ and $\sd^2 = \partial \elm_i \cap \partial \elm_{j_2}$, where we remark that $j_1, j_2 < i$. Using~\eqref{eq:def_vn_K}, we need to show that
\be \label{eq:2b_proved_Hdv_2}
(\tv,\Gr\phi)_{K_i} = -(r_{K_i},\phi)_{K_i} + (r_{\sd_i\upb},\phi)_{\sd_i\upb}
+ (r_{\sd^1}-\bzeta_p^{j_1}\scp\tn_{\sd^1},\phi)_{\sd^1} + (r_{\sd^2}-\bzeta_p^{j_2}\scp\tn_{\sd^2},\phi)_{\sd^2},
\ee
for all $\phi\in H^1(\elm_i)$ such that $\phi|_\sd=0$ for all $\sd\in \F_i^\flat$. Let now $\wtphi : \oma \to\RR$ be such that $\wtphi|_{K_i}=\phi$, $\wtphi|_{\kappa}=\phi\circ\tT_\kappa$, $\kappa\in\T_\edg'\setminus\{\elm_i\}$, and $\wtphi = 0$ otherwise. Observe that $\wtphi$ takes the value zero on the faces lying on the boundary of the submesh $\T_\edg'$ and sharing the vertex $\ver$ and is continuous over the faces in $\T_\edg'$ sharing the edge $\edg$. Consequently, $\wtphi\in H^1(\oma)$. Using as above $\wtphi$ as the test function in~\eqref{eq_norm_jump_def}, we see that
\be \label{eq_v*_2} \bs
(\tv^*,\Gr\phi)_{K_i} + \sum_{\kappa\in\T_\edg'\setminus\{\elm_i\}} (\tv^*,\Gr(\phi\circ\tT_\kappa))_{\kappa} = {} & -(r_{K_i},\phi)_{K_i} - \sum_{\kappa\in\T_\edg'\setminus\{\elm_i\}} (r_{\kappa},\phi\circ\tT_\kappa)_{\kappa}
+ (r_{\sd_i\upb},\phi)_{\sd_i\upb}\\
& {} + \sum_{\kappa\in\T_\edg'\setminus\{\elm_i\}} (r_{\sd_\kappa\upb},\phi\circ\tT_\kappa)_{\sd_\kappa\upb} + \sum_{\sd \in \Fe} (r_\sd,\phi \circ\tT_{\kappa_\sd})_\sd,
\es \ee
with the obvious association of $\sd_\kappa\upb$ with $\sd_j\upb$ and where $\kappa_\sd \in \T_\edg'$ is the element sharing $\sd$ having the lowest enumeration index. Employing the definition~\eqref{eq_def_vi_2}, changing variables, noting that $\epsilon_\kappa \epsilon_{\Jac_\kappa}$ is independent of the two cells sharing $\sd \in \Fe$ since $\tn_{\kappa}$ changes orientation, and using that the jumps of $\bzeta_p$ on the faces from $\Fe$ other than $\sd^1$ and $\sd^2$ are given by $r_\sd$ whereas the normal component of $\bzeta_p^j$ has zero jumps inside of the original simplices $\elm_j$ from $\Te$, we deduce
\ban
(\tv,\Gr\phi)_{K_i} = {} & (\tv^*,\Gr\phi)_{K_i} - \sum_{\kappa\in\T_\edg'\setminus\{\elm_i\}}\Big(
\frac{\epsilon_\kappa}{\epsilon_{\elm_i}} \piola_\kappa^{-1}((\tv^*-\bzeta_p)|_{\kappa}),\Gr\phi\Big)_{K_i} \\
 = {} & (\tv^*,\Gr\phi)_{K_i} + \sum_{\kappa\in\T_\edg'\setminus\{\elm_i\}} (\tv^*,\Gr(\phi\circ\tT_\kappa))_{\kappa} - \sum_{\kappa\in\T_\edg'\setminus\{\elm_i\}} (\bzeta_p,\Gr(\phi\circ\tT_\kappa))_{\kappa}\\
 = {} & (\tv^*,\Gr\phi)_{K_i} + \sum_{\kappa\in\T_\edg'\setminus\{\elm_i\}} (\tv^*,\Gr(\phi\circ\tT_\kappa))_{\kappa}
+ \sum_{\kappa\in\T_\edg'\setminus\{\elm_i\}} (r_{\kappa},\phi\circ\tT_\kappa)_{\kappa}\\
{} & - \sum_{\kappa\in\T_\edg'\setminus\{\elm_i\}} (r_{\sd_\kappa\upb},\phi\circ\tT_\kappa)_{\sd_\kappa\upb}
-\sum_{\sd \in \Fe \setminus\{\sd^1, \sd^2\}} (\jump{\bzeta_p}\scp\tn_\sd,\phi\circ\tT_{\kappa_\sd})_\sd\\
{} & - (\bzeta_p^{j_1}\scp\tn_{\sd^1},\phi\circ\tT_{\kappa_{\sd^1}})_{\sd^1} - (\bzeta_p^{j_2}\scp\tn_{\sd^2},\phi\circ\tT_{\kappa_{\sd^2}})_{\sd^2}\\
= {} & -(r_{K_i},\phi)_{K_i} + (r_{\sd_i\upb},\phi)_{\sd_i\upb}
+ (r_{\sd^1}-\bzeta_p^{j_1}\scp\tn_{\sd^1},\phi)_{\sd^1} + (r_{\sd^2}-\bzeta_p^{j_2}\scp\tn_{\sd^2},\phi)_{\sd^2},
\ean
where we have also employed that both $\tn_{\sd^1}$ and $\tn_{\sd^2}$ point inwards $\elm_i$, and, in the last step, \eqref{eq_v*_2}. Thus~\eqref{eq:2b_proved_Hdv_2} holds true. \ep

\section{Proofs for boundary vertices}
\label{sec:boundary_ver}

In this section, we prove Theorems~\ref{thm_H1_ext_gen} and~\ref{thm_Hdv_ext_gen}, by relating the case of a boundary patch to that of an interior patch via geometric piecewise affine mappings and symmetry arguments. Let $\ver$ be a boundary vertex as specified in Section~\ref{sec:boundary_ver_set}. Recall that by assumption, the patch domain $\oma$ only contains an open ball around $\ver$ minus some sector with solid angle $\theta_\ver\in(0,4\pi)$. In what follows, as throughout the paper, we abbreviate as $A\lesssim B$ the inequality $A\le cB$ with a generic constant $c$ whose value can only depend on the patch regularity parameter $\gamma_{\Ta}$.
Moreover, $A \approx B$ stands for simultaneously $A\le c B$ and $B\le c A$.

\subsection{$H^1$ setting}
\label{sec:bnd_ver_H1}

We prove in this section Theorem~\ref{thm_H1_ext_gen}. Recall that we have assumed that either $\FaN=\emptyset$ or $\FaD=\emptyset$. Moreover, either all the faces in $\Fab$ have at least one vertex lying in the interior of $\pt\omab$, see the left panel of Figure~\ref{fig:patch_bnd} for an example, or there are at most two tetrahedra in the patch, see the right panel of Figure~\ref{fig:patch_bnd} for an example.
We show the details under the first assumption and only give brief comments on the case where this does not hold but $|\Ta| \leq 2$.

When all the faces in $\Fab$ have at least one vertex lying in the interior of $\pt\omab$, it is possible to design a mapping $\tT$ of $\Ta$ into a flattened patch $\tTa$ lying in a half-space in $\RR^3$ determined by some plane containing the boundary vertex $\ver$. More precisely, the mapping $\tT$ is defined by means of a collection of bijective affine mappings $\{\tT_\elm\}_{\elm \in \Ta}$ such that (see the left panel of Figure~\ref{fig:patch_bnd_T_S} for an illustration in the context of Figure~\ref{fig:patch_bnd}):
1) $\tT_\elm : \elm \to \telm$, where $\telm$ is a tetrahedron;
2) the tetrahedra $\telm$ form a patch $\tTa$ topologically equivalent to $\Ta$ (i.e., the connectivity between elements, faces, edges, and vertices of $\tTa$ is the same as that in $\Ta$);
3) all the faces $\sd \in \FaD \cup \FaN$ are mapped into a plane $P$ and $\tTa$ lies in a half-space of $\RR^3$ bounded by this plane;
4) the shape-regularity parameter of $\tTa$ is equivalent to that of $\Ta$ up to a constant that only depends on $\gamma_{\Ta}$.

\begin{figure}[htb]
\centerline{\includegraphics[height=0.6\textwidth]{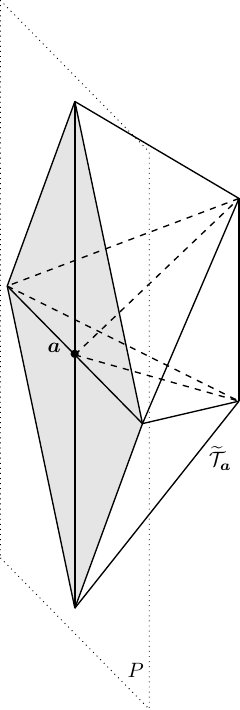} \qquad \qquad \includegraphics[height=0.6\textwidth]{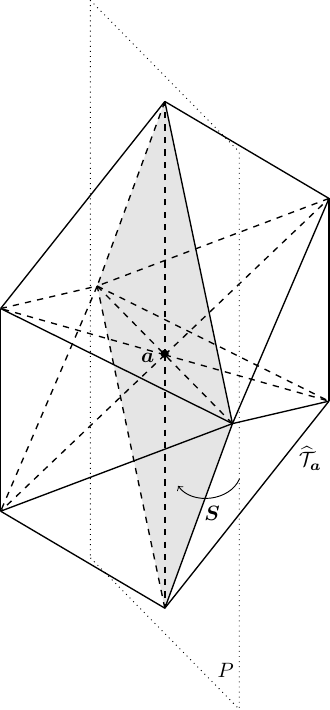}}
\caption{Left: boundary patch $\Ta$ of the left panel of Figure~\ref{fig:patch_bnd} mapped by $\tT$ to the flattened patch $\tTa$ lying in the half-space of $\RR^3$ determined by the plane $P$; right: patch $\tTa$ mapped by the symmetry $\tS$ over the plane $P$ and the resulting interior patch $\hTa$}
\label{fig:patch_bnd_T_S}
\end{figure}

Let us further map the flattened patch $\tTa$ by the symmetry $\tS$ over the plane $P$ into the other half-space of $\RR^3$ to produce together with $\tTa$ a new patch of elements denoted by $\hTa$; see the right panel of Figure~\ref{fig:patch_bnd_T_S} for an illustration.
We keep the orientation for the face normals $\tn_{\tsd}$ of faces $\tsd$ in $\tTa$ the same and let it arbitrary for the newly created faces in $\hTa$.
Then, $\hTa$ is an ``interior'' patch of tetrahedra sharing the vertex $\ver$ as described in Section~\ref{sec_set},
where in particular the patch subdomain $\homa$, defined as the interior of the set $\bigcup_{\helm\in\hTa} \helm$, contains an open ball around $\ver$.

\subsubsection{Case 1: pure Dirichlet conditions} \label{sec_bound_H1_Dir}

Let us first treat the case where $\FaN = \emptyset$, \ie, $\Fa = \Fain \cup \Fab \cup \FaD$ and all the faces located on the patch boundary $\pt \oma$ and sharing the vertex $\ver$ are contained in the (Dirichlet) set $\FaD$.
Denote
\bse \label{eq:spaces_min_pot_bound}\ba
    V_p(\Ta) &\eq \big\{ v_p\in \PP_p(\Ta);~ v_p|_\sd=0\,\, \forall \sd\in\Fab,\, v_p|_\sd = r_\sd\,\,\forall \sd\in\FaD,\,\jump{v_p}_\sd = r_\sd \,\, \forall \sd \in \Fain \big\},\\
    V(\Ta) & \eq \big\{ v\in H^1(\Ta);~ v|_\sd=0\,\, \forall \sd\in\Fab,\, v|_\sd = r_\sd\,\,\forall \sd\in\FaD,\,\jump{v}_\sd = r_\sd \,\, \forall \sd \in \Fain\big\}.
    \ea\ese
Then the stability bound~\eqref{eq_H1_ext_gen} becomes
\be \label{eq:min_pot_bound}
    \min_{v_p \in V_p(\Ta)} \norm{\Grb v_p}_\oma
    \leq C_{\rm st} \min_{v \in V(\Ta)} \norm{\Grb v}_\oma.
\ee
As in Section~\ref{sec:proof_pot}, we denote respectively by $v_p^*\in V_p(\Ta)$ and $v^*\in V(\Ta)$ the minimizers in~\eqref{eq:min_pot_bound}, supposing for the moment that they exist.
Considering on $\tTa$ the equivalents $\widetilde V_p(\tTa)$ and $\widetilde V(\tTa)$ of the spaces $V_p(\Ta)$ and $V(\Ta)$ from~\eqref{eq:spaces_min_pot_bound}, where $\widetilde r_{\widetilde \sd} \eq r_\sd \circ \tT_\elm^{-1}$ for $\sd \in \FK$, $\tsd \in \tFK$, $\telm = \tT_\elm (\elm)$, $\tsd = \tT_\elm (\sd)$,
one readily shows that
\be \label{eq_en_map}
    \norm{\Grbt \widetilde v_p^*}_\toma \approx \norm{\Grb v_p^*}_\oma,\qquad
    \norm{\Grbt \widetilde v^*}_\toma \approx \norm{\Grb v^*}_\oma,
\ee
where $\toma$ is the interior of the set $\bigcup_{\telm\in\tTa} \telm$ and
\be \label{eq:min_pot_bound_T}
    \widetilde v_p^* \eq \arg\min_{\widetilde v_p \in \widetilde V_p(\tTa)} \norm{\Grbt \widetilde v_p}_\toma, \quad
    \widetilde v^* \eq \arg\min_{\widetilde v \in \widetilde V(\tTa)} \norm{\Grbt \widetilde v}_\toma.
\ee
For instance, since $v^*\circ \tT^{-1} \in \widetilde V(\tTa)$, we have
$\norm{\Grbt \widetilde v^*}_\toma \le \norm{\Grbt (v^*\circ \tT^{-1})}_\toma
= \norm{\tJ^{\mathrm{T}}(\Grb v^*)\circ \tT^{-1}}_\toma \le
|\det(\tJ)|^{1/2} \|\tJ\|_{\ell^2} \norm{\Grb v^*}_\oma \lesssim \norm{\Grb v^*}_\oma$,
since the Jacobian matrix $\tJ$ of $\tT^{-1}$ and its determinant $\det(\tJ)$ are
both of order unity.



For each interior face $\hsd \in \hFain$ of the symmetrized patch $\hTa$, \cf\ Figure~\ref{fig:patch_bnd_T_S}, right, consider a polynomial $\widehat r_\hsd \in \PP_{p}(\hsd)$ such that
\bse\label{eq_r_ext_Dir}\bat{2}
    \widehat r_\hsd & = \widetilde r_\tsd \quad & & \text{ if } \hsd = \tsd \in \tFain, \\
    \widehat r_\hsd & = \widetilde r_\tsd & & \text{ if } \hsd = \tsd \in \tFaD, \\
    \widehat r_\hsd & = 0 & & \text{ if } \hsd = \tS (\tsd), \tsd \in \tFain,
\eat\ese
where, in the last line, more precisely, $\tsd \in \tFK$, $\hsd \in \hFK$, $\helm = \tS (\telm)$, $\hsd = \tS (\tsd)$.
Remark that, crucially, $\widehat r_\hsd|_{\hsd \cap \pt \homa} = 0$ on all interior faces $\hsd \in \hFain$ and $\sum_{\hsd \in \hFe} \iota_{\hsd, \hedg} \, \widehat r_\hsd|_\hedg = 0$ on all interior edges $\hedg \in \hEa$. Indeed, the former property follows by~\eqref{eq_conds_H1_1_gen}, whereas the latter property is trivial for edges only present in the extension by $\tS$ (not lying in the patch $\tTa$, to the left of the plane $P$ in the right panel of Figure~\ref{fig:patch_bnd_T_S}) and follows from~\eqref{eq_conds_H1_2_gen} together with our definition in~\eqref{eq_r_ext_Dir} for edges already present in $\tTa$. Thus, by the results of Sections~\ref{sec:nonempty_pot} and~\ref{sec:un_pot} for interior patches, there exist unique minimizers of the problems
\be \label{eq:min_pot_bound_S}
    \widehat v_p^* \eq \arg\min_{\widehat v_p \in \widehat V_p(\hTa)} \norm{\Grbh \widehat v_p}_\homa, \quad
    \widehat v^* \eq \arg\min_{\widehat v \in \widehat V(\hTa)} \norm{\Grbh \widehat v}_\homa,
\ee
where
\bse\ba
    \widehat V_p(\hTa) & \eq \big\{ \widehat v_p\in \PP_p(\hTa);~ \widehat v_p|_\hsd=0\,\, \forall \hsd\in\hFab,\, \jump{\widehat v_p}_\hsd = \widehat r_\hsd \,\, \forall \hsd \in \hFain\big\},\\
    \widehat V(\hTa) &\eq \big\{ \widehat v\in H^1(\hTa);~ \widehat v|_\hsd=0\,\, \forall \hsd\in\hFab,\, \jump{\widehat v}_\hsd = \widehat r_\hsd \,\, \forall \hsd \in \hFain\big\}.
\ea\ese
Moreover, as proved in Section~\ref{sec:stab_pot}, the claim~\eqref{eq_H1_ext} of Theorem~\ref{thm_H1_ext} holds true, \ie,
\be \label{eq:min_pot_bound_ext}
    \norm{\Grbh \widehat v_p^*}_\homa \lesssim \norm{\Grbh \widehat v^*}_\homa.
\ee

We now show that there exist unique minimizers in problems~\eqref{eq:min_pot_bound_T} and, a fortiori, in problems~\eqref{eq:min_pot_bound}. Consider $\widehat v_p^*$ given by~\eqref{eq:min_pot_bound_S} and define on $\toma$
\be \label{eq_H1_map}
    \widetilde v_p \eq \widehat v_p^*|_\toma - \widehat v_p^*|_{\homa \setminus \toma} \circ \tS.
\ee
Immediately, $\widetilde v_p \in \PP_p(\tTa)$; moreover, it is easy to verify that actually $\widetilde v_p \in \widetilde V_p(\tTa)$. Thus, the minimization sets in~\eqref{eq:min_pot_bound_T} are non-empty, and uniqueness of the minimizers follows as in Section~\ref{sec:un_pot}. From~\eqref{eq:min_pot_bound_T} and the definition~\eqref{eq_H1_map} of $\widetilde v_p$, we immediately conclude that
\be \label{eq_1_Dir}
     \norm{\Grbt \widetilde v_p^*}_\toma \leq \norm{\Grbt \widetilde v_p}_\toma \leq \sqrt 2 \norm{\Grbh \widehat v_p^*}_\homa.
\ee
Finally, let us extend the continuous minimizer $\widetilde v^*$ from~\eqref{eq:min_pot_bound_T} from $\toma$ to $\homa$ by zero, which we denote by $E(\widetilde v^*)$. Immediately, we have $E(\widetilde v^*) \in \widehat V(\hTa)$, so that
\be \label{eq_2_Dir}
     \norm{\Grbh \widehat v^*}_\homa \leq \norm{\Grbh E(\widetilde v^*)}_\homa = \norm{\Grbt \widetilde v^*}_\toma.
\ee
Combining~\eqref{eq_1_Dir}, \eqref{eq:min_pot_bound_ext}, and~\eqref{eq_2_Dir} gives
$\norm{\Grbt \widetilde v_p^*}_\toma \lesssim \norm{\Grbt \widetilde v^*}_\toma$,
and~\eqref{eq_en_map} yields the desired stability property~\eqref{eq:min_pot_bound}.

\bigskip

\noindent{\em At most two elements in the patch $\Ta$}

\medskip

\begin{figure}[htb]
\centerline{\includegraphics[height=0.4\textwidth]{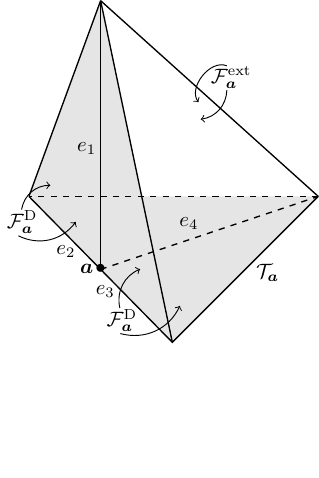} \qquad \qquad \includegraphics[height=0.4\textwidth]{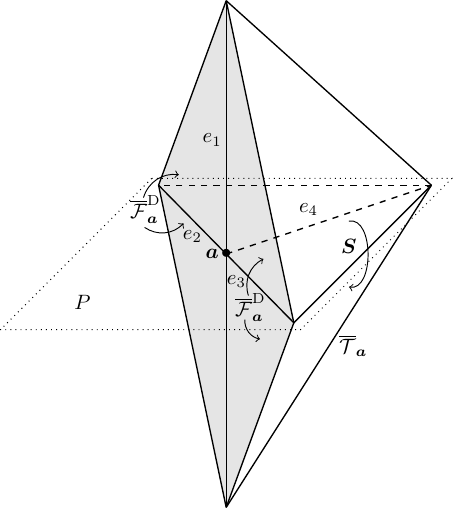}}
\caption{Boundary patch $\Ta$ with $\FaN = \emptyset$, the two faces in $\Fab$ not having a vertex lying inside $\pt\omab$, and four edges $\edg_i$, $1 \leq i \leq 4$ having $\ver$ as vertex and lying in $\pt \omaD$ (left); extended patch $\bTa$ (right)}
\label{fig:patch_bnd_ext}
\end{figure}

Let there be a face in $\Fab$ not having a vertex lying on $\pt\omab$ but let $|\Ta| \leq 2$, as in the left panel of Figure~\ref{fig:patch_bnd_ext}. We now show that~\eqref{eq:min_pot_bound} also holds true in this case. If $|\Ta| = 1$, \ie, the patch consists of a single tetrahedron, then~\eqref{eq:min_pot_bound} holds true in virtue of Lemma~\ref{lem_H1_simpl}. If $|\Ta| = 2$, then there exists an edge $\edg$ between the vertex $\ver$ and one of the vertices in $\Fab$ not lying inside $\pt\omab$ such that all the faces $\sd \in \FaD$ that share $\edg$ either lie in a plane $P$, or the patch $\Ta$ is such that there exits a collection of bijective affine maps $\tT_\elm$ (denoted simply $\tT$) as in the first step above, such that all the faces $\sd \in \FaD$ lie in a plane $P$ after the mapping of $\Ta$ by $\tT$. This condition is satisfied in the situation illustrated in the left panel of Figure~\ref{fig:patch_bnd_ext} for the faces $\edg_1$ and $\edg_4$ but not for $\edg_2$ and $\edg_3$. For simplicity, we keep the notation $\Ta$ for the patch even after a possible mapping by $\tT$, since an equivalent of~\eqref{eq_en_map} holds here as well.

Let us now map $\Ta$ by a collection of bijective affine maps $\tS$ in a symmetry over the plane $P$ into the other half-space of $\RR^3$, as in the second step above, see the right panel of Figure~\ref{fig:patch_bnd_ext} for an illustration. Note that again,
the shape-regularity parameter of $\bTa$ is equivalent to that of $\Ta$ up to a constant that only depends on $\gamma_{\Ta}$. The new patch $\bTa$ is not an interior patch, since otherwise, the condition on the edge $\edg$ would be violated. An illustration is given in the right panel of Figure~\ref{fig:patch_bnd_ext}.

Extend now the polynomial data $r_\sd$ ``by zero'' from $\Ta$ to $\bTa$. More precisely, for each face $\bsd \in \bFa$, consider a polynomial $\overline r_\bsd \in \PP_{p}(\bsd)$ such that
\bse\label{eq_r_ext_Dir_ni_int}\bat{2}
    \overline r_\bsd & = r_\sd \quad & & \text{ if } \bsd = \sd \in \Fain, \\
    \overline r_\bsd & = r_\sd & & \text{ if } \bsd = \sd \in \FaD, \\
    \overline r_\bsd & = 0 & & \text{ if } \bsd \in (\bFain \cup \bFaD) \setminus (\Fain \cup \FaD).
\eat\ese
Here $\bFa = \bFain \cup \bFab \cup \bFaD$ with our usual notation; in particular all faces lying in $\pt \boma$ and sharing the vertex $\ver$ are gathered in the Dirichlet set $\bFaD$.
Again, $\overline r_\bsd|_{\bsd \cap \pt \boma} = 0$ on all interior and Dirichlet faces $\bsd \in \bFain \cup \bFaD$ and $\sum_{\bsd \in \bFe} \iota_{\bsd, \bedg} \, \overline r_\bsd|_\bedg = 0$ on all edges $\bedg \in \bEa$, so that we can apply the result for the case where all the faces in $\bFab$ have at least one vertex lying in the interior of $\pt\bomab$. We then conclude using restriction from $\bTa$ to $\Ta$ by symmetry over $P$ and prolongation from $\Ta$ to $\bTa$ by zero.

\subsubsection{Case 2: pure Neumann conditions} \label{sec_bound_H1_Neu}

We now treat the case where $\FaD = \emptyset$, \ie, $\FaN$ collects all the faces lying on the boundary of the patch $\oma$ and having $\ver$ as vertex. Let
\bse \label{eq:spaces_min_pot_bound_Neu}\ba
    V_p(\Ta) & \eq \big\{ v_p\in \PP_p(\Ta);~ v_p|_\sd=0\,\, \forall \sd\in\Fab,\, \jump{v_p}_\sd = r_\sd \,\, \forall \sd \in \Fain \big\},\\
    V(\Ta) & \eq \big\{ v\in H^1(\Ta);~ v|_\sd=0\,\, \forall \sd\in\Fab,\, \jump{v}_\sd = r_\sd \,\, \forall \sd \in \Fain\big\}.
\ea\ese
Recall from Figure~\ref{fig:patch_bnd_T_S} the flattened patch $\tTa$ and the symmetrized patch $\hTa$.
For each interior face $\hsd \in \hFain$, we now consider a polynomial $\widehat r_\hsd \in \PP_{p}(\hsd)$ such that
\bse\label{eq_r_ext_Neu}\bat{2}
    \widehat r_\hsd & = \widetilde r_\tsd \qquad \qquad & & \text{ if } \hsd = \tsd \in \tFain, \\
    \widehat r_\hsd & = 0 & & \text{ if } \hsd = \tsd \in \tFaN, \\
    \widehat r_\hsd & = \widetilde r_\tsd \circ \tS^{-1} & & \text{ if } \hsd = \tS (\tsd), \tsd \in \tFain,
\eat\ese
where, in the last line, more precisely, $\tsd \in \tFK$, $\hsd \in \hFK$, $\helm = \tS (\telm)$, $\hsd = \tS (\tsd)$.
Remark that, crucially, we have $\widehat r_\hsd|_{\hsd \cap \pt \homa} = 0$ on all interior faces $\hsd \in \hFain$ and $\sum_{\hsd \in \hFe} \iota_{\hsd, \hedg} \, \widehat r_\hsd|_\hedg = 0$ on all interior edges $\hedg \in \hEa$.
Indeed, as in Section~\ref{sec_bound_H1_Dir}, the former property follows from~\eqref{eq_conds_H1_1_gen}, whereas the latter property is here a consequence of the conditions imposed in~\eqref{eq_r_ext_Neu} together with the choice of the orientation of the normals $\tn_{\hsd}$ of the faces $\hsd$ not present in $\tTa$. If the sum $\sum_{\hsd \in \hFe} \iota_{\hsd, \hedg} \, \widehat r_\hsd|_\hedg $ only contains faces either from $\tTa$ or from $\hTa \setminus \tTa$, then it is equal to zero because of~\eqref{eq_conds_H1_2_gen}, whereas if it also contains faces from both $\tTa$ and $\hTa \setminus \tTa$, then the summands from $\tTa$ and $\hTa \setminus \tTa$ cancel out owing to the symmetry of $\tTa$ with $\hTa \setminus \tTa$.

Then the result of Sections~\ref{sec:nonempty_pot} and~\ref{sec:un_pot} can again be used here, so there exist in particular unique minimizers to the problems~\eqref{eq:min_pot_bound_S}, and Theorem~\ref{thm_H1_ext} implies~\eqref{eq:min_pot_bound_ext}. Consider $\widehat v_p^*$ given by~\eqref{eq:min_pot_bound_S} and define
\be \label{eq_H1_map_Neu}
    \widetilde v_p \eq \widehat v_p^*|_\toma,
\ee
(compare with~\eqref{eq_H1_map} in the case of Dirichlet conditions). Recall that $\widehat V_p(\hTa)$ is defined by the mapping $\tS$ from~\eqref{eq:spaces_min_pot_bound_Neu} and that $\widetilde V_p(\tTa)$ is the equivalent of $V_p(\Ta)$ on $\tTa$.
Then $\widetilde v_p \in \widetilde V_p(\tTa)$; in particular, no condition needs to be satisfied on the Neumann faces $\tFaN$.
From~\eqref{eq:min_pot_bound_T} and the definition~\eqref{eq_H1_map_Neu} of $\widetilde v_p$, we immediately conclude that
\be \label{eq_1_Neu}
     \norm{\Grbt \widetilde v_p^*}_\toma \leq \norm{\Grbt \widetilde v_p}_\toma \leq \norm{\Grbh \widehat v_p^*}_\homa.
\ee
Extending the continuous minimizer $\widetilde v^*$ from~\eqref{eq:min_pot_bound_T} from $\toma$ to $\homa$ by symmetry as $E(\widetilde v^*)|_{\homa \setminus \toma} \eq \widetilde v^*|_\toma \circ \tS^{-1}$, we see that $E(\widetilde v^*) \in \widehat V(\hTa)$, so that
\be \label{eq_2_Neu}
     \norm{\Grbh \widehat v^*}_\homa \leq \norm{\Grbh E(\widetilde v^*)}_\homa \leq \sqrt{2} \norm{\Grbt \widetilde v^*}_\toma.
\ee
Combining~\eqref{eq_1_Neu}, \eqref{eq:min_pot_bound_ext}, and~\eqref{eq_2_Neu} gives $\norm{\Grbt \widetilde v_p^*}_\toma \lesssim \norm{\Grbt \widetilde v^*}_\toma$ and~\eqref{eq_en_map} yields the desired stability property~\eqref{eq:min_pot_bound}.

\bigskip

\noindent{\em At most two elements in the patch $\Ta$}

\medskip

If there exists a face in $\Fab$ without a vertex lying inside $\pt\omab$ but $|\Ta| \leq 2$, as in the right panel of Figure~\ref{fig:patch_bnd}, the face jumps $r_\sd$ have to be set to zero on the original Neumann faces and extended by symmetry over $P$ otherwise. We map from $\bTa$ to $\Ta$ by simple restriction and from $\Ta$ to $\bTa$ by symmetry, \cf~Figure~\ref{fig:patch_bnd_ext}.


%

\subsection{$\tH(\dv)$ setting}
\label{sec:bnd_ver_Hdiv}

We sketch here the proof of Theorem~\ref{thm_Hdv_ext_gen}. As in the $H^1$ setting, we present the case where all the faces in $\Fab$ have
at least one vertex lying in the interior of $\pt\omab$, see the left
panel of Figure~\ref{fig:patch_bnd} for an example; the case $|\Ta| \leq 2$, see the right panel of Figure~\ref{fig:patch_bnd}, can be treated as in Section~\ref{sec:bnd_ver_H1}. Contrary to the $H^1$ setting,
we do not make here any assumption on the subsets $\FaD$ and $\FaN$;
for the sake of clarity of exposition, we still distinguish the case with pure Neumann conditions, that with pure Dirichlet conditions, and finally
we treat the general case of mixed Neumann--Dirichlet conditions.
We again rely on the flattened patch $\tTa$ and the
symmetrized patch $\hTa$; see Figure~\ref{fig:patch_bnd_T_S}.

\subsubsection{Case 1: pure Neumann conditions} \label{sec_bound_Hdv_Neu}

Let us first assume that $\FaD = \emptyset$. Denote
\bse \label{eq:spaces_min_flux_bound} \ba \tV_p(\Ta) \eq \{ &\tv_p \in \RTN_p(\Ta);~
    \tv_p \scp \tn_\sd = r_\sd \,\, \forall \sd \in \Fab, \,\tv_p \scp \tn_\sd = r_\sd \,\, \forall \sd \in \FaN, \nn \\
    &\jump{\tv_p}\scp \tn_\sd = r_\sd \,\, \forall \sd \in \Fain, \, \Dvb \tv_p|_\elm = r_\elm \,\, \forall \elm \in \Ta\}, \\
\tV(\Ta) \eq \{ &\tv \in \HdvTa;~
    \tv \scp \tn_\sd = r_\sd \,\, \forall \sd \in \Fab,\,
    \tv \scp \tn_\sd = r_\sd \,\, \forall \sd \in \FaN,\nn \\
    &\jump{\tv}\scp \tn_\sd = r_\sd \,\, \forall \sd \in \Fain, \, \Dvb \tv|_\elm = r_\elm \,\, \forall \elm \in \Ta\}.
\ea \ese
Then the stability bound~\eqref{eq_Hdv_ext_gen} becomes
\be \label{eq:min_flux_bound}
\min_{\tv_p \in
\tV_p(\Ta)} \norm{\tv_p}_\oma
    \leq C_{\rm st} \min_{\tv \in \tV(\Ta)} \norm{\tv}_\oma.
\ee

The data $r_\elm$ and $r_\sd$ are here extended ``by zero'' from the flattened patch $\tTa$ to the symmetrized patch $\hTa$:
\bse\label{eq_r_ext_Dir_flux}\bat{2}
    \widehat r_\hsd & = \widetilde r_\tsd \quad & & \text{ if } \hsd = \tsd \in \tFain \cup \tFab, \\
    \widehat r_\helm & = \widetilde r_\telm & & \text{ if } \helm = \telm \in \tTa,\\
    \widehat r_\hsd & = \widetilde r_\tsd & & \text{ if } \hsd = \tsd \in \tFaN, \\
    \widehat r_\hsd & = 0 & & \text{ if } \hsd = \tS (\tsd), \tsd \in \tFain \cup \tFab,\\
    \widehat r_\helm & = 0 & & \text{ if } \helm = \tS (\telm), \telm \in \tTa,
\eat\ese
the spaces on $\hTa$ become
\bse \label{eq:spaces_min_flux_bound_hTa} \ba \widehat \tV_p(\hTa) \eq \{ &\widehat \tv_p \in \RTN_p(\hTa);~
    \widehat \tv_p \scp \tn_\hsd = \widehat r_\hsd \,\, \forall \hsd \in \hFab, \nn \\
    &\jump{\widehat \tv_p}\scp \tn_\hsd = \widehat r_\hsd \,\, \forall \hsd \in \hFain, \, \Dvb \widehat \tv_p|_\helm = \widehat r_\helm \,\, \forall \helm \in \hTa\}, \\
\widehat \tV(\hTa) \eq \{ &\tv \in \tH(\dv,\hTa);~
    \tv \scp \tn_\hsd = \widehat r_\hsd \,\, \forall \hsd \in \hFab,\nn \\
    &\jump{\tv}\scp \tn_\hsd = \widehat r_\hsd \,\, \forall \hsd \in \hFain, \, \Dvb \tv|_\helm = \widehat r_\helm \,\, \forall \helm \in \hTa\},
\ea \ese
and the stability property in $\hTa$ becomes
\be \label{eq:min_flux_bound_hTa}
\min_{\widehat \tv_p \in
\widehat \tV_p(\hTa)} \norm{\widehat \tv_p}_\homa
    \leq C_{\rm st} \min_{\widehat \tv \in \widehat \tV(\hTa)} \norm{\widehat \tv}_\homa.
\ee
Owing to~\eqref{eq_comp_gen} and~\eqref{eq_r_ext_Dir_flux}, we find that the compatibility condition~\eqref{eq_comp} in $\widehat\Ta$, \ie,
\be \label{eq_comp_gen_bnd}
    \sum_{\helm \in \hTa} (\widehat r_\helm, 1)_\helm - \sum_{\hsd \in
    \hFa} (\widehat r_\hsd, 1)_\hsd = 0
\ee
holds true. The last two ingredients of the proof are the restriction of the discrete minimizer $\widehat \tv_p^*$ of~\eqref{eq:min_flux_bound_hTa}, left, from $\hTa$ to $\tTa$ given by
\be \label{eq_ext_Hdv_Neu}
    \widetilde \tv_p \eq \widehat \tv_p^*|_\toma - \piola (\widehat \tv_p^*|_{\homa \setminus \toma}),
\ee
where the contravariant Piola transformation $\piola$ is built using the geometric mapping $\tS$, and the extension of the continuous minimizer $\widetilde \tv^*$ on $\tTa$ from $\tTa$ to $\hTa$ by zero.


\subsubsection{Case 2: pure Dirichlet conditions} \label{sec_bound_Hdv_Dir}

The case $\FaN = \emptyset$ can be treated as above.
After moving from $\Ta$ to $\tTa$ in the first step,
the data are extended ``by symmetry'' from $\tTa$ to $\hTa$, similarly to~\eqref{eq_r_ext_Neu}. This ensures the compatibility condition~\eqref{eq_comp} on the ``extended interior'' patch $\hTa$, even though there has been no such a condition on the original patch $\Ta$. The restriction of the discrete minimizer $\widehat \tv_p^*$ of~\eqref{eq:min_flux_bound_hTa}, left, from $\hTa$ to $\tTa$ is given by $\widetilde \tv_p \eq \widehat \tv_p^*|_\toma$, whereas the extension of the continuous minimizer $\widetilde \tv^*$ on $\tTa$ from $\tTa$ to $\hTa$ is obtained by symmetry as $E(\widetilde \tv^*)|_{\homa \setminus \toma} \eq \piola^{-1}(\widetilde \tv^*|_\toma)$.

\subsubsection{Case 3: mixed Neumann--Dirichlet conditions} \label{sec_bound_Hdv_NeuDir}

The proof proceeds again as above.
The main change is that in~\eqref{eq_r_ext_Dir_flux}, we also need to impose $\widehat r_\hsd$ for all $\hsd = \tsd \in \tFaD$: we can choose arbitrary values for all Dirichlet faces except for one, where $\widehat r_\hsd$ is chosen such that~\eqref{eq_comp_gen_bnd} holds true. As in Section~\ref{sec_bound_Hdv_Neu}, the restriction of the discrete minimizer $\widehat \tv_p^*$ is given by~\eqref{eq_ext_Hdv_Neu}; note that all the Neumann conditions prescribed on the faces from $\tFaN$ (or $\FaN$) are satisfied, whereas no conditions need to be satisfied on the faces from $\tFaD$ (or $\FaD$). Finally, the continuous minimizer $\widetilde \tv^*$ on $\tTa$ is extended from $\tTa$ to $\hTa$ by zero.

\paragraph*{Acknowledgments.} The authors are grateful to F. Meunier (CERMICS) for insightful discussions on the results of Appendix~\ref{app_graphs}.

\appendix

\section{Stable polynomial extensions on a tetrahedron}
\label{app_ext_tetra}

In this section, we reformulate and extend some recent results by Demkowicz, Gopolakrishnan, and Sch\"oberl \cite{Demk_Gop_Sch_ext_I_09,Demk_Gop_Sch_ext_III_12} and by Costabel and McIntosh \cite{Cost_McInt_Bog_Poinc_10} on stable polynomial extensions on a single tetrahedron. We first consider $H^1$-stable extensions in the polynomial space $\PP_p(\elm)$.

\bl[$H^1$-stable polynomial extension on a tetrahedron]\label{lem_H1_simpl}
Let $\elm$ be a tetrahedron with $\FKD \subset \FK$ a possibly empty
subset of its faces. Let $r$ be a $p$-degree piecewise polynomial on $\FKD$,
$p \geq 1$, with restriction to each $\sd \in \FKD$ denoted by $r_\sd$.
Assume that $r$ is globally continuous over the Dirichlet boundary given by $\FKD$. Then there exists a
constant $C > 0$ only depending on the shape-regularity parameter
$\gamma_\elm$ of $\elm$ such that
\be \label{eq_H1_ext_simpl}
    \min_{\substack{v_p \in \PP_p(\elm)\\
    v_p|_\sd = r_\sd \,\,\forall \sd \in \FKD}} \norm{\Gr v_p}_\elm
    \leq C \min_{\substack{v \in \Hoi{\elm}\\
    v|_\sd = r_\sd \,\,\forall \sd \in \FKD}} \norm{\Gr v}_\elm.
\ee
\el

\br[Equivalent form] \label{rem_eq_H1} Consider the following problem: find
$\zeta_\elm$ such that
\batn{2}
    - \Lap \zeta_\elm & = 0 \qquad & & \mbox{ in } \, \elm, \\
    \zeta_\elm|_\sd & = r_\sd & & \mbox{ on all } \sd \in \FKD, \\
    -\Gr \zeta_\elm \scp \tn_\elm|_\sd & = 0 & & \mbox{ on all } \sd \in \FK \setminus \FKD,
\eatn
\ie, in weak form, $\zeta_\elm \in \Hoi{\elm}$ is such that
$\zeta_\elm|_\sd = r_\sd$ for all $\sd \in \FKD$ and
\[
    (\Gr \zeta_\elm, \Gr v)_\elm = 0 \qquad \forall v \in \Hoi{\elm}, \, v|_\sd =
    0 \,\, \forall \sd \in \FKD.
\]
Similarly, the (spectral) finite element method of order $p$ finds
$\zeta_{p,\elm} \in \PP_p(\elm)$ with $\zeta_{p,\elm}|_\sd = r_\sd$ for all
$\sd \in \FKD$ such that
\[
    (\Gr \zeta_{p,\elm}, \Gr v_p)_\elm = 0 \qquad \forall v_p \in \PP_p(\elm), \, v_p|_\sd =
    0 \mbox{ for all } \sd \in \FKD.
\]
As $\zeta_{p,\elm}$ and $\zeta_\elm$ are, respectively, the (unique) minimizers
from~\eqref{eq_H1_ext_simpl}, Lemma~\ref{lem_H1_simpl} can be rephrased as a
stability result for (spectral) finite elements on a single tetrahedron, \ie,
$\|\Gr \zeta_\elm\|_\elm \le \|\Gr \zeta_{p,\elm}\|_\elm \le
C\|\Gr\zeta_\elm\|_\elm$. \er

\bp
The result of Lemma~\ref{lem_H1_simpl} follows from the results
and proofs in \cite{Demk_Gop_Sch_ext_I_09}. For completeness, we give a proof using the present notation. \\
(1) Let us start by noting that in the case $\FKD = \emptyset$, both $\zeta_{p,\elm}$ and $\zeta_{\elm}$ can be taken as zero. Let us henceforth suppose that $\FKD \neq \emptyset$. \\
(2) We first establish~\eqref{eq_H1_ext_simpl} on the unit tetrahedron, say $\wK$; to this purpose, we proceed in three substeps.
\\
(2.a) Case $\elm=\wK$ and $\FKD = \FK$, \ie, the Dirichlet condition is prescribed on the
whole boundary $\pt \elm$. Then~\cite[Theorem~6.1]{Demk_Gop_Sch_ext_I_09}
shows that there exists a polynomial $\zeta_p(r) \in \PP_p(\elm)$ such that
$\zeta_p(r)|_{\pt \elm} = r$ and $\|\zeta_p(r)\|_{H^1(\elm)} \le
C_{\mathrm{DGS}}\norm{r}_{H^\ft(\pt \elm)}$, where $\norm{r}_{H^\ft(\pt
\elm)}$ is defined using the Sobolev--Slobodeckij norm as follows:
\[
\norm{r}_{H^\ft(\pt \elm)}^2 = \norm{r}_{L^2(\pt \elm)}^2 + \int_{\pt\elm}\int_{\pt\elm}
\frac{|r(\tx)-r(\ty)|^2}{\|\tx-\ty\|_{\ell^2}^d}\dx\dy,
\]
recalling that $d=3$. Since there exist constants $\underline{C}_{H^{\ft}}$
and $C_{H^{\ft}}$ of order unity so that
\be \label{eq:norme_demi}
    \underline{C}_{H^{\ft}} \min_{\substack{v\in H^1(\elm) \\
    v|_{\pt \elm} = r}} \norm{v}_{H^1(\elm)} \leq \norm{r}_{H^\ft(\pt \elm)} \le C_{H^{\ft}} \min_{\substack{v\in H^1(\elm) \\
    v|_{\pt \elm} = r}} \norm{v}_{H^1(\elm)},
\ee
and since $\|\zeta_p(r)\|_{H^1(\elm)}\geq \min \norm{v_p}_{H^1(\elm)}$ over
all $v_p\in \PP_p(\elm)$ such that $v_p|_{\pt
\elm} = r$, we infer that
\be\label{eq:stab_K_pot}
    \min_{\substack{v_p \in \PP_p(\elm)\\
    v_p|_\sd = r_\sd \,\,\forall \sd \in \FKD}} \norm{v_p}_{H^1(\elm)}
    \leq \wC \min_{\substack{v \in \Hoi{\elm}\\
    v|_\sd = r_\sd \,\,\forall \sd \in \FKD}} \norm{v}_{H^1(\elm)},
\ee
with $\wC=C_{\mathrm{DGS}}C_{H^{\ft}}$ and $\norm{v}_{H^1(\elm)}^2 = \norm{v}_\elm^2 + \norm{\Gr v}_\elm^2$.
Note that we are using the $H^1$-norm in~\eqref{eq:stab_K_pot}, and not the $H^1$-seminorm as in~\eqref{eq_H1_ext_simpl}.
\\
(2.b) Case $\elm=\wK$ and $\FKD \neq \FK$. Let us prove again~\eqref{eq:stab_K_pot}. Let us set
\be \label{eq:tilde_zeta_K}
\tilde\zeta_\elm \eq \argmin_{\substack{v\in H^1(\elm)\\
v|_\sd = r_\sd\,\,\forall\sd \in \FKD}} \|v\|_{H^1(\elm)}.
\ee
Note that $\tilde\zeta_\elm$ solves in strong form $-\Lap\tilde\zeta_\elm+
\tilde\zeta_\elm=0$ in $\elm$, $\tilde\zeta_\elm|_\sd=r|_\sd$ for all
$\sd\in\FKD$, and $-\Gr \tilde \zeta_\elm \scp \tn_\elm|_\sd=0$ for all
$\sd\in\FK\setminus\FKD$; note also that $\tilde\zeta_\elm$ is well defined since
$\FKD$ is assumed to be non-empty. Define a new function $\tilde r \in
H^\ft(\pt \elm)$ as the trace of $\tilde\zeta_\elm$ on $\pt \elm$; this
extends the boundary data $r$ originally defined only on the faces in $\FKD$ to
the whole $\pt \elm$, not necessarily by polynomials on the faces in $\FK
\setminus \FKD$. The definition~\eqref{eq:tilde_zeta_K} of
$\tilde\zeta_\elm$ combined with~\eqref{eq:norme_demi}
yields $\norm{\tilde r}_{H^\ft(\pt \elm)}\le
C_{H^{\ft}}\|\tilde\zeta_\elm\|_{H^1(\elm)}$. Let us now order the faces of
$\elm$ with the faces in $\FKD$ first and consider only the summands
corresponding to the faces in $\FKD$ instead of the full extension operator
of~\cite[equation~(6.1)]{Demk_Gop_Sch_ext_I_09}, applied to the function
$\tilde r$. Following~\cite[Theorem~6.1]{Demk_Gop_Sch_ext_I_09}, we obtain a
polynomial $\zeta_{p}(\tilde r) \in \PP_p(\elm)$ such that $\zeta_{p}(\tilde
r)|_\sd = r_\sd$ for all $\sd \in \FKD$ and $\norm{\zeta_{p}(\tilde
r)}_{H^1(\elm)} \leq C_{\mathrm{DGS}} \norm{\tilde r}_{H^\ft(\pt \elm)}$.
Combining with the above bound on $\norm{\tilde r}_{H^\ft(\pt \elm)}$ and
since $\|\zeta_p(\tilde r)\|_{H^1(\elm)}\geq \min \norm{v_p}_{H^1(\elm)}$
over all $v_p\in \PP_p(\elm)$ such that $v_p|_\sd = r_\sd$ on all
$\sd\in\FKD$, we infer that \eqref{eq:stab_K_pot} also holds true if $\FKD \neq
\FK$ with $\wC=C_{\mathrm{DGS}}C_{H^{\ft}}$. This completes the proof of~\eqref{eq:stab_K_pot}.\\
(2.c) Let us prove that~\eqref{eq_H1_ext_simpl} holds true when $\elm=\wK$. Let $c\in\RR$ be arbitrary and let us set $r' \eq r+c$; note that $r'$
is also a $p$-degree piecewise polynomial on $\FKD$ that is globally
continuous over $\FKD$. Since $\elm$ is the unit tetrahedron
and applying~\eqref{eq:stab_K_pot} with the datum $r'$, we infer that
\ban
\min_{\substack{v_p \in \PP_p(\elm)\\
    v_p|_\sd = r_\sd \,\,\forall \sd \in \FKD}} \norm{\Gr v_p}_{\elm}
&\le
\min_{\substack{v_p \in \PP_p(\elm)\\
    v_p|_\sd = r_\sd \,\,\forall \sd \in \FKD}} \norm{v_p+c}_{H^1(\elm)}
= \min_{\substack{v_p \in \PP_p(\elm)\\
    v_p|_\sd = r'_\sd \,\,\forall \sd \in \FKD}} \norm{v_p}_{H^1(\elm)}
\\
&\le C \min_{\substack{v \in \Hoi{\elm}\\
    v|_\sd = r'_\sd \,\,\forall \sd \in \FKD}} \norm{v}_{H^1(\elm)}
= C
\min_{\substack{v \in \Hoi{\elm}\\
    v|_\sd = r_\sd \,\,\forall \sd \in \FKD}} \norm{v+c}_{H^1(\elm)},
\ean
where the first bound follows by dropping the $L^2$-norm of $v_p+c$. Taking
the infimum over $c\in\RR$ on the right-hand side and using the Poincar\'e
inequality $\inf_{c\in\RR}\|v+c\|_{\elm}\le \frac{1}{\pi} h_\elm \|\nabla v\|_\elm \le c\|\nabla v\|_\elm$ with a constant $c$ of order unity, we infer that
\eqref{eq_H1_ext_simpl} holds true when $\elm=\wK$.
\\
(3) Finally, we use a scaling argument to prove~\eqref{eq_H1_ext_simpl} in any tetrahedron $\elm$. Let $\zeta_{p,\elm}$ and $\zeta_\elm$ be the two minimizers in~\eqref{eq_H1_ext_simpl} (see Remark~\ref{rem_eq_H1}). Let $\tT$ be the affine geometric map from the unit tetrahedron $\wK$ to $K$. Then the pullback by $\tT$ defined as $\psi(v)=v\circ \tT$ is an isomorphism from $H^1(\elm)$ to $H^1(\wK)$ and also from $\PP_p(\elm)$ to $\PP_p(\wK)$. Moreover, we have $\norm{\Gr(\psi(v))}_{\wK} \le C_\psi \norm{\Gr v}_\elm$ and $\norm{\Gr (\psi^{-1}(\widehat v))}_\elm \le C_{\psi^{-1}}\norm{\Gr \widehat v}_{\wK}$ with constants such that $C_\psi C_{\psi^{-1}}$ is uniformly bounded by the shape-regularity parameter of $\elm$.
Let now $\FwKD\eq\{\wF\in \FwK;\,
\tT(\wF)\in \FKD\}$ and let us introduce the piecewise polynomial $\wr$ such that $\wr_{\wF} = r\circ (\tT|_{\wF})$ for all
$\wF\in\FwKD$. Applying the result of Step~(2c) to $\wK$ with the polynomial data
$\wr$ and the subset $\FwKD$, and introducing the two corresponding minimizers, $\widehat\zeta_{p,\wK}$ and $\widehat \zeta_{\wK}$, we infer that
$\norm{\Gr \widehat\zeta_{p,\wK}}_{\wK}
\leq \wC \norm{\Gr \widehat \zeta_{\wK}}_{\wK}$
with $\wC$ of order unity. Finally, we have
\begin{align*}
\norm{\Gr \zeta_{p,\elm}}_{\elm} &\le \norm{\Gr (\psi^{-1}(\widehat\zeta_{p,\wK}))}_{\elm}
\le C_{\psi^{-1}}\norm{\Gr \widehat\zeta_{p,\wK}}_{\wK}
\le C_{\psi^{-1}}\wC \norm{\Gr \widehat \zeta_{\wK}}_{\wK} \\
&\le C_{\psi^{-1}}\wC \norm{\Gr \psi(\zeta_{\elm})}_{\wK}
\le C_\psi C_{\psi^{-1}}\wC \norm{\Gr \zeta_{\elm}}_{\elm},
\end{align*}
since $\psi^{-1}(\widehat\zeta_{p,\wK})$ is in the minimization set defining $\zeta_{p,\elm}$ and $\psi(\zeta_{\elm})$ is in that defining $\widehat \zeta_{\wK}$.
\ep

Let us now consider $\tH(\dv)$-stable extensions in the polynomial space
$\RTN_p(\elm)$. The following lemma rephrases the first two steps of the
proof of~\cite[Theorem~7]{Brae_Pill_Sch_p_rob_09}, while merging them together and extending them to three space dimensions. Recall that the normal trace of a field in $\Hdvi{\elm}$ is prescribed according to~\eqref{eq:def_vn_K}.

\bl[$\tH(\dv)$-stable polynomial extension on a
tetrahedron]\label{lem_Hdv_simpl} Let $\elm$ be a tetrahedron with unit
outward normal $\tn_\elm$. Let $\FKN \subset \FK$ be a possibly empty subset
of its faces. Let $r$ be a $p$-degree piecewise polynomial on $\FKN$, $p
\geq 0$, with restriction to each $\sd\in\FKN$ denoted by $r_\sd$. Let
$r_\elm$ be a $p$-degree polynomial in $\elm$. If $\FKN = \FK$, assume that
$\sum_{\sd \in \FK}(r_\sd,1)_\sd = (r_\elm,1)_\elm$. Then there exists a
constant $C > 0$ only depending on the shape-regularity parameter
$\gamma_\elm$ such that
\be \label{eq_Hdv_ext_simpl}
    \min_{\substack{\tv_p \in \RTN_p(\elm)\\
    \tv_p \scp \tn_\elm|_\sd = r_\sd \,\, \forall \sd \in \FKN\\
    \Dv \tv_p = r_\elm}}
    \norm{\tv_p}_\elm \leq C
    \min_{\substack{\tv \in \Hdvi{\elm}\\
    \tv \scp \tn_\elm|_\sd = r_\sd \,\, \forall \sd \in \FKN\\
    \Dv \tv = r_\elm}}
    \norm{\tv}_\elm.
\ee
\el

\br[Equivalent form] \label{rem_eq_Hdv} Consider the following problem: find
$\zeta_\elm$ such that
\begin{subequations} \label{eq:strong_form_zeta} \begin{alignat}{2}
    - \Lap \zeta_\elm & = r_\elm \qquad & & \mbox{ in } \, \elm,\\
    \zeta_\elm|_\sd & = 0 & & \mbox{ on all } \sd \in \FK \setminus \FKN, \label{eq_Lap_K_Hdv}\\
    -\Gr \zeta_\elm \scp \tn_\elm|_\sd & = r_\sd & & \mbox{ on all } \sd \in \FKN,
\end{alignat} \end{subequations}
\ie, in weak form, $\zeta_\elm\in H^1(\elm)$ is such that $\zeta_\elm|_\sd = 0$
on all $\sd \in \FK \setminus \FKN$, $(\zeta_\elm,1)_\elm=0$ if $\FKN=\FK$,
and
\be \label{eq_primal_Hdv_K}
    (\Gr \zeta_\elm,\Gr \phi)_\elm = (r_\elm,\phi)_\elm - \sum_{\sd\in\FKN} (r_\sd,\phi)_\sd
    \qquad \forall \phi\in H^1(\elm) \text{ with } \phi|_\sd = 0 \,\, \forall \sd \in \FK
    \setminus \FKN.
\ee
The dual weak formulation looks for $\bxi_\elm \in \Hdvi{\elm}$ with $\Dv
\bxi_\elm = r_\elm$ and $\bxi_\elm \scp \tn_\elm|_\sd = r_\sd$ on all $\sd
\in \FKN$ such that
\[
    (\bxi_\elm, \tv)_\elm = 0 \qquad \forall \tv \in \Hdvi{\elm} \text{ with $\Dv \tv = 0$ and $\tv \scp \tn_\elm|_\sd = 0$, $\forall \sd \in \FKN$},
\]
and it is well-known that \be \label{eq:equiv_prim_dual}
\bxi_\elm=\argmin_{\substack{\tv \in \Hdvi{\elm}\\
    \tv \scp \tn_\elm|_\sd = r_\sd \,\, \forall \sd \in \FKN\\
    \Dv \tv = r_\elm}} \|\tv\|_\elm = - \Gr \zeta_\elm.
\ee Similarly, the dual (or, equivalently, (dual) mixed) finite element method
(here, a dual spectral method) finds $\bxi_{p,\elm} \in \RTN_p(\elm)$ with
$\Dv \bxi_{p,\elm} = r_\elm$ and $\bxi_{p,\elm} \scp \tn_\elm|_\sd = r_\sd$
on all $\sd \in \FKN$ such that
\[
    (\bxi_{p,\elm}, \tv_p)_\elm = 0 \qquad \forall \tv_p \in \RTN_p(\elm) \text{ with $\Dv \tv_p = 0$ and $\tv_p \scp \tn_\elm|_\sd = 0$, $\forall \sd \in \FKN$}.
\]
As $\bxi_{p,\elm}$ and $\bxi_\elm$ are, respectively, the unique minimizers
from~\eqref{eq_Hdv_ext_simpl}, Lemma~\ref{lem_Hdv_simpl} can be rephrased as
a stability result for mixed finite elements on a single tetrahedron, \ie,
$\|\bxi_\elm\|_\elm\le\|\bxi_{p,\elm}\|_\elm\le C\|\bxi_\elm\|_\elm$. \er

\bp We first establish~\eqref{eq_Hdv_ext_simpl} on the unit tetrahedron, say
$\wK$; to this purpose, we proceed in three steps. Then, we
establish~\eqref{eq_Hdv_ext_simpl} on any tetrahedron by using the
contravariant Piola transformation.
\\
(1) Case $\elm=\wK$ and $\FKN = \emptyset$. We infer from
\cite[Corollary~3.4]{Cost_McInt_Bog_Poinc_10} that there is $\bxi_p(r_\elm)
\in \RTN_p(\elm)$ such that $\Dv \bxi_p(r_\elm) = r_\elm$ and
$\|\bxi_p(r_\elm)\|_\elm \le C_{\mathrm{CM}}\norm{r_\elm}_{H^{-1}(\elm)}$
where $\norm{r_\elm}_{H^{-1}(\elm)} \eq \max (r_\elm, \phi)_\elm$ over all
$\phi \in \Hooi{\elm}$ such that $\norm{\Gr \phi}_\elm = 1$. Furthermore,
since a Dirichlet boundary condition is prescribed over the whole boundary of
$\elm$ in~\eqref{eq:strong_form_zeta} when $\FKN = \emptyset$, we infer that $\zeta_\elm \in \Hooi{\elm}$ is such
that $(\Gr \zeta_\elm, \Gr \phi)_\elm = (r_\elm, \phi)_\elm$ for all $\phi
\in \Hooi{\elm}$. Then, we have by~\eqref{eq:equiv_prim_dual},
\[
    \min_{\substack{\tv \in \Hdvi{\elm}\\
    \Dv \tv = r_\elm}}
    \norm{\tv}_\elm = \norm{\bxi_\elm}_\elm = \norm{\Gr \zeta_\elm}_\elm =
    \max_{\substack{\phi \in \Hooi{\elm}\\
    \norm{\Gr \phi}_\elm = 1}} (r_\elm, \phi)_\elm = \norm{r_\elm}_{H^{-1}(\elm)}.
\]
Altogether,
\[
    \min_{\substack{\tv_p \in \RTN_p(\elm)\\
    \Dv \tv_p = r_\elm}}
    \norm{\tv_p}_\elm \leq
\norm{\bxi_p(r_\elm)}_\elm \le  C_{\mathrm{CM}}\norm{r_\elm}_{H^{-1}(\elm)}
= C_{\mathrm{CM}} \min_{\substack{\tv \in \Hdvi{\elm}\\
    \Dv \tv = r_\elm}}
    \norm{\tv}_\elm.
\]
(2) Case $\elm=\wK$, $\FKN \ne\emptyset$, and $r_\elm = 0$. We further
distinguish two cases.
\\
(2.a) Assume first that $\FKN=\FK$. Since a Neumann boundary condition is
prescribed over the whole boundary of $\elm$ in~\eqref{eq:strong_form_zeta},
$\zeta_\elm\in H^1(\elm)$ is such that $(\zeta_\elm,1)_\elm=0$ and $(\Gr
\zeta_\elm,\Gr \phi)_\elm = (r,\phi)_{\pt \elm}$ for all $\phi\in H^1(\elm)$. Since $(r,1)_{\pt \elm}=0$ by assumption,
\cite[Theorem~7.1]{Demk_Gop_Sch_ext_III_12} shows that there is $\bxi_p(r)
\in \RTN_p(\elm)$ (actually, in $[\PP_p(\elm)]^{3})$ such that
$\Dv\bxi_p(r)=0$, $\bxi_p(r)\scp \tn_\elm|_\sd=r_\sd$ for all $\sd\in\FK$,
and $\|\bxi_p(r)\|_{\Hdvi{\elm}} = \|\bxi_p(r)\|_\elm \le
C_{\mathrm{DGS}}\|r\|_{H^\mft(\pt \elm)}$. Here, $H^\mft(\pt \elm)$ is the
dual space of $H^\ft(\pt \elm)$ equipped with the norm
$\|r\|_{H^\mft(\pt \elm)} \eq \max (r,\phi)_{\pt \elm}$ over all $\phi \in
H^\ft(\pt \elm)$ such that $\norm{\phi}_{H^\ft(\pt \elm)} = 1$. On the unit
tetrahedron, we can use the lower bound in~\eqref{eq:norme_demi} and the
Poincar\'e inequality in the space $\{\phi\in H^1(\elm);\, (\phi,1)_\elm=0\}$ to infer that there is a constant $C_{H^{-\ft}}$ of
order unity so that
\[
\|r\|_{H^\mft(\pt \elm)} \le C_{H^{-\ft}} \sup_{\substack{\phi\in H^1(\elm) \\ (\phi,1)_\elm=0}}
\frac{(r,\phi)_{\pt \elm}}{\|\Gr \phi\|_\elm}.
\]
Owing to~\eqref{eq:equiv_prim_dual}, we infer that
\[
\|r\|_{H^\mft(\pt \elm)} \le C_{H^{-\ft}} \|\Gr \zeta_\elm\|_\elm
= C_{H^{-\ft}} \min_{\substack{\tv \in \Hdvi{\elm}\\
    \tv \scp \tn_\elm|_\sd = r_\sd \,\, \forall \sd \in \FK\\
    \Dv \tv = 0}}
    \norm{\tv}_\elm.
\]
Altogether,
\[
\min_{\substack{\tv_p \in \RTN_p(\elm)\\
    \tv_p \scp \tn_\elm|_\sd = r_\sd \,\, \forall \sd \in \FK\\
    \Dv \tv_p = 0}}
    \norm{\tv_p}_\elm
\le \|\bxi_p(r)\|_\elm \le C_{\mathrm{DGS}}\|r\|_{H^\mft(\pt \elm)}
\le C_{\mathrm{DGS}}C_{H^{-\ft}}\min_{\substack{\tv \in \Hdvi{\elm}\\
    \tv \scp \tn_\elm|_\sd = r_\sd \,\, \forall \sd \in \FK\\
    \Dv \tv = 0}}
    \norm{\tv}_\elm.
\]
(2.b) Assume now that $\emptyset \neq \FKN \subsetneq \FK$. Let us set
\[
\tilde\zeta_\elm \eq \argmin_{\substack{v\in H^1(\elm)\\
v|_\sd =0 \,\, \forall \sd\in\FK\setminus\FKN}} \Bigg\{\frac 1 2 \|\Gr v\|_\elm^2 + \sum_{\sd\in\FKN} (r_\sd,v)_\sd \Bigg\},
\]
\ie, in weak form $(\Gr\tilde\zeta_\elm,\Gr\phi)_\elm = - \sum_{\sd\in\FKN}
(r_\sd,\phi)_\sd$ for all $\phi\in H^1(\elm)$ such that $\phi|_\sd=0$ for all
$\sd\in\FK\setminus\FKN$. Since $\elm$ is a convex polyhedron, elliptic
regularity implies that $\tilde\zeta_\elm \in H^2(\elm)$, so that the normal
derivative $- \Gr \tilde\zeta_\elm\scp\tn_\elm$ can be given a pointwise
meaning on $\pt \elm$. Let us call $\tilde r$ this normal derivative (we could have also considered $\tilde\bzeta_\elm\scp\tn_\elm$, where $\tilde\bzeta_\elm$ is the $\arg$ $\min$ of $\norm{\tv}_\elm$ over all $\tv \in \Hdvi{\elm}$ such that $\tv \scp \tn_\elm|_\sd = r_\sd$ for all $\sd \in \FKN$ and such that $\Dv \tv = 0$). We
infer that $\norm{\tilde r}_{H^{-\ft}(\pt \elm)} \le
C\|\Gr\tilde\zeta_\elm\|_\elm$. Let us now order the faces in $\FKN$ first
and consider only the summands corresponding to the faces from $\FKN$ instead
of the full extension operator
of~\cite[equation~(7.1)]{Demk_Gop_Sch_ext_III_12}, applied to the function
$\tilde r$. Following~\cite[Theorem~7.1]{Demk_Gop_Sch_ext_III_12}, we obtain
a polynomial $\bzeta_p(\tilde r)\in [\PP_p(\elm)]^3$ such that
$\bzeta_p(\tilde r)\scp\tn_\elm|_\sd=r_\sd$ for all $\sd\in\FKN$ and
$\|\bzeta_p(\tilde r)\|_\elm \le C_{\mathrm{DGS}}\|\tilde r\|_{H^{-\ft}(\pt
\elm)}$. Combining the above bounds and reasoning as above, we infer
that~\eqref{eq_Hdv_ext_simpl} holds true in case (2) altogether.
\\
(3) Proof of~\eqref{eq_Hdv_ext_simpl} when $\elm=\wK$. Since $\elm$
is the unit tetrahedron, we can use the bounds established in Steps~(1)
and~(2). Let $\bxi_{p,\elm}'\in \RTN_p(\elm)$ be the discrete $\argmin$ with
only divergence prescribed by $r_\elm$ but no boundary flux prescribed. Using
the result of Step~(1), we infer that
\[
    \norm{\bxi_{p,\elm}'}_\elm =
    \min_{\substack{\tv_p \in \RTN_p(\elm)\\
    \Dv \tv_p = r_\elm}}
    \norm{\tv_p}_\elm \leq  C
    \min_{\substack{\tv \in \Hdvi{\elm}\\
    \Dv \tv = r_\elm}}
    \norm{\tv}_\elm \leq C
    \min_{\substack{\tv \in \Hdvi{\elm}\\
    \tv \scp \tn_\elm|_\sd = r_\sd \,\, \forall \sd \in \FKN\\
    \Dv \tv = r_\elm}}
    \norm{\tv}_\elm = C\norm{\bxi_\elm}_\elm,
\]
where the last inequality follows by restricting the minimization set and
introducing the unique minimizer $\bxi_\elm$ defined in
Remark~\ref{rem_eq_Hdv}. Let now $\bxi_{p,\elm}''\in \RTN_p(\elm)$ be the
discrete $\argmin$ with divergence prescribed to zero and boundary flux
prescribed to $r_\sd - \bxi_{p,\elm}' \scp \tn_\elm|_\sd$ for all
$\sd\in\FKN$. In the case where $\FKN=\FK$, the compatibility condition on
the prescribed fluxes holds true since
\[
\sum_{\sd\in\FKN} (r_\sd - \bxi_{p,\elm}' \scp \tn_\elm|_\sd,1)_\sd
= \sum_{\sd\in\FKN} (r_\sd,1)_\sd - (\Dv\bxi_{p,\elm}',1)_\elm
= \sum_{\sd\in\FKN} (r_\sd,1)_\sd - (r_\elm,1)_\elm = 0
\]
by assumption. Step~(2) implies that
\[
    \norm{\bxi_{p,\elm}''}_\elm = \min_{\substack{\tv_p \in \RTN_p(\elm)\\
    \tv_p \scp \tn_\elm|_\sd = r_\sd - \bxi_{p,\elm}' \scp \tn_\elm|_\sd\,\, \forall \sd \in \FKN\\
    \Dv \tv_p = 0}}
    \norm{\tv_p}_\elm \leq C
    \min_{\substack{\tv \in \Hdvi{\elm}\\
    \tv \scp \tn_\elm|_\sd = r_\sd - \bxi_{p,\elm}' \scp \tn_\elm|_\sd\,\, \forall \sd \in \FKN\\
    \Dv \tv = 0}}
    \norm{\tv}_\elm.
\]
Furthermore, a shift by $\bxi_{p,\elm}'$ allows to rewrite equivalently, and
then bound by the triangle inequality, the last minimum above as follows:
\[
    \min_{\substack{\tv \in \Hdvi{\elm}\\
    \tv \scp \tn_\elm|_\sd = r_\sd \,\, \forall \sd \in \FKN\\
    \Dv \tv = r_\elm}}
    \norm{\tv - \bxi_{p,\elm}'}_\elm \leq
    \min_{\substack{\tv \in \Hdvi{\elm}\\
    \tv \scp \tn_\elm|_\sd = r_\sd \,\, \forall \sd \in \FKN\\
    \Dv \tv = r_\elm}}
    \norm{\tv}_\elm + \norm{\bxi_{p,\elm}'}_\elm = \norm{\bxi_\elm}_\elm + \norm{\bxi_{p,\elm}'}_\elm,
\]
so that $\norm{\bxi_{p,\elm}''}_\elm\le C(\norm{\bxi_\elm}_\elm +
\norm{\bxi_{p,\elm}'}_\elm)$. Now $\bxi_{p,\elm}' + \bxi_{p,\elm}''$ belongs
to the discrete minimization set in~\eqref{eq_Hdv_ext_simpl} and
$\norm{\bxi_{p,\elm}' + \bxi_{p,\elm}''}_\elm$ is bounded by
$\norm{\bxi_\elm}_\elm$. This proves~\eqref{eq_Hdv_ext_simpl} on the unit
tetrahedron.
\\
(4) Proof of~\eqref{eq_Hdv_ext_simpl} on a general tetrahedron $\elm$. We are
given a subset $\FKN$ of the faces of $\elm$, a $p$-degree piecewise
polynomial $r$ on $\FKN$, and a $p$-degree polynomial $r_\elm$ in $\elm$ such
that, if $\FKN=\FK$, $\sum_{\sd \in \FK}(r_\sd,1)_\sd = (r_\elm,1)_\elm$. We
are going to prove~\eqref{eq_Hdv_ext_simpl} on $\elm$ by mapping the
minimization problems to the unit tetrahedron $\wK$. Consider an affine
bijective map $\tT : \wK\to \elm$ with Jacobian matrix $\Jac$. Note that
$\Jac$ is a constant (and invertible) matrix in $\wK$ since $\tT$ is an
affine (bijective) map.  Let $\FwKN$ collect the faces of
$\wK$ that are images by $\tT^{-1}$ of the faces of $\elm$ in $\FKN$.  Let us
set
\[
\wr_{\wF} \eq \det(\Jac) \|\Jac^{-\mathrm{T}}\tn_{\wF}\|_{\ell^2} (r_\sd\circ \tT|_{\wF}),\,\,\forall \wF\in\FwKN, \qquad
\wr_{\wK} \eq \det(\Jac) (r_\elm \circ \tT),
\]
where $\tn_{\wF}$ is the unit outward normal to $\wK$ on the face $\wF$.
Then, $\wr$ defined by its restrictions to the faces $\wF\in\FwKN$ is a
$p$-degree piecewise polynomial on $\FwKN$, and $\wr_{\wK}$ is a $p$-degree
polynomial on $\wK$, and in the case where $\FwKN=\FwK$, we additionally have
\ban
\sum_{\wF\in\FwK} (\wr_{\wF},1)_{\wF} &= \sum_{\wF\in\FwK} (r_\sd\circ \tT|_{\wF},\det(\Jac) \|\Jac^{-\mathrm{T}}\tn_{\wF}\|_{\ell^2})_{\wF} \\
&= \sum_{\sd\in\FK} \epsilon_\Jac (r_\sd,1)_\sd = \epsilon_\Jac
(r_\elm,1)_\elm = (r_\elm \circ \tT,\det(\Jac))_{\wK} =
(\wr_{\wK},1)_{\wK}, \ean
with $\epsilon_\Jac = \frac{\det(\Jac)}{|\det(\Jac)|}=\pm1$. Here, we used the following classical formulas to change the surface measure and the
volume measure: $\mathrm{d}s = |\det(\Jac)|
\|\Jac^{-\mathrm{T}}\tn_{\wF}\|_{\ell^2} \mathrm{d}\widehat{s}$ and
$\mathrm{d}\tx = |\det(\Jac)| \mathrm{d}\widehat{\tx}$. Let us now consider
the contravariant Piola transformation such that $\piola(\tv) = \tA(\tv\circ
\tT)$ with $\tA = \det(\Jac)\Jac^{-1}$. Then $\piola$ is an isomorphism from
$\Hdvi{\elm}$ to $\Hdvi{\wK}$, and also from $\RTN_p(\elm)$ to $\RTN_p(\wK)$.
Moreover, we have the following key properties:
\bse\ba
&\Dv \tv = r_\elm \,\,\text{in $\elm$} \Longleftrightarrow \Dv(\piola(\tv)) = \wr_{\wK}\,\,\text{in $\wK$}, \\
&\tv\scp\tn_{\elm}|_\sd = r_\sd\,\,\text{on all $\sd\in\FKN$} \Longleftrightarrow
\piola(\tv)\scp \tn_{\wF} = \wr_{\wF}\,\,\text{on all $\wF\in\FwKN$},
\ea\ese
with $\elm=\tT(\wK)$ and $\sd=\tT(\wF)$. The first equivalence results from
$\Dv (\piola(\tv)) = \det(\Jac) (\Dv\tv)\circ \tT$ and the definition of $\wr_{\wK}$.
To prove the second equivalence, recalling~\eqref{eq:def_vn_K}, the left-hand side means that
\[
(\tv,\Gr\phi)_\elm + (\Dv\tv,\phi)_\elm = \sum_{\sd\in\FKN} (r_\sd,\phi)_\sd,
\]
for all $\phi\in H^1(K)$ such that $\phi|_\sd=0$ for all $\sd\in\FK\setminus\FKN$.
Changing variables in the volume and surface integrals, the above identity amounts to
\[
(\piola(\tv),\Gr\wphi)_{\wK} + (\Dv(\piola(\tv)),\wphi)_{\wK} = \sum_{\wF\in\FwKN} (\wr_{\wF},\wphi)_{\wF},
\]
where $\wphi = \phi \circ \tT$. Since the pullback by the geometric map $\tT$ is an isomorphism from $\{\phi\in H^1(K);\, \phi|_\sd=0\, \forall \sd\in\FK\setminus\FKN\}$ to $\{\wphi\in H^1(\wK);\, \wphi|_{\wF}=0\, \forall \wF\in\FwK\setminus\FwKN\}$, the above identity means that $\piola(\tv)\scp \tn_{\wF} = \wr_{\wF}$ on all $\wF\in\FwKN$.

Let now $\bxi_{p,\elm}$ and $\bxi_\elm$
be the unique minimizers in~\eqref{eq_Hdv_ext_simpl} using
the polynomial data $r$ and $r_\elm$; similarly, let
$\wbxi_{p,\wK}$ and $\wbxi_{\wK}$ be the unique minimizers for the
minimization problems posed on $\wK$ using the polynomial data $\wr$ and $\wr_{\wK}$. We
infer from Step~(3) that $\|\wbxi_{p,\wK}\|_{\wK} \le \wC \|\wbxi_{\wK}\|_{\wK}$
with constant $\wC$ of order unity.
Since $\piola^{-1}(\wbxi_{p,\wK})$ is in the minimization set defining
$\bxi_{p,\elm}$ and since $\piola(\bxi_\elm)$ is in that of $\wbxi_{\wK}$, we have
\ban
\|\bxi_{p,\elm}\|_\elm & \le \|\piola^{-1}(\wbxi_{p,\wK})\|_\elm
\le \|\piola^{-1}\|_{\calL(L^2,L^2)}\|\wbxi_{p,\wK}\|_{\wK}
\le \|\piola^{-1}\|_{\calL(L^2,L^2)} \wC\|\wbxi_{\wK}\|_{\wK} \\
&\le \|\piola^{-1}\|_{\calL(L^2,L^2)} \wC \|\piola(\bxi_\elm)\|_{\wK}
\le \|\piola\|_{\calL(L^2,L^2)}\|\piola^{-1}\|_{\calL(L^2,L^2)} \wC \|\bxi_\elm\|_\elm,
\ean
where $\|\piola\|_{\calL(L^2,L^2)}$ and $\|\piola^{-1}\|_{\calL(L^2,L^2)}$
are the operator norms of $\piola$ and $\piola^{-1}$ as linear maps between
$\tL^2(\elm)$ and $\tL^2(\wK)$.
This completes the proof since the factor
$\|\piola\|_{\calL(L^2,L^2)}\|\piola^{-1}\|_{\calL(L^2,L^2)}$ is bounded by a
constant only depending on the shape-regularity parameter $\gamma_\elm$. \ep

\section{On cell enumeration and vertex coloring in patches}
\label{app_graphs}

We collect in this section some auxiliary results on cell enumeration and vertex coloring in simplicial patches, corresponding to the setting of an ``interior vertex'' as described in Section~\ref{sec_set}. For any cell $\elm\in\Ta$, its interior faces are collected in the set $\FKin \eq \FK\cap \Fain$. Let us first observe that any enumeration of the elements in the patch $\Ta$ in the form
$\elm_1,\ldots,\elm_{|\Ta|}$ induces a partition of each of the sets $\FKiin$, $1\le i\le |\Ta|$, into two disjoint subsets,
$\FKiin = \F_i^\sharp \cup \F_i^\flat$, where $\F_i^\sharp$ collects all the interior
faces of $\elm_i$ shared by an already enumerated cell $\elm_j$ with
$j<i$, \ie, $\F_i^\sharp \eq \{\sd\in\Fain,\, \sd=\partial \elm_i \cap \partial
\elm_j,\,j<i\}$, and $\F_i^\flat$ collects all the other interior faces of $\elm_i$, \ie, $\F_i^\flat \eq \{\sd\in\Fain,\, \sd=\partial \elm_i \cap \partial
\elm_j,\,j>i\}$. Note that $|\F_i^\flat|+|\F_i^\sharp|=3$. An immediate consequence of this
definition is that $\F_1^\flat=\F_{\elm_1}^{\mathrm{i}}$ and
$\F_1^\sharp=\emptyset$ on the first element, whereas
$\F_{|\Ta|}^\flat=\emptyset$ and
$\F_{|\Ta|}^\sharp=\F_{\elm_{|\Ta|}}^{\mathrm{i}}$ on the last element.

\begin{lemma}[Patch enumeration] \label{lem:int_patch_enum} Let $\Ta$ be an interior patch of tetrahedra as specified in Section~\ref{sec_set}. Then there exists an enumeration of the elements in the patch $\Ta$ so that:
\begin{enumerate}[(i)]

\item \label{en_en_edg} For all $1<i\le |\Ta|$, if there are at least two
faces in $\F_i^\sharp$, intersecting in an edge, then all the elements
sharing this edge come sooner in the enumeration, \ie, if
$|\F_i^\sharp|\ge2$ with $\sd^1,\sd^2\in \F_i^\sharp$, then
letting $\edg \eq \sd^1\cap \sd^2$, $\elm_j \in \T_\edg\setminus\{\elm_i\}$
implies that $j< i$.

\item \label{en_en_ver} For all $1<i<|\Ta|$, there are one or two neighbors of $\elm_i$ which have been already enumerated and correspondingly two or one neighbors of $\elm_i$ which have not been enumerated yet, \ie, $|\F_i^\sharp|\in\{1,2\}$ (so that $|\F_i^\flat|=3-|\F_i^\sharp|\in\{1,2\}$ as well), for all but the first and the last element. In particular, $\F_i^\sharp$ contains all the interior faces of $\elm_i$ (so that $\F_i^\flat$ is empty) if and only if $i=|\Ta|$.
\end{enumerate}
\end{lemma}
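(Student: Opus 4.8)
The plan is to recast the statement as a purely combinatorial property of a triangulated $2$-sphere and then invoke the theory of shellings. Since $\ver$ is an interior vertex, the patch domain $\oma$ is the cone from $\ver$ over its boundary, and $\pt\oma$ is an embedded closed surface enclosing $\ver$, hence homeomorphic to $S^2$; the external faces $\sd_\elm^{\mathrm{ext}}=\pt\elm\cap\pt\oma$ (the face of each $\elm\in\Ta$ opposite $\ver$) triangulate it. Denote the resulting simplicial $2$-sphere by $\calS$. I would record the dictionary: the cells $\elm\in\Ta$ correspond to the triangles of $\calS$, the interior faces $\sd\in\Fain$ (each containing $\ver$ and shared by two cells) to the edges of $\calS$, and the interior edges $\edg\in\Ea$ to the vertices of $\calS$. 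Under this correspondence each cell has exactly three interior faces (those meeting $\ver$), so $|\FKiin|=3$, the set $\F_i^\sharp$ is precisely the set of edges of the triangle $\elm_i$ shared with earlier triangles, and the rotational path $\Te$ around an interior edge $\edg$ is exactly the star of the corresponding vertex of $\calS$.

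Next I would produce the enumeration from a shelling of $\calS$. Any triangulation of $S^2$ has a $3$-connected planar $1$-skeleton, so by Steinitz's theorem $\calS$ is the boundary complex of a simplicial $3$-polytope $Q$, whose facets, edges, and vertices match the triangles, edges, and vertices of $\calS$. By the Bruggesser--Mani theorem (\cite[Chap.~8]{Zieg_poly_95}), $\partial Q$ admits a line shelling $\elm_1,\dots,\elm_{|\Ta|}$, which I transport to an enumeration of $\Ta$. Property~\eqref{en_en_ver} is then read off from the defining properties of a shelling of a $2$-sphere: for $i\ge2$ the attaching set $\F_i^\sharp$ is a nonempty connected union of edges of the triangle $\elm_i$, hence consists of one or two edges whenever it is a proper subset of $\pt\elm_i$, and equals all three edges exactly when the sphere closes up, which happens only at the final step. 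Thus $1\le|\F_i^\sharp|\le2$ for $1<i<|\Ta|$ while $|\F_{|\Ta|}^\sharp|=3$, and $\F_i^\sharp=\FKiin$ if and only if $i=|\Ta|$.

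For property~\eqref{en_en_edg} I would exploit the hereditary behaviour of line shellings. Fix an interior edge $\edg$, corresponding to a vertex $v$ of $\calS$, and use that a line shelling of $Q$ induces, by projecting the shelling line through $v$, a line shelling of the vertex figure $Q/v$ (\cite[Chap.~8]{Zieg_poly_95}); this vertex figure is a polygon whose edges are exactly the cells of $\Te$ taken in the induced order. It then suffices to analyse a shelling $e_1,\dots,e_m$ of a cycle: each $e_k$ with $k<m$ is attached along a single vertex, so one of its two cyclic neighbours must carry a larger index, whereas the closing edge $e_m$ has both cyclic neighbours earlier. Hence the only cell of $\Te$ whose two $\edg$-faces both lie in its $\F^\sharp$ is the last element of $\Te$ in the enumeration. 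Translating back: whenever $\sd^1,\sd^2\in\F_i^\sharp$ meet in $\edg$, the cell $\elm_i$ is the last element of $\Te$, so every $\elm_j\in\Te\setminus\{\elm_i\}$ satisfies $j<i$, which is exactly~\eqref{en_en_edg}; note that this also covers $i=|\Ta|$, where $\elm_{|\Ta|}$ is simultaneously last in each of its three stars.

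The main obstacle is property~\eqref{en_en_edg}: a generic shelling need not be monotone around a vertex, so the argument genuinely requires a shelling with good restriction behaviour, which is why I would invoke line shellings (through the polytopal realisation furnished by Steinitz) rather than an arbitrary one. The remaining points---verifying that $\calS$ is a genuine simplicial $2$-sphere, that $3$-connectivity holds so Steinitz applies, and the bookkeeping of the dictionary between $\Ta$ and $\calS$---are routine.
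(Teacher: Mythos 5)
Your proposal is correct, and at its core it is the same argument as the paper's: identify the cells of $\Ta$ with the triangles of the simplicial $2$-sphere formed by the boundary faces on $\pt\oma$ (interior faces becoming edges, interior edges becoming vertices), and obtain the enumeration from a Bruggesser--Mani shelling of that sphere. Two differences are worth noting. (a) You pass through Steinitz's theorem to realize the sphere as the boundary complex of a convex simplicial $3$-polytope before invoking Bruggesser--Mani. The paper skips this and applies the rocket flight directly to the patch boundary, which is only star-shaped with respect to $\ver$, not necessarily convex; your detour therefore legitimizes a step the paper leaves implicit (an alternative is to launch the rocket from $\ver$ itself, where star-shapedness guarantees each supporting plane is crossed exactly once). (b) For property (i) you invoke a restriction property of line shellings --- that the induced order on the star of a vertex $v$ is a line shelling of the vertex figure $Q/v$ --- and then analyze shellings of a cycle. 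This fact is true (it follows by polarity: a line shelling of $Q$ orders the facets by a generic linear functional on the vertices of the polar $Q^*$, and restricting that functional to the facet of $Q^*$ dual to $v$ yields a line shelling of its polar, which is $Q/v$), but it is not stated in this form in Ziegler's Chapter~8, so it would need its own proof rather than a citation. More importantly, it is not actually needed: your stated reason for requiring line shellings (that ``a generic shelling need not be monotone around a vertex'') is mistaken. The formulation of shellability used in the paper --- each proper partial union $\cup_{j\le i}\elm_j$ meets $\pt\oma$ in a set whose boundary is connected with every vertex of degree two, i.e., the partial unions are disks with embedded boundary cycles --- already yields (i) for an \emph{arbitrary} shelling: if $\elm_i$ attaches along two faces meeting at the interior edge $\edg$ (two sphere-edges meeting at the sphere-vertex $v$), those two edges are necessarily the only two boundary edges at $v$ of the previous union, so after attaching $\elm_i$ the vertex $v$ becomes interior to the union, which forces every element of $\Te$ to carry an index at most $i$. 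So the paper's route gets (i) and (ii) directly from the shelling property, while yours buys a self-contained reduction to the one-dimensional cycle case at the price of a stronger, and more delicate, property of line shellings.
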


\begin{proof}
The key notion to assert the existence of the enumeration with the
requested properties is the shelling of a polytopal complex. Let us first
explain the concepts of polytopal complex and of shelling in the present context; we refer the reader to \cite[Definition~8.1]{Zieg_poly_95} for a more abstract presentation. The collection of the boundary faces, edges, and vertices of the patch constitutes a so-called pure, two-dimensional, polytopal complex, that is, a finite, nonempty collection of simplices (triangles, segments, and points, all lying on the boundary $\partial\oma$) that contains all the faces of its simplices (that is, the lower-dimensional simplices composing the boundary of each simplex) and such that the intersection of two distinct simplices in the complex is either empty or an edge or a vertex for each of them. The shellability of the polytopal complex (composed of the boundary faces, edges, and vertices) means that there exists an enumeration of the boundary faces (or, equivalently, the cells composing the patch) so that, for all $1\le i< |\Ta|$, the boundary of the set $\left(\cup_{j\le i} \elm_j\right) \cap \pt\oma$ is connected and contains only vertices of degree two (\ie, each vertex is connected by an edge to exactly two other vertices also belonging to this boundary).
The fact that this polytopal complex is shellable results from a theorem by Burggesser and Mani \cite{Burg_Mani_71}, see also \cite[Theorem~8.12]{Zieg_poly_95}. The main idea for the construction of the shelling is to imagine the complex as a ``little'' polyhedral planet, and launch a rocket from the interior of one of its faces; the rocket trajectory is a line that is supposed to intersect all the planes supporting the faces at one and only one point. The faces are enumerated by counting the launching face first, followed by the faces as they become visible from the rocket as it progresses to infinity. Once the rocket reaches infinity, it starts its travel back from minus infinity towards the complex; then all the previously hidden faces become visible and are enumerated as they disappear from the horizon. When the rocket reaches back the complex, all the faces have been enumerated, and the resulting enumeration produces a shelling with the desirable properties. We refer the reader to~\cite[pp.~240--243]{Zieg_poly_95} for details and illustrations.
\end{proof}

\begin{lemma}[Two-color refinement around edges]\label{lem:2color}
Fix a cell $\elm_*\in\Ta$ and an edge $\edg$ of $\elm_*$ having $\ver$ as one endpoint. Recall that $\T_\edg$ collects all the cells in $\Ta$ having $\edg$ as edge. Then there exists a conforming refinement $\T_\edg'$ of $\T_\edg$ composed of tetrahedra such
that
\begin{enumerate}[(i)]
\item $\T_\edg'$ contains $\elm_*$;
\item All the tetrahedra in $\T_\edg'$ have $\edg$ as edge, and their two other
vertices lie on $\pt\oma$;
\item The shape regularity parameter $\gamma_{\T_\edg'}$ is at most twice $\gamma_{\T_\edg}$;
\item Collecting all the vertices of $\T_\edg'$ that are not endpoints of $\edg$ in the set $\V_\edg'$, there is a two-color map $\mathtt{col}: \V_\edg' \to \{1,2\}$ so that for all $\kappa\in \T_\edg'$, the two vertices of $\kappa$ that are not endpoints of $\edg$, say $\{\ver_\kappa^n\}_{1\leq n\leq 2}$, satisfy
$\mathtt{col}(\ver_\kappa^n)=n$.
\end{enumerate}
\end{lemma}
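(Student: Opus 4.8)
The plan is to reduce property (iii) to a proper two-coloring of a cycle graph, by first exposing the ``fan'' structure of $\T_\edg$ around $\edg$. Writing $\edg=[\ver,\tb]$ with $\tb$ the endpoint of $\edg$ distinct from $\ver$, every cell $\kappa\in\T_\edg$ contains $\ver$ and $\tb$ together with two further vertices. Since $\ver$ is an interior vertex of $\Ta$, the face of each tetrahedron of $\Ta$ opposite $\ver$ is shared with no other cell of the patch (a neighbour across it would not contain $\ver$), hence lies on $\pt\oma$; in particular $\tb$ and both remaining vertices of each $\kappa$ lie on $\pt\oma$. Because $\oma$ contains an open ball around $\ver$, the link of $\edg$ in the patch is a simple closed cycle rather than an arc, so I can enumerate $\T_\edg=\{\kappa_1,\dots,\kappa_m\}$ with $m\eq|\T_\edg|\ge 3$ and $\kappa_k=[\ver,\tb,\tw_{k-1},\tw_k]$, where the non-$\edg$ vertices $\tw_0,\dots,\tw_{m-1}$ (indices mod $m$) lie on $\pt\oma$ and form a cycle, consecutive cells $\kappa_k,\kappa_{k+1}$ sharing the interior face $[\ver,\tb,\tw_k]\in\Fain$. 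Combinatorially $\T_\edg$ is thus the cycle graph $C_m$, each cell corresponding to one edge $\{\tw_{k-1},\tw_k\}$.

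With this picture, property (iii) is exactly a proper two-coloring of $C_m$, which exists iff $m$ is even, so the argument splits into two cases. If $m$ is even, I would take $\T_\edg'\eq\T_\edg$ and define $\mathtt{col}$ by alternating $1,2,1,2,\dots$ around the cycle. Properties (i) and (ii) are then immediate, and (iii) holds because each cell corresponds to an edge of $C_m$ joining two oppositely colored vertices.

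If $m$ is odd, a plain two-coloring is obstructed, and this is where the single conforming refinement enters. Since $m\ge 3$, I can select a cell $\kappa_k\ne\elm_*$ and subdivide it by inserting the midpoint $\tm$ of its outer edge $[\tw_{k-1},\tw_k]$, replacing $\kappa_k$ by the two tetrahedra $[\ver,\tb,\tw_{k-1},\tm]$ and $[\ver,\tb,\tm,\tw_k]$. The point $\tm$ lies on $\pt\oma$ because $[\tw_{k-1},\tw_k]$ is an edge of the boundary face $[\tb,\tw_{k-1},\tw_k]\subset\pt\oma$, and the subdivision leaves untouched the two faces $[\ver,\tb,\tw_{k-1}]$ and $[\ver,\tb,\tw_k]$ shared with $\kappa_{k-1}$ and $\kappa_{k+1}$; hence the resulting $\T_\edg'$ is a conforming refinement of $\T_\edg$ that still contains $\elm_*$ and satisfies (i) and (ii). Its cycle now has even length $m+1$, so I would two-color it exactly as before, assigning a color to the new vertex $\tm\in\V_\edg'$ in the process.

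The remaining verifications are routine once the fan structure is fixed. The one genuinely delicate point I expect is the topological input at the start: using that $\ver$ is interior so that the link of $\edg$ is a $1$-sphere, which is precisely what guarantees that the cells of $\T_\edg$ form a single closed cycle with all non-$\edg$ vertices on $\pt\oma$, that $m\ge 3$, and therefore that there is room to subdivide a cell other than $\elm_*$. The conformity of the refinement and the realization of (iii) by the alternating coloring are then straightforward consequences.
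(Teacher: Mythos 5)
Your proposal is correct and follows essentially the same route as the paper's proof: keep $\T_\edg$ with an alternating coloring when $|\T_\edg|$ is even, and when it is odd, bisect one cell of $\T_\edg\setminus\{\elm_*\}$ through the plane containing $\edg$ and the midpoint of the opposite edge (your midpoint construction is exactly the paper's ``median plane containing $\edg$''). The extra details you supply — the cycle structure of the link of $\edg$, why the non-$\edg$ vertices lie on $\pt\oma$, and conformity of the refinement — are points the paper leaves implicit, and they check out.
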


\bp If $|\T_\edg|$ is even, we can just take $\T_\edg'=\T_\edg$ since the vertices of
$\T_\edg$ that are not endpoints of $\edg$ then form a cycle with an even number of
vertices that can be colored using alternating colors. If $|\T_\edg|$ is odd, we
pick one tetrahedron in $\T_\edg\setminus\{\elm_*\}$ and subdivide it into two
sub-tetrahedra by cutting it along the median plane containing $\edg$. By doing
so, we obtain a conforming simplicial refinement $\T_\edg'$ of $\T_\edg$ that has
all the desired properties. \ep

\begin{lemma}[Three-color patch refinement]\label{lem:3color}
Fix a cell $\elm_*\in\Ta$. There exists a conforming refinement $\T_\ver'$ of
$\Ta$ composed of tetrahedra such that
\begin{enumerate}[(i)]
\item $\T_\ver'$ contains $\elm_*$;
\item All the tetrahedra in $\T_\ver'$ have $\ver$ as vertex, and their
three other vertices lie on $\pt\oma$;
\item The shape regularity parameter $\gamma_{\T_\ver'}$ is at most a fixed multiple of $\gamma_{\T_\ver}$;
\item Collecting all the vertices of $\T_\ver'$ distinct from $\ver$ in the
 set $\V_\ver'$, there is a three-color map $\mathtt{col} : \V_\ver' \to
 \{1,2,3\}$ so that for all $\kappa\in \T_\ver'$, the three vertices of
 $\kappa$ distinct from $\ver$, say $\{\ver_\kappa^n\}_{1\leq n\leq 3}$,
 satisfy $\mathtt{col}(\ver_\kappa^n)=n$.
\end{enumerate}
\end{lemma}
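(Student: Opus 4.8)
\emph{Reduction to the boundary sphere.} Since $\oma$ contains an open ball around $\ver$, the boundary $\pt\oma$ is a topological $2$-sphere, triangulated by the faces in $\Fab$ (each being the face of some $\elm\in\Ta$ opposite to $\ver$); I denote this triangulated sphere by $\Sigma$ and let $T_*\in\Sigma$ be the face of $\elm_*$. Because every cell of $\Ta$ is the cone of its $\Fab$-face from $\ver$ and $\oma$ is star-shaped with respect to $\ver$, a conforming simplicial refinement $\Sigma'$ of $\Sigma$ corresponds bijectively, by coning from $\ver$, to a conforming refinement $\T_\ver'$ of $\Ta$ satisfying (i)--(ii): (i) amounts to keeping $T_*$ as a face of $\Sigma'$, and (ii) holds since every vertex inserted inside a face or on an edge of $\Sigma$ still lies on $\pt\oma$. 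Under this correspondence, requirement (iii) is exactly that $\Sigma'$ admit a proper vertex $3$-coloring; indeed, the three vertices of each face of $\Sigma'$ are mutually adjacent, so any proper $3$-coloring assigns them the three distinct colors $1,2,3$ and is automatically ``rainbow''. I would then invoke the classical fact that a triangulation of the sphere admits a proper $3$-coloring if and only if every vertex has even degree (it is Eulerian). The lemma thus reduces to producing a conforming simplicial refinement $\Sigma'$ of $\Sigma$ that keeps the face $T_*$ and in which every vertex has even degree.

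\emph{Localizing the parity defect.} The barycentric subdivision of $\Sigma$ is Eulerian, but it subdivides $T_*$, and its dimension-coloring colours all original vertices alike, hence is incompatible with keeping the three-coloured face $T_*$. Instead I would use the \emph{relative} barycentric subdivision that protects $T_*$: insert a barycenter in every face except $T_*$ and a midpoint on every edge except the three edges of $T_*$, then triangulate compatibly (a full $1$-to-$6$ split of the faces away from $T_*$, and a compatible $1$-to-$5$ split of the three faces neighbouring $T_*$ that leaves their $T_*$-edge intact). A short degree count then shows that in this refinement every vertex has even degree \emph{except} for a bounded, explicitly located set: the three corners of $T_*$ (degree $2d-1$ if $d$ is their degree in $\Sigma$) and the three barycenters of the faces of $\Sigma$ adjacent to $T_*$ (degree $5$). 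Thus there are exactly six odd-degree vertices, all sitting in the first ring around $T_*$.

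\emph{Correcting the parities, and the main obstacle.} It remains to repair these finitely many odd vertices without touching the protected edges of $T_*$. The basic move is the \emph{edge bisection} of an interior edge $e=xy$: insert its midpoint and join it to the two apexes of the two faces sharing $e$. This move keeps the mesh conforming and simplicial, adds one vertex of even degree $4$, leaves $x,y$ untouched, and flips the parity of exactly the two apexes. Composing such moves along a path makes all intermediate flips cancel, so one may flip the parities of any pair of vertices joined in the auxiliary ``apex-adjacency'' graph $H$, whose edges link two vertices that are the apexes of two faces sharing an edge other than an edge of $T_*$. Pairing the six odd vertices and annihilating each pair by a path of moves in $H$ then yields an Eulerian $\Sigma'$. \textbf{The main obstacle is precisely this routing step}: one must show that the six odd vertices can be matched within connected components of $H$ using only bisections of edges disjoint from $\partial T_*$, equivalently that these moves generate over $\mathbb{F}_2$ every even-weight parity vector supported away from $T_*$. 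This connectivity/generation statement on the refined sphere is the genuinely combinatorial heart of the argument (and the point where care is needed). Once it is in place, $\Sigma'$ is Eulerian, hence properly $3$-colorable; coning the colouring back from $\ver$ furnishes the map $\mathtt{col}:\V_\ver'\to\{1,2,3\}$ and the labelling $\ver_\kappa^n$ required in (iii), which completes the proof.
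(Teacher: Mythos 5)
Your first half coincides with the paper's own proof. The reduction to the boundary surface by coning from $\ver$ (the paper flattens $\pt\oma$ minus the face of $\elm_*$ onto the unit triangle via a homeomorphism and cites Tsai--West for three-colorability of Eulerian triangulations, while you stay on the sphere and invoke the same classical criterion; the two views are equivalent), the protected barycentric subdivision that inserts no vertex on the three edges of $T_*$, and the count of exactly six odd-degree vertices --- the three corners of $T_*$ and the three barycenters of the faces sharing an edge with $T_*$ --- are precisely Step (1) of the paper's argument.

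The genuine gap is the parity-repair step, and you flag it yourself: you never prove that the six odd vertices can be matched by edge-bisection paths avoiding $\partial T_*$, i.e.\ the connectivity/generation property of your auxiliary graph $H$; the proposal ends with ``once it is in place''. That is exactly where the remaining content of the lemma lies, and the paper resolves it not by an abstract connectivity theorem but by explicit moves exploiting the concrete geometry. First it uses a move absent from your toolkit: inserting the barycenter $\tz_2''$ of the triangle with vertices $\tz_1,\tz_2',\tz_3$ (this triangle survives the protected subdivision precisely because the edge $\tz_1\tz_3$ of $T_*$ is unrefined and $\tz_2'$ is the barycenter of the neighboring face), a \emph{triple} parity flip that kills three odd vertices at the cost of one new odd vertex of degree three. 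The remaining four odd vertices ($\tz_2,\tz_2'',\tz_1',\tz_3'$) are then paired off by two explicit chains of edge bisections: alternate spokes around the link of $\tz_1$, joining $\tz_2''$ to $\tz_2$, and a similar chain joining $\tz_1'$ to $\tz_3'$. Note also that your ``compose bisections along a path'' argument needs more care than stated: a bisection destroys the two faces it uses, so a later move in the path that would reuse one of those faces no longer flips the intended apex (the relevant apex has become the newly inserted midpoint); the paper's explicit fans use pairwise distinct faces, so no such interference occurs. To complete your proof you must either exhibit such explicit paths or prove the $\mathbb{F}_2$-generation claim for $H$ restricted to edges away from $\partial T_*$; as written, the proposal establishes the reduction but not the lemma.
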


\bp Since all the cells in $\Ta$ and in $\T_\ver'$ have $\ver$ as vertex and
their three other vertices lie on $\pt\oma$, we will reason on the trace of
$\Ta$ on $\pt\oma$. Using a homeomorphism, we can map $\cup_{\elm'\in
\Ta\setminus\{\elm_*\}} \{\elm'\cap \pt\oma\}$ to an interior triangulation,
say $\mathfrak{T}$, of the unit triangle $T$ in $\RR^2$ with the
particularity that the three sides of $T$ are edges of cells in
$\mathfrak{T}$ (these three triangular cells are the images by the above
homeomorphism of the trace on $\pt\oma$ of the three tetrahedra sharing a
face with $\elm_*$); see Figure~\ref{fig:patch}.
\begin{figure}[htb]
\begin{center}
\includegraphics[width=0.7\textwidth]{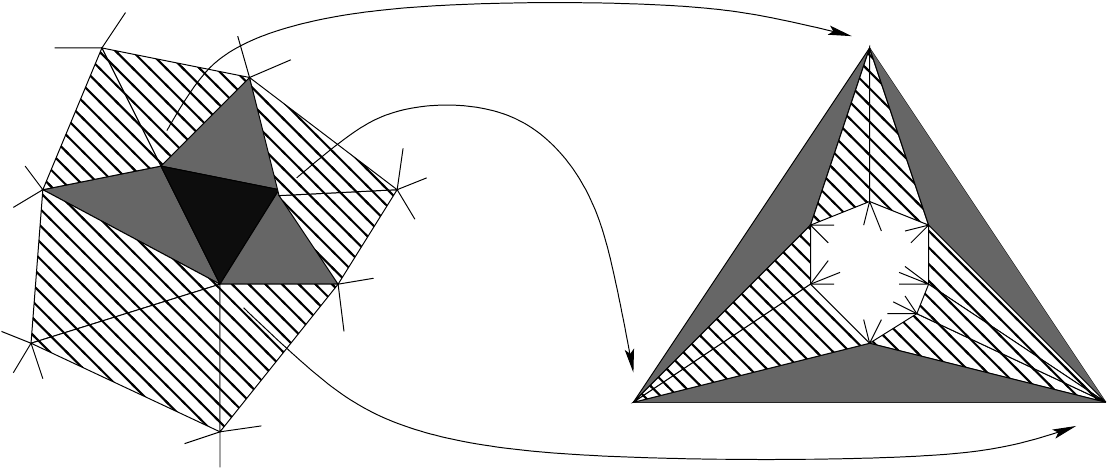}
\end{center}
\caption{Left: original patch $\Ta\cap\pt\oma$ locally around $\elm_*\cap\pt\oma$
(highlighted in dark grey), the three triangles $\elm'\cap\pt\oma$ for which the
tetrahedron $\elm'$ shares an interior face with $\elm$ are highlighted in light
grey, and the triangles $\elm'\cap\pt\oma$ for which the tetrahedron $\elm'$
shares only an interior edge with $\elm$ are dashed; Right: mapping by the
homeomorphism to a triangulation of the unit triangle $T$ in $\RR^2$;
the polygon at the heart of $T$ is the image by the homeomorphism of
all the triangles $\elm'\cap\pt\oma$ where $\elm'$ only shares the vertex $\ver$ with $\elm_*$.}
\label{fig:patch}
\end{figure}
We now devise a conforming triangular refinement of $\mathfrak{T}$ that
does not refine the three sides of $T$ and such that all the vertices in this
refinement are connected to an even number of other vertices (the number of
connections is called the degree of the vertex). The existence of a
three-coloring map on the vertices of this refinement will then follow
from~\cite{Tsai_West_11}. To this purpose, we proceed in several steps. Let
us call $\{\tz_1,\tz_2,\tz_3\}$ the three vertices of $T$. The three
triangles in $\mathfrak{T}$ supported on the three edges of $T$ are denoted by $\{\tau_1,\tau_2,\tau_3\}$ in such a way that $\tau_n$ does not touch the
vertex $\tz_n$, for all $n=1,2,3$. Let $\tz'_n$ denote the barycenter of
$\tau_n$.
\\
(1) We subdivide all the triangles in $\mathfrak{T}$ by barycentric
subdivision into six sub-triangles. By doing so, we create new vertices,
namely the barycenter of each triangle in $\mathfrak{T}$ (with degree 6), and
the midpoint of each edge in $\mathfrak{T}$ (with degree 4). Moreover, all
the original vertices of $\mathfrak{T}$ have now even degree, except for
$\{\tz_1,\tz_2,\tz_3\}$ which have odd degree. To avoid refining the three
edges of $T$, we remove for all $n\in\{1,2,3\}$ the connection between the
barycenter $\tz_n'$ and the midpoint of the edge of $T$ supporting the
triangle $\tau_n$. By doing so, the degree of the three barycenters
$\{\tz_1',\tz_2',\tz_3'\}$ changes from six to five. At this stage, we have a
conforming, triangular refinement preserving the three sides of $T$, but
which contains six vertices of odd degree, namely $\{\tz_1,\tz_2,\tz_3\}$ and
$\{\tz_1',\tz_2',\tz_3'\}$.
\\
(2) We subdivide the triangle with vertices $\{\tz_1,\tz_2',\tz_3\}$ into
three triangles by joining its barycenter, say $\tz_2''$, to the three
vertices. The degree of $\tz_1$, $\tz_2'$, and $\tz_3$ is now even as
desired, but we have created the new vertex $\tz_2''$ with degree three. This
new triangulation is illustrated in the central panel of
Figure~\ref{fig:octo} in a slightly simplified setting with respect to
Figure~\ref{fig:patch} since we have reduced to one the number of original
dashed triangles at each of the three vertices of $T$ (see the left panel of
Figure~\ref{fig:octo} for the original triangulation).
\\
(3) We now subdivide all the triangles having $\tz_1$ as vertex, except the
newly created one on the boundary of $T$, into two sub-triangles as depicted in the right panel of
Figure~\ref{fig:octo}. The vertices $\tz_2''$ and $\tz_2$ now have even
degree, and we have created additional vertices at some edge midpoints that
all have degree four while we have also increased by two the degree of some
vertices.
\\
(4) Finally, we use a similar process, as depicted also in the right panel of
Figure~\ref{fig:octo}, so that the vertices $\tz_1'$ and $\tz_3'$ now have
even degree, while we create additional vertices at some edge midpoints that
all have degree four. We now have a triangulation where all the vertices have
even degree. \ep
\begin{figure}[htb]
\begin{center}
\scalebox{0.64}{\input{octo.pspdftex}}
\end{center}
\caption{Left: original triangulation in the simple case where there is only one dashed triangle at each of the three vertices $\{\tz_1,\tz_2,\tz_3\}$; Center: refined triangulation at the end of Step (2) showing the barycenters  $\{\tz_1',\tz_2',\tz_3'\}$ and the newly created one $\tz_2''$; Right: final refined triangulation where now all the vertices have even degree.}
\label{fig:octo}
\end{figure}

\bibliographystyle{siam}
\bibliography{biblio}

\def\polhk#1{\setbox0=\hbox{#1}{\ooalign{\hidewidth
  \lower1.5ex\hbox{`}\hidewidth\crcr\unhbox0}}}
  \def\polhk#1{\setbox0=\hbox{#1}{\ooalign{\hidewidth
  \lower1.5ex\hbox{`}\hidewidth\crcr\unhbox0}}} \def\cprime{$'$}
\begin{thebibliography}{10}

\bibitem{Brae_Pill_Sch_p_rob_09}
{\sc D.~Braess, V.~Pillwein, and J.~Sch{\"o}berl}, {\em Equilibrated residual
  error estimates are {$p$}-robust}, Comput. Methods Appl. Mech. Engrg., 198
  (2009), pp.~1189--1197.

\bibitem{Braess_Scho_a_post_edge_08}
{\sc D.~Braess and J.~Sch{\"o}berl}, {\em Equilibrated residual error estimator
  for edge elements}, Math. Comp., 77 (2008), pp.~651--672.

\bibitem{Brenner_Poin_disc_03}
{\sc S.~C. Brenner}, {\em Poincar\'e-{F}riedrichs inequalities for piecewise
  {$H^1$} functions}, SIAM J. Numer. Anal., 41 (2003), pp.~306--324.

\bibitem{Burg_Mani_71}
{\sc H.~Bruggesser and P.~Mani}, {\em Shellable decompositions of cells and
  spheres}, Math. Scand., 29 (1971), pp.~197--205 (1972).

\bibitem{Canc_Dus_Mad_Stam_Voh_eigs_conf_17}
{\sc E.~Canc\`es, G.~Dusson, Y.~Maday, B.~Stamm, and M.~Vohral{\'{\i}}k}, {\em
  Guaranteed and robust a posteriori bounds for {L}aplace eigenvalues and
  eigenvectors: conforming approximations}, SIAM J. Numer. Anal., 55 (2017),
  pp.~2228--2254.

\bibitem{Canc_Dus_Mad_Stam_Voh_eigs_nonconf_18}
{\sc E.~Canc{\`e}s, G.~Dusson, Y.~Maday, B.~Stamm, and M.~Vohral{\'{\i}}k},
  {\em Guaranteed and robust a posteriori bounds for {L}aplace eigenvalues and
  eigenvectors: a unified framework}, Numer. Math., 140 (2018), pp.~1033--1079.

\bibitem{Cars_Merd_survey_NC_13}
{\sc C.~Carstensen and C.~Merdon}, {\em Computational survey on a posteriori
  error estimators for nonconforming finite element methods for the {P}oisson
  problem}, J. Comput. Appl. Math., 249 (2013), pp.~74--94.

\bibitem{Cer_Hech_Tang_Voh_ad_inex_S_18}
{\sc M.~{\v{C}}erm{\'{a}}k, F.~Hecht, Z.~Tang, and M.~Vohral{\'{\i}}k}, {\em
  Adaptive inexact iterative algorithms based on polynomial-degree-robust a
  posteriori estimates for the {S}tokes problem}, Numer. Math., 138 (2018),
  pp.~1027--1065.

\bibitem{Ciar_Voh_chan_coef_18}
{\sc P.~Ciarlet, Jr. and M.~Vohral\'{\i}k}, {\em Localization of global norms
  and robust a posteriori error control for transmission problems with
  sign-changing coefficients}, ESAIM Math. Model. Numer. Anal., 52 (2018),
  pp.~2037--2064.

\bibitem{Cost_McInt_Bog_Poinc_10}
{\sc M.~Costabel and A.~McIntosh}, {\em On {B}ogovski\u\i\ and regularized
  {P}oincar\'e integral operators for de {R}ham complexes on {L}ipschitz
  domains}, Math. Z., 265 (2010), pp.~297--320.

\bibitem{Demk_Gop_Sch_ext_I_09}
{\sc L.~Demkowicz, J.~Gopalakrishnan, and J.~Sch{\"o}berl}, {\em Polynomial
  extension operators. {P}art {I}}, SIAM J. Numer. Anal., 46 (2008),
  pp.~3006--3031.

\bibitem{Demk_Gop_Sch_ext_II_09}
\leavevmode\vrule height 2pt depth -1.6pt width 23pt, {\em Polynomial extension
  operators. {P}art {II}}, SIAM J. Numer. Anal., 47 (2009), pp.~3293--3324.

\bibitem{Demk_Gop_Sch_ext_III_12}
\leavevmode\vrule height 2pt depth -1.6pt width 23pt, {\em Polynomial extension
  operators. {P}art {III}}, Math. Comp., 81 (2012), pp.~1289--1326.

\bibitem{Dest_Met_expl_err_CFE_99}
{\sc P.~Destuynder and B.~M{\'e}tivet}, {\em Explicit error bounds in a
  conforming finite element method}, Math. Comp., 68 (1999), pp.~1379--1396.

\bibitem{Di_Pietr_Ern_book_12}
{\sc D.~A. Di~Pietro and A.~Ern}, {\em Mathematical aspects of discontinuous
  {G}alerkin methods}, vol.~69 of Math\'ematiques \& Applications (Berlin)
  [Mathematics \& Applications], Springer, Heidelberg, 2012.

\bibitem{Dol_Ern_Voh_hp_16}
{\sc V.~Dolej{\v{s}}{\'i}, A.~Ern, and M.~Vohral{\'i}k}, {\em {$hp$}-adaptation
  driven by polynomial-degree-robust a posteriori error estimates for elliptic
  problems}, SIAM J. Sci. Comput., 38 (2016), pp.~A3220--A3246.

\bibitem{Dors_Mel_p_rob_elast_13}
{\sc P.~D{\"o}rsek and J.~M. Melenk}, {\em Symmetry-free, {$p$}-robust
  equilibrated error indication for the {$hp$}-version of the {FEM} in nearly
  incompressible linear elasticity}, Comput. Methods Appl. Math., 13 (2013),
  pp.~291--304.

\bibitem{Ern_Smears_Voh_H-1_lift_17}
{\sc A.~Ern, I.~Smears, and M.~Vohral{\'{\i}}k}, {\em Discrete {$p$}-robust
  {$\bm{H}(\mathrm{div})$}-liftings and a posteriori estimates for elliptic
  problems with {$H^{-1}$} source terms}, Calcolo, 54 (2017), pp.~1009--1025.

\bibitem{Ern_Sme_Voh_heat_HO_Y_17}
\leavevmode\vrule height 2pt depth -1.6pt width 23pt, {\em Guaranteed, locally
  space-time efficient, and polynomial-degree robust a posteriori error
  estimates for high-order discretizations of parabolic problems}, SIAM J.
  Numer. Anal., 55 (2017), pp.~2811--2834.

\bibitem{Ern_Sme_Voh_heat_HO_X_19}
\leavevmode\vrule height 2pt depth -1.6pt width 23pt, {\em Equilibrated flux a
  posteriori error estimates in {$L^2(H^1)$}-norms for high-order
  discretizations of parabolic problems}, IMA J. Numer. Anal., 39 (2019),
  pp.~1158--1179.

\bibitem{Ern_Voh_p_rob_15}
{\sc A.~Ern and M.~Vohral{\'{\i}}k}, {\em Polynomial-degree-robust a posteriori
  estimates in a unified setting for conforming, nonconforming, discontinuous
  {G}alerkin, and mixed discretizations}, SIAM J. Numer. Anal., 53 (2015),
  pp.~1058--1081.

\bibitem{Tsai_West_11}
{\sc M.-T. Tsai and D.~B. West}, {\em A new proof of 3-colorability of
  {E}ulerian triangulations}, Ars Math. Contemp., 4 (2011), pp.~73--77.

\bibitem{Voh_Poinc_disc_05}
{\sc M.~Vohral{\'{\i}}k}, {\em On the discrete {P}oincar\'e--{F}riedrichs
  inequalities for nonconforming approximations of the {S}obolev space
  {$H^1$}}, Numer. Funct. Anal. Optim., 26 (2005), pp.~925--952.

\bibitem{Zieg_poly_95}
{\sc G.~M. Ziegler}, {\em Lectures on polytopes}, vol.~152 of Graduate Texts in
  Mathematics, Springer-Verlag, New York, 1995.

\end{thebibliography}

\end{document}